\documentclass{amsart}
\usepackage{amsmath,amssymb,amsthm}
\usepackage[msc-links,abbrev]{amsrefs}
\usepackage{mathtools}
\usepackage{mathrsfs}
\usepackage{tikz-cd}

\DeclareMathOperator{\Pf}{Pf}
\DeclareMathOperator{\Ric}{Ric}
\DeclareMathOperator{\Rm}{Rm}
\DeclareMathOperator{\cRm}{\widetilde{Rm}}
\DeclareMathOperator{\tr}{tr}

\DeclareMathOperator{\dvol}{dV}
\DeclareMathOperator{\darea}{dA}

\DeclareMathOperator{\odivsymbol}{\overline{div}}
\DeclareMathOperator{\Id}{Id}
\DeclareMathOperator{\Area}{Area}

\newcommand{\bR}{\mathbb{R}}
\newcommand{\mE}{\mathcal{E}}
\newcommand{\mG}{\mathcal{G}}
\newcommand{\mN}{\mathcal{N}}
\newcommand{\mS}{\mathcal{S}}
\newcommand{\cmE}{\widetilde{\mathcal{E}}}
\newcommand{\cmG}{\widetilde{\mathcal{G}}}
\newcommand{\cmP}{\widetilde{\mathcal{P}}}
\newcommand{\cmS}{\widetilde{\mathcal{S}}}
\newcommand{\cg}{\widetilde{g}}
\newcommand{\ch}{\widetilde{h}}
\newcommand{\cjmath}{\widetilde{\jmath}\,}
\newcommand{\cD}{\widetilde{D}}
\newcommand{\cH}{\widetilde{H}}
\newcommand{\cI}{\widetilde{I}\,}
\newcommand{\cM}{\widetilde{M}}

\newcommand{\cR}{\widetilde{R}}
\newcommand{\cT}{\widetilde{T}}
\newcommand{\cU}{\widetilde{U}}
\newcommand{\cV}{\widetilde{V}}
\newcommand{\cW}{\widetilde{W}}
\newcommand{\cX}{\widetilde{X}}
\newcommand{\cL}{\widetilde{L}}
\newcommand{\cdelta}{\widetilde{\delta}}
\newcommand{\cgamma}{\widetilde{\gamma}}
\newcommand{\crho}{\widetilde{\rho}}
\newcommand{\cnabla}{\widetilde{\nabla}}
\newcommand{\cDelta}{\widetilde{\Delta}}
\newcommand{\cGamma}{\widetilde{\Gamma}}
\newcommand{\cSigma}{\widetilde{\Sigma}}
\newcommand{\hg}{\widehat{g}}
\newcommand{\hj}{\widehat{\jmath}\,}

\newcommand{\hSigma}{\widehat{\Sigma}}
\newcommand{\hM}{\widehat{M}}
\newcommand{\kc}{\mathfrak{c}}
\newcommand{\rv}{\rvert}
\newcommand{\lv}{\lvert}
\newcommand{\onabla}{\overline{\nabla}}
\newcommand{\oRm}{\overline{\Rm}}
\newcommand{\Cphg}{C_{\mathrm{ph}}}
\newcommand{\mF}{\mathcal{F}}
\newcommand{\mFphg}{\mathcal{F}_{\mathrm{ph}}}

\newcommand{\defn}[1]{{\boldmath\bfseries#1}}

\newcommand{\suchthat}{\mathrel{}:\mathrel{}}

\def\sideremark#1{\ifvmode\leavevmode\fi\vadjust{\vbox to0pt{\vss% the remark
 \hbox to 0pt{\hskip\hsize\hskip1em%                          will appear only
 \vbox{\hsize3cm\tiny\raggedright\pretolerance10000%          on the side
  \noindent #1\hfill}\hss}\vbox to8pt{\vfil}\vss}}}%
\newtheorem{theorem}{Theorem}[section]
\newtheorem{lemma}[theorem]{Lemma}
\newtheorem{proposition}[theorem]{Proposition}
\newtheorem{corollary}[theorem]{Corollary}

\theoremstyle{definition}
\newtheorem{definition}[theorem]{Definition}

\theoremstyle{remark}
\newtheorem{remark}[theorem]{Remark}

\numberwithin{equation}{section}

\newcommand{\intr}{\sideset{^R}{}\int}
\newcommand{\smallintr}{{}^R\!\!\int}
\newcommand{\intl}{\mathring{L}}

\begin{document}

\title{Local and global conformal invariants of submanifolds}
% \title[Computing renormalized extrinsic curvature integrals]{Renormalized curvature integrals on minimal submanifolds of conformally compact Einstein manifolds}
\author[J.\ S.\ Case]{Jeffrey S.\ Case}
\address{Department of Mathematics \\ Penn State University \\ University Park, PA 16802 \\ USA}
\email{jscase@psu.edu}
\author[A. Khaitan]{Ayush Khaitan}
\address{Department of Mathematics \\ Rutgers University \\ Hill Center for the Mathematical Sciences \\ 110 Frelinghuysen Rd. \\ Piscataway, NJ 08854 \\ USA}
\email{ayush.khaitan@rutgers.edu}
\author[Y.-J.\ Lin]{Yueh-Ju Lin}
\address{Department of Mathematics, Statistics, and Physics \\ Wichita State University \\ Wichita, KS 67260 \\ USA}
\email{yueh-ju.lin@wichita.edu}
\author[A.\ J.\ Tyrrell]{Aaron J.\ Tyrrell}
\address{Department of Mathematics \\ University of Notre Dame \\ Notre Dame, IN 46556 \\ USA}
\email{atyrrell@nd.edu}
\author[W.\ Yuan]{Wei Yuan}
\address{Department of Mathematics \\ Sun Yat-sen University \\ Guangzhou, Guangdong 510275 \\ China}
\email{yuanw9@mail.sysu.edu.cn}
\keywords{conformal submanifold invariants, extrinsic ambient space, renormalized area, renormalized curvature integral, Gauss--Bonnet--Chern}
\subjclass[2020]{Primary 53C40; Secondary 53A10, 53B25, 53C18, 53C24, 53C42}
\begin{abstract}
 We develop methods for constructing and computing conformal invariants of submanifolds, with a particular emphasis on conformal submanifold scalars and conformally invariant integrals of natural submanifold scalars.
 These methods include a direct construction of the extrinsic ambient space,
 a construction of global invariants of conformally compact minimal submanifolds of conformally compact Einstein manifolds via renormalized extrinsic curvature integrals,
 and the introduction of a large class of conformal submanifold scalars that are easily computed at minimal submanifolds of Einstein manifolds.
 As an application, we derive an explicit Gauss--Bonnet--Chern-type formula relating the renormalized area of a conformally compact $k$-dimensional minimal submanifold of a conformally compact Einstein manifold to its Euler characteristic and the integral of a conformal submanifold scalar of weight $-k$.
 As another application, we prove a rigidity result for conformally compact minimal submanifolds of conformally compact hyperbolic manifolds.
\end{abstract}
\maketitle

\section{Introduction}
\label{sec:intro}

Minimal submanifolds of Einstein manifolds, and especially of spaceforms, have long captured the attention of mathematicians, and conformal invariants of submanifolds have played an important role in their study.
The simplest conformal invariant is the trace-free part of the second fundamental form~\cite{Fialkow1944}.
Two applications of it, in the guise of a holomorphic quadratic differential, are the facts that a compact minimal surface in an Einstein three-manifold (a) is umbilic if it has genus zero~\cites{Almgren1966,Chern1983}, and (b) is either umbilic or has no umbilic points if it has genus one~\cite{Lawson1970}.
Another important conformal invariant is the Willmore energy~\cites{LiYau1982,MontielUrbano2002,Willmore1965,Weiner1978} $\int \bigl( \lambda + \lv H \rv^2 \bigr) \darea$ of a compact surface in an Einstein manifold $(M^n,g)$ with $\Ric = (n-1)\lambda g$.
This formula shows that minimal surfaces are critical points of the Willmore energy, which plays an essential role in the resolution of the Willmore Conjecture in dimension three~\cite{MarquesNeves2014}.
In higher dimensions, the renormalized area is an important global invariant of even-dimensional conformally compact minimal submanifolds of conformally compact Einstein manifolds~\cites{GrahamWitten1999,RyuTakayanagi2006a,RyuTakayanagi2006b}.
While difficult to compute in general, the renormalized area can be expressed as a linear combination of the Euler characteristic of the minimal submanifold and the convergent integral of a conformal submanifold scalar in dimension two~\cite{AM2010}, dimension four~\cites{CGKTW24,Tyrrell2023}, and under the assumption of a conjectural Alexakis-type decomposition (cf.\ \cites{Alexakis,CGKTW24,MondinoNguyen2018}) in higher dimensions~\cite{CGKTW24}.

In this paper we develop methods that systematically construct and compute local and global conformal invariants of submanifolds.
Our approach to constructing local invariants is via the extrinsic analogue of the (Fefferman--Graham) ambient space~\cite{FeffermanGraham2012} introduced by Case, Graham, and Kuo~\cite{CGK23};
our approach to constructing global invariants is via an extrinsic analogue of the renormalized curvature integrals of Albin~\cite{Albin2009};
and our approach to computing local and global invariants, which applies to minimal submanifolds of Einstein manifolds, generalizes work of Case, Khaitan, Lin, Tyrrell, and Yuan~\cite{CKLTY2024}.
We use these results to compute a large class of renormalized extrinsic curvature integrals on conformally compact minimal submanifolds of conformally compact Einstein manifolds.
In particular, we compute the renormalized area independent of an Alexakis-type decomposition, significantly improving a result of Case, Graham, Kuo, Tyrrell, and Waldron~\cite{CGKTW24}.
Our methods also lead to a rigidity result for minimal submanifolds of hyperbolic manifolds.

Our first main result is a direct construction of the extrinsic ambient space~ and its application to the construction of conformal submanifold scalars:

\begin{theorem}
    \label{thm:rough-ambient-space}
    Let $j \colon \Sigma^k \to (M^n,\kc)$, $k < n$ and $n \geq 3$, be a conformal submanifold and let $(\cmG,\cg)$ be an ambient space for $(M,\kc)$.
    There is a formally unique, formally minimal, dilation-equivariant submanifold $\cjmath \colon \cmS \to (\cmG,\cg)$ such that $(\cmS,\cjmath^\ast\cg)$ is a pre-ambient space for $(\Sigma,j^\ast\kc)$ and $\widetilde{\jmath}$ restricts to the tautological immersion of the metric bundle of $(\Sigma,j^\ast\kc)$ into that of $(M,\kc)$.
    Moreover, if $\widetilde{I}$ is a natural submanifold scalar of homogeneity $w \geq -k$ on $(k+2)$-submanifolds of $(n+2)$-manifolds, then $\cI$ descends to a conformal submanifold scalar $\iota^\ast\cI$ of weight $w$ on $k$-submanifolds of $n$-manifolds.
\end{theorem}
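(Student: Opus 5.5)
We plan to build $\cmS$ as a formal solution of the minimal surface equation in $(\cmG,\cg)$ with prescribed initial data, working in a normal form for the ambient metric. Fix a representative $g\in\kc$ and write the ambient space in normal form $\cmG=\bR_+\times M\times(-\epsilon,\epsilon)$ with
\[
\cg = 2\rho\,dt^2 + 2t\,dt\,d\rho + t^2 g_\rho ,
\]
where $g_\rho$ is the one-parameter family of metrics on $M$ determined formally by $\Ric(\cg)=0$. The metric bundle of $(M,\kc)$ is the hypersurface $\{\rho=0\}$. We look for a dilation-invariant submanifold of the form
\[
\cmS=\{(t,x,\rho) : x = u(y,\rho),\ y\in\Sigma\},
\]
with $u(\cdot,0)=j$. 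Dilation-equivariance allows us to take $u$ independent of $t$, and the initial condition gives exactly the tautological immersion of the metric bundles. Writing $u(y,\rho)=\exp_{j(y)}(\sum_{m\ge1}\rho^m U_m(y))$ with $U_m$ normal vector fields along $j$, we expand the mean curvature vector of $\cmS$ in powers of $\rho$.

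The key computation is that the normal component of $\cH$ at order $\rho^{m-1}$ has the form
\[
c_m(k)\, U_m + (\text{terms in } U_1,\dots,U_{m-1},\ \text{the second fundamental form, curvature of } g_\rho),
\]
with $c_m(k)$ proportional to $m-\frac{k}{2}$ up to a nonzero factor. This is the analogue of the Graham--Witten indicial computation: the $t$-direction and the $\rho\partial_\rho$ operator contribute $2m-k$ from the trace of $\rho\partial_\rho$ over the $k+2$ dimensions. Hence $U_m$ is uniquely determined inductively for $m<k/2$ when $k$ is even, and for all $m$ when $k$ is odd. If $n$ is even we also only know $g_\rho$ to order $n/2$. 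So, as in Fefferman--Graham, "formally" means either to infinite order or modulo $O(\rho^{k/2})$ tangentially, with the precise order dictated by the first obstruction.

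This gives formal minimality and formal uniqueness, given the uniqueness of normal forms. To transfer to an arbitrary ambient space in the class, and to change the representative $g$, we use that ambient-equivalent metrics differ by a diffeomorphism fixing $\{\rho=0\}$ and commuting with dilations. That diffeomorphism maps formally minimal equivariant extensions to such extensions, and uniqueness forces compatibility. Next we check that $(\cmS,\cjmath^\ast\cg)$ is a pre-ambient space for $(\Sigma,j^\ast\kc)$: the induced metric is homogeneous of degree $2$, restricts on $\{\rho=0\}$ to the tautological tensor, and is nondegenerate near $\rho=0$. These properties are read off directly from the expansion $\cjmath^\ast\cg=2\rho\,dt^2+2t\,dt\,d\rho+t^2(j^\ast g+O(\rho))$. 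Note that $\partial_\rho u = O(1)$ contributes only to $d\rho^2$ terms, which preserves the pre-ambient form.

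For the invariants, let $\cI$ be a natural submanifold scalar of homogeneity $w$ built from $\cRm$, $\cnabla$, and the second fundamental form of $\cmS$. Its restriction $\iota^\ast\cI$ to $\{\rho=0,t=1\}$ transforms under change of representative $\hg=e^{2\Upsilon}g$ by the pullback via the ambient diffeomorphism, which acts by $t\mapsto e^{-\Upsilon}t$ on the cone. Homogeneity $w$ therefore yields conformal weight $w$. The main obstacle is verifying that $\iota^\ast\cI$ depends only on the formally determined jets of $\cg$ and $\cjmath$. We plan to count derivatives: an invariant of homogeneity $w\ge -k$ involves at most $\lfloor (k+w)\rfloor$-type orders of $\partial_\rho$, which stay below the indeterminacy order $k/2$ of $U_m$ and $n/2$ of $g_\rho$. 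We expect this counting, following Fefferman--Graham's argument with weights of $\partial_\rho$ set to $-2$, to require care with the mixed terms, and we would treat it by tracking homogeneity degrees term by term.
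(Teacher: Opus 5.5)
Your overall route (straight normal form, a graphical ansatz via $\exp^\perp$, inductive determination of $\xi_\rho$, then a Fefferman--Graham-type count) is the paper's. However, the key indicial computation is off by one, and the error reaches the range of weights. In the normal form $\cg = 2\rho\,dt^2+2t\,dt\,d\rho+t^2g_\rho$ with $\cjmath(t,x,\rho) = (t,\exp^\perp\xi_\rho(x),\rho)$, replacing $\xi_\rho$ by $\xi_\rho+\rho^mf$ gives
\begin{equation*}
    (k+2)\cH_{\gamma'} \longmapsto (k+2)\cH_{\gamma'} + m(k+2-2m)\rho^{m-1}f_{\gamma'} + O(\rho^m) .
\end{equation*}
The factor is not proportional to $m-k/2$. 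The normal projection of $\partial_{\gamma'}$ acquires $-mt\rho^{m-1}f_{\gamma'}\partial_0$, so $\cL_{\alpha\beta\gamma'}$ gains $mt^2\rho^{m-1}f_{\gamma'}g_{\alpha\beta}$ and contributes $km$ after tracing. Meanwhile $\cL_{\infty\infty\gamma'}$ gains $m(m-1)t^2\rho^{m-2}f_{\gamma'}$, which against $\ch^{\infty\infty}=-2t^{-2}\rho+O(\rho^2)$ contributes $-2m(m-1)$. The factor $m(k-2m)$ is what the Laplacian of $\cjmath^\ast\cg$ gives on functions of homogeneity $0$. But $\cg(\partial_{\alpha'},\partial_{\beta'})=t^2g_{\alpha'\beta'}$, so the displacement measured against a unit normal is $t\rho^mU_m$, of homogeneity $1$, and this moves the root to $m=k/2+1$. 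Thus $U_1=-H,U_2,\dotsc,U_{k/2}$ are all forced, $\cH=O(\rho^{k/2})$ is attainable, and the first free coefficient is $U_{k/2+1}$. The shift is not cosmetic. For $k=2$ your factor makes $U_1$ free, whereas $(k+2)\cH_{\gamma'}=k\bigl(H_{\gamma'}+(U_1)_{\gamma'}\bigr)+O(\rho)$ forces $U_1=-H$. Moreover, the weight $-2=-k$ invariant $\iota^\ast\lvert\cL\rvert^2$, which evaluates to $\lvert\intl\rvert^2$ at $t=1$, depends on $U_1$. In general, with your factor you would determine only $\xi_\rho$ mod $O(\rho^{k/2})$ and reach only $\cH=O(\rho^{k/2-1})$. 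That is one order short both of the definition of an extrinsic ambient space and of what weight $-k$ invariants require.

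Two further steps are asserted rather than carried out. First, for uniqueness you only compare solutions inside the graphical ansatz. You must show that every dilation-equivariant nondegenerate immersion extending $\jmath$ can be reparametrized into that form. The paper gets this from $\cjmath^\ast\cg(\cX,\cdot)=d(t^2\crho)$ with $\crho:=\rho\circ\cjmath$, so nondegeneracy forces $d\crho\neq0$ along $\iota(\mS)$. It then applies a flowout argument and glues local inverses of $\exp^\perp$, which is needed because $j$ is only an immersion. Second, for the descent of invariants, your derivative count is the heart of the matter, and the bound you state points the wrong way. More negative weight permits more $\rho$-derivatives: $\iota^\ast\cDelta^{k/2-1}\lvert\cL\rvert^2$ has weight $-k$ and uses $k/2-1$ of them. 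The missing idea is the strength filtration, in which $\lVert A\rVert$ counts indices in $\{1,\dotsc,n\}$ once and $\infty$ twice. One shows that $\cL^{(r)}_{A\alpha'}$ mod $O(\rho^{(k+2-\lVert A\rVert)/2})$ depends only on $\xi_\rho$ mod $O(\rho^{(k+2)/2})$ and $g_\rho$ mod $O(\rho^{n/2})$. The proof uses $\cX\lrcorner\cL=0$, $\widetilde{\overline{\nabla}}\cX=\Id$ and the resulting formulas for $\cX$-contractions of $\cL^{(r)}$, plus $\widetilde{\overline{\Gamma}}{}^\infty_{0a}=0$ to exclude strength-raising Christoffel terms. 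Combine this with Fefferman--Graham's Proposition~6.2 for $\cnabla^r\cRm$ and the bookkeeping $w=-2K+2p+2q\geq-k$, $S_a\geq4$, $S'_b\geq3$, and $p\geq1$ or $q\geq2$. This yields $S_a\leq k+2$ and $S'_b\leq k+1$, exactly the range where each factor is unambiguous. Finally, the correct conformal weight is not enough. You must also verify that $\iota^\ast\cI$ is a natural submanifold scalar, meaning a universal polynomial in jets in Fermi coordinates that is equivariant under diffeomorphisms; the paper does this via Proposition~\ref{prop:natural}.
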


We call $\cjmath \colon \cmS \to (\cmG,\cg)$ the \emph{extrinsic ambient space} of $j$.
In our terminology, the conformal class of $\Sigma$ is not specified as part of the data of a conformal submanifold;
rather, $\Sigma$ inherits the conformal structure $j^\ast\kc$.
The map $j$ is an immersion, not necessarily injective, and the invariants we construct are local invariants of the unparameterized submanifold $j(\Sigma)$.
See Section~\ref{sec:bg} for definitions of conformal submanifolds, natural submanifold scalars, and conformal submanifold scalars.

Theorem~\ref{thm:rough-ambient-space} combines two results.
The first result, stated as Theorem~\ref{thm:extrinsic-ambient-space} below, is the existence and uniqueness of the extrinsic ambient space;
see Section~\ref{sec:ambient} for a precise formulation.
If $k$ is even, then there is an obstruction, regarded here as a natural submanifold section of the conormal bundle $N^\ast\Sigma$, to $\cjmath$ being smooth to all orders.
This obstruction was studied in detail by Graham and Reichert~\cite{GrahamReichert2020} via Poincar\'e spaces;
see Theorem~\ref{thm:obstruction} for a treatment via the extrinsic ambient space.
The second result, stated as Theorem~\ref{thm:construction-of-scalars} below, is that natural submanifold scalars descend to conformal submanifold scalars under suitable assumptions.
We expect that, in analogy with a result of Bailey, Eastwood, and Graham~\cite{BaileyEastwoodGraham1994}, \emph{all} (even) conformal submanifold scalars of weight $w \geq -k$ arise from the construction of Theorem~\ref{thm:rough-ambient-space};
the bound on $w$ stems from the aforementioned obstruction.

The first part of Theorem~\ref{thm:rough-ambient-space} is originally due to Case, Graham, and Kuo~\cite{CGK23}, who constructed the extrinsic ambient space as the homogeneous lift of the formally minimal extension of $j(\Sigma)$ into a Poincar\'e space for $(M^n,\kc)$;
the latter extension is due to Graham and Witten~\cite{GrahamWitten1999} and, in a more invariant way, Graham and Reichert~\cite{GrahamReichert2020}.
We present an independent proof of Theorem~\ref{thm:rough-ambient-space} for three reasons.
First, Case, Graham, and Kuo primarily focus on the extrinsic ambient space of a minimal submanifold of an Einstein manifold.
We clarify their results for general conformal submanifolds.
Second, our proof is direct, in that it does not require Poincar\'e spaces or minimal immersions therein.
This yields a conceptual simplification to our construction and computation of conformal submanifold invariants.
Third, our presentation focuses on the immersion $j$, which is necessary when considering global invariants.
This global perspective is only implicit in the work of Case, Graham, and Kuo, which concerned a construction of local invariants.

Curry, Gover, and Snell~\cite{CurryGoverSnell2023} developed a different approach to constructing conformal submanifold invariants based on the tractor calculus.
The primary benefit of our approach is that it enables us to compute conformal submanifold invariants via straightening.
We expect that there is a close link between our extrinsic ambient space and their extrinsic tractor calculus (cf.\ \cite{CapGover2003}).

Our second main result is a method for systematically computing conformal submanifold scalars.
For example:

\begin{theorem}
    \label{thm:compute-straight}
    Fix positive integers $k,n$ such that $n > k \geq 2$.
    Let $a,b,c$ be nonnegative integers such that $a+2b+2c \leq k$, and let $\cmP_{a,b}$ be a scalar polynomial of degree $a$ in the second fundamental form and degree $b$ in the Riemann curvature tensor, regarded as a natural submanifold scalar on $(k+2)$-submanifolds of $(n+2)$-manifolds.
    If $j \colon \Sigma^k \to (M^n,g)$ is a minimal submanifold of an Einstein manifold with $\Ric = (n-1)\lambda g$, then
    \begin{equation*}
        \bigl( \iota^\ast \cDelta^c \cmP_{a,b} \bigr)^{j^\ast g} = \left( \prod_{s=0}^{c-1} \bigl( \Delta^{j^\ast g} + (a+2b+2s)(k-a-2b-2s-1)\lambda \bigr) \right) \bigl( \iota^\ast \cmP_{a,b} \bigr)^{j^\ast g} .
    \end{equation*}
\end{theorem}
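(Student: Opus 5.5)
The plan is to compute in the explicit extrinsic ambient space that is available when $j$ is minimal in an Einstein manifold. Since $\Ric = (n-1)\lambda g$, the ambient metric is the straight, globally defined one
\begin{equation*}
    \cg = 2\rho\,dt^2 + 2t\,dt\,d\rho + t^2 g_\rho, \qquad g_\rho = \bigl(1 - \tfrac{\lambda\rho}{2}\bigr)^2 g ,
\end{equation*}
on $\cmG = \bR_+ \times M \times \bR$. Following Case, Graham, and Kuo, the extrinsic ambient space of the minimal submanifold is then the product $\cmS = \bR_+ \times \Sigma \times \bR$ with $\cjmath(t,x,\rho) = (t,j(x),\rho)$. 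I would first verify directly that this $\cjmath$ is minimal and dilation-equivariant and restricts correctly at $\rho=0$; by the uniqueness part of Theorem~\ref{thm:rough-ambient-space} it is then \emph{the} extrinsic ambient space, with induced metric $\cjmath^\ast\cg = 2\rho\,dt^2 + 2t\,dt\,d\rho + t^2(1-\lambda\rho/2)^2 h$, where $h = j^\ast g$. Minimality uses only that $\Sigma$ is minimal in $(M,g)$, since the ambient metric is a warped product over $M$.

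Next I compute $\cmP_{a,b}$ on $\cmS$. The second fundamental form of $\cmS$ is supported in the $\Sigma$-directions and equals $t^2(1-\lambda\rho/2)$ times the second fundamental form of $\Sigma$. The ambient curvature $\cRm$ of the straight Einstein ambient metric vanishes (it is flat-type: $\cRm$ restricted to $TM$ is $t^2(1-\lambda\rho/2)^2$ times the Weyl tensor of $g$, with all other components zero). Contracting with $\cjmath^\ast\cg^{-1}$, whose $\Sigma$-block is $t^{-2}(1-\lambda\rho/2)^{-2}h^{-1}$, I get
\begin{equation*}
    \cmP_{a,b} = t^{-a-2b}\bigl(1-\tfrac{\lambda\rho}{2}\bigr)^{-a-2b} P_{a,b},
\end{equation*}
where $P_{a,b}$ is the same polynomial built from the data on $\Sigma \subset M$. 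In particular $\iota^\ast\cmP_{a,b} = P_{a,b}$ in the scale $h$.

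The core step is to show that the intrinsic Laplacian $\cDelta$ of $\cjmath^\ast\cg$ acts on functions $f = t^{-w}\phi(\rho)u(x)$ with $w = a+2b+2s$ and $\phi = (1-\lambda\rho/2)^{-w}$ in the form
\begin{equation*}
    \cDelta\bigl(t^{-w}(1-\tfrac{\lambda\rho}{2})^{-w} u\bigr) = t^{-w-2}\bigl(1-\tfrac{\lambda\rho}{2}\bigr)^{-w-2}\bigl(\Delta u + w(k-w-1)\lambda u\bigr).
\end{equation*}
I would establish this by writing the inverse metric of $\bR_+\times\Sigma\times\bR$ in $(t,\rho)$ coordinates ($\cg^{t\rho} = t^{-1}$, $\cg^{\rho\rho} = -2\rho t^{-2}$, $\cg^{tt}=0$), using the volume density $t^{k+1}(1-\lambda\rho/2)^k$, and computing the divergence. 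The closed form for $\phi$ is what makes the $\rho$-dependence reproduce itself; the $\rho$-derivatives produce the constant $w(k-w-1)\lambda$. This computation is the main obstacle: one must check that the $\rho$-terms with a prefactor of $\rho$ (coming from $\cg^{\rho\rho}$) cancel exactly against the variation of the warping factor, so that the result is again of the same product form. I would check it first with $\lambda = 0$, and then at $\rho=0$ with general $\lambda$.

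Iterating this identity $c$ times, with $w = a+2b+2s$ for $s=0,\dots,c-1$, and restricting to $t=1$, $\rho=0$ yields the stated product formula. The hypothesis $a+2b+2c\le k$ keeps the weights within the range $w \geq -k$ of Theorem~\ref{thm:rough-ambient-space}, so that the ambient quantities are determined independently of any obstruction ambiguity.
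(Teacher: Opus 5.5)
Your route is the paper's own: work in the canonical extrinsic ambient space $\cjmath(t,x,\rho)=(t,j(x),\rho)$ of Lemma~\ref{lem:minimal-in-einstein-ambient-space}, show $\cmP_{a,b}=\tau^{-a-2b}\varpi^\ast P_{a,b}$, apply an exact formula for $\cDelta$ on functions $\tau^{w}\varpi^\ast u$, iterate, and restrict to $t=1$, $\rho=0$. The bound $a+2b+2c\le k$ guarantees independence of choices via Theorem~\ref{thm:construction-of-scalars}. However, the metric you write down has the wrong sign, and with it the key step is false.

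In the paper's conventions ($\cg=2\rho\,dt^2+2t\,dt\,d\rho+t^2g_\rho$ with $g_\rho=g+2P\rho+\dotsb$), the condition $\Ric=(n-1)\lambda g$ gives $P=\frac{\lambda}{2}g$. So the Ricci-flat metric is $g_\rho=(1+\lambda\rho/2)^2g$, i.e.\ $\tau=t(1+\lambda\rho/2)$. Your $(1-\lambda\rho/2)^2g$ is the ambient metric for $\Ric=-(n-1)\lambda g$. It is not Ricci flat here when $\lambda\neq0$, so you cannot invoke uniqueness.

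Your Laplacian identity also fails for your warping factor. Take $\phi=1+\mu\rho$ and $f=(t\phi)^{-w}u$, and use the volume density $t^{k+1}\phi^k\sqrt{\det h}$ of $\cjmath^\ast\cg$. Because $\phi-\mu\rho=1$, the $(t,\rho)$-part of $\cDelta f$ collapses exactly to $2\mu w(k-w-1)(t\phi)^{-w-2}u$, and the $\Sigma$-part is $(t\phi)^{-w-2}\Delta u$. Hence the shift is $+w(k-w-1)\lambda$ precisely when $\mu=\lambda/2$. With your $\mu=-\lambda/2$ it is $-w(k-w-1)\lambda$. The two sign errors cancel in your final line. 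But carrying out your plan as written would produce the statement with $\lambda$ replaced by $-\lambda$.

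There are two further slips in the same computation.
\begin{itemize}
\item The second fundamental form is $\cL=\tau^2\varpi^\ast L$, since $\cg$ restricts to $\tau^2 g$ on $TM$ and $\partial_{\alpha'}$ is already $\cg$-normal. It is not $t^2(1-\lambda\rho/2)\varpi^\ast L$. With a single power of the warping factor you would get $t^{-a-2b}\phi^{-2a-2b}$ rather than $\tau^{-a-2b}$, and the Laplacian identity (which needs $t$ and $\phi$ to the same power) would not apply.
\item $\cRm$ does not vanish. Its only nonzero components are tangent to $M$ and equal $\tau^2\varpi^\ast W$, so $P_{a,b}$ is the same polynomial in $\intl=L$ and the Weyl tensor.
\end{itemize}

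Also, checking the Laplacian identity only at $\rho=0$ would not justify the iteration, since each application of $\cDelta$ differentiates in $\rho$. The identity holds exactly in $\rho$ because $\phi$ is affine with $\phi(0)=1$. With $\tau=t(1+\lambda\rho/2)$ and these corrections, your argument is exactly the paper's proof via Lemmas~\ref{lem:sff-riem-straight}, \ref{lem:contraction-straight}, \ref{lem:straight-is-almost-conformal} and Proposition~\ref{prop:straightIell}.
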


Here the Riemann curvature tensor is that of $(\cmG,\cg)$ and $\Delta := -\nabla^a\nabla_a$.
Direct computation implies that the scalar conformal invariant $\iota^\ast\cmP_{a,b}$ is the same polynomial $P_{a,b}$ of degree $a$ in the trace-free part of the second fundamental form and degree $b$ in the Weyl tensor, where the Weyl tensor is that of the target manifold $(M^n,g)$.
The key point of Theorem~\ref{thm:compute-straight} is that it explicitly expresses a conformal submanifold scalar of higher order in terms of a conformal submanifold scalar of low order modulo natural divergences, when evaluated at a minimal submanifold of an Einstein manifold.
This is particularly useful for computing global invariants, including renormalized extrinsic curvature integrals.

Theorem~\ref{thm:compute-straight} is the extrinsic analogue of a recent result of Case, Khaitan, Lin, Tyrrell, and Yuan~\cite{CKLTY2024}, and our proof is analogous:
If $j \colon \Sigma^k \to (M^n,g)$ is a minimal submanifold and $\Ric = (n-1)\lambda g$, then, as observed by Case, Graham, and Kuo~\cite{CGK23}, $\cjmath(t,x,\rho) := \bigl(t,j(x),\rho\bigr)$ defines an extrinsic ambient space
\begin{align*}
    \cjmath & \colon \bR_+ \times \Sigma^k \times (-\varepsilon,\varepsilon ) \to \bR_+ \times M^n \times (-\varepsilon,\varepsilon ) , \\
    \cg & := 2\rho\,dt^2 + 2t\,dt\,d\rho + \tau^2g , \\
    \tau & := t(1+\lambda\rho/2) .
\end{align*}
Direct computation~\cites{CLY23,Matsumoto2013} shows that there is a $P_{a,b} \in C^\infty(\Sigma)$ such that
\begin{equation*}
    \cmP_{a,b} = \tau^{-a-2b}\varpi^\ast P_{a,b} ,
\end{equation*}
where $\varpi \colon \mathbb{R}_+ \times \Sigma \times (-\varepsilon,\varepsilon) \to \Sigma$ is the canonical projection, and that
\begin{equation*}
    \cDelta^{\cjmath^\ast\cg}\bigl( \tau^w\varpi^\ast u \bigr) = \tau^{w-2}\varpi^\ast\bigl( \Delta^{j^\ast g} - w(k+w-1)\lambda \bigr)u
\end{equation*}
for all $u \in C^\infty(\Sigma)$ and $w \in \mathbb{R}$.
See Section~\ref{sec:straightenable} for details.

Our third main result is a general construction of global invariants of conformally compact minimal submanifolds of conformally compact Einstein manifolds:

\begin{theorem}
    \label{thm:albin}
    Fix integers $2 \leq k < n$ with $k$ even.
    Let $I$ be a natural submanifold scalar on $k$-submanifolds of $n$-manifolds.
    If $j \colon \Sigma^k \to (M^n,g_+)$ is a conformally compact minimal submanifold of a conformally compact Einstein manifold and if $r$ is a geodesic defining function for $\partial_\infty M$, then the integral $\int I \darea$ has an asymptotic expansion
    \begin{equation*}
        \int_{j^{-1}(\{ r>\varepsilon \})} I^{j,g_+} \darea_{j^\ast g_+} = a_{(0)}\varepsilon^{1-k} + a_{(2)}\varepsilon^{3-k} + \dotsm + a_{(k-2)}\varepsilon^{-1} + \mathscr{I} + o(1)
    \end{equation*}
    as $\varepsilon \to 0^+$, where $a_{(0)},\dotsc,a_{(k-2)},\mathscr{I} \in \mathbb{R}$.
    Moreover, $\mathscr{I}$ is independent of the choice of $r$, and hence defines a global invariant of $j \colon \Sigma^k \to (M^n,g_+)$.
\end{theorem}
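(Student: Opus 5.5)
Following Albin and Graham--Witten, the plan is to use the even/odd structure of the Poincaré-type expansions along $\Sigma$ and then show the constant term is independent of the choice of geodesic defining function.

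\textbf{Setup.} Write $g_+ = r^{-2}(dr^2 + g_r)$ near $\partial_\infty M$, where $g_r$ has an even expansion in $r$ up to order $n$ (Fefferman--Graham). By the Graham--Witten/Graham--Reichert analysis, $j(\Sigma)$ meets $\partial_\infty M$ orthogonally. It can be written as a graph $u(x,r)$ over $\partial\Sigma\times[0,\epsilon)$, and $u$ is even in $r$ up to order $k+2$; the first odd term is the obstruction/undetermined term at order $r^{k+2}$. Hence the pulled-back metric has the form $j^\ast g_+ = r^{-2}(dr^2(1+O(r^2)) + h_r)$, with even expansion through order $k$ (after a change of coordinates on $\Sigma$).

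\textbf{Expansion of the integrand.} Because $I$ is natural and $g_+$ is asymptotically hyperbolic, $I^{j,g_+}$ is a polynomial in $\nabla^m \Rm$ and $\nabla^m L$ contracted with $g_+$. Each factor, written in compactified quantities, has an expansion in $r$ that is even to the relevant order. Then $I\,\darea = r^{-k}\bigl(\sum_{\ell} c_{2\ell}\, r^{2\ell} + O(r^{k})\bigr)\,dr\,\darea_{\partial\Sigma}$, with no odd terms below order $r^{k-1}$. Integrating over $r>\varepsilon$ gives the stated expansion:
\begin{itemize}
\item the powers $\varepsilon^{1-k+2\ell}$, $0\le 2\ell\le k-2$, arise from the even coefficients;
\item there is no $\log\varepsilon$ term, since the $r^{-1}$ coefficient would come from an odd term $c_{k-1}$, which vanishes;
\item the constant $\mathscr I$ then follows.
\end{itemize}
Because $k$ is even, the parity exactly kills the log term.

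\textbf{Independence of $r$.} Another geodesic defining function has the form $\hat r = e^{\omega} r$, where $\omega$ is even in $r$ to high order (Graham's lemma). The plan is to follow Graham's argument. Write
\[
\int_{\hat r>\varepsilon} - \int_{r>\varepsilon} = \int_{\partial\Sigma}\int_{\varepsilon}^{\varepsilon e^{-\omega}} \dots,
\]
with $\hat r = \varepsilon$ solved as $r = \varepsilon b(x,\varepsilon)$, where $b$ is even in $\varepsilon$. Expanding the integrand $r^{-k}\sum c_{2\ell} r^{2\ell}$ and integrating from $\varepsilon$ to $\varepsilon b$ produces only odd powers $\varepsilon^{1-k+2\ell}$ plus $o(1)$. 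Since $k$ is even, no $\varepsilon^0$ term appears, so $\mathscr I$ is unchanged.

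\textbf{Main obstacle.} The hard step is justifying evenness of the integrand expansion to order $r^{k-1}$ for an arbitrary natural submanifold scalar, including derivatives of the second fundamental form and normal-bundle quantities. This requires care because $I$ need not have any homogeneity. I would handle it by conjugating to the compactified metric $\bar g = r^2 g_+$, expressing $L$ and $\Rm$ of $g_+$ via conformal transformation laws in terms of $\bar g$, $r$, and $\bar\nabla r$, and then checking parity degree by degree. This uses the fact that the unit normal and $|d r|_{\bar g}$ are even to the needed order.
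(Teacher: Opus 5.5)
Your proposal is correct and follows essentially the paper's proof, in three steps:
evenness of the graphing function and of the induced metric (Lemma~\ref{lem:even-submanifolds-facts}, adapted from Graham--Witten);
evenness through order $r^{k-1}$ of any natural submanifold scalar (Lemma~\ref{lem:expansion}), which the paper proves without conformal transformation laws, using the frames $\rho\partial_\rho,\rho\partial_{x^i}$, in which the Levi-Civita connection, the tangential and normal projections, $L$, and $\overline{\nabla}$ all preserve a $\mathbb{Z}_2$-grading and no orders are lost;
and Graham's comparison $r=\varepsilon b(\varepsilon,x)$ with $b$ even, applied to the pulled-back defining function on $\Sigma$, which is even but not geodesic (Proposition~\ref{prop:fundamental-renormalization}).
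One harmless slip: for $k$ even the first coefficient of $u$ not forced to be even is at order $r^{k+1}$ (not $r^{k+2}$), and $g_r$ is even only through order $r^{n-2}$ (not $r^{n}$); your argument only uses evenness of $u$ through order $r^{k}$ and of $g_r$ through order $r^{k-1}$, and both hold since $k<n$.
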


See Section~\ref{sec:albin} for definitions of conformally compact (sub)manifolds, including of the conformal infinity $j_\infty \colon \partial_\infty \Sigma \to \partial_\infty M$.

Theorem~\ref{thm:albin} is the extrinsic analogue of a result of Albin~\cite{Albin2009}.
It allows one to define the \defn{renormalized extrinsic curvature integral} $\smallintr I \darea$ by
\begin{equation*}
 \intr I \darea := \mathscr{I} .
\end{equation*}
When $I=1$, this recovers the \defn{renormalized area} $\mathscr{A}$ of Graham and Witten~\cite{GrahamWitten1999}.
When $I = \Pf(\oRm)$ is the Pfaffian of the Riemann curvature tensor of $j^\ast g_+$, a result of Albin~\cite{Albin2009} yields the Gauss--Bonnet--Chern-type formula
\begin{equation}
    \label{eqn:renormalized-gbc}
    \intr \Pf(\oRm) \darea = (2\pi)^{k/2}\chi(\Sigma) .
\end{equation}

Similar to Albin, we prove Theorem~\ref{thm:albin} by carefully studying the asymptotic expansions of natural submanifold tensors in terms of a geodesic defining function for $\partial_\infty M$.
Indeed, we prove a general result about asymptotic expansions of integrals in all dimensional parities that depends only on the asymptotic behavior of the metrics $g_+$ and $j^\ast g_+$.
In Lemma~\ref{lem:renormalized-divergence-is-zero}, we also establish that the renormalized integral of a natural divergence vanishes (cf.\ \cite{CKLTY2024}).

Theorems~\ref{thm:rough-ambient-space}, \ref{thm:compute-straight}, and~\ref{thm:albin} allow us to compute a large class of renormalized extrinsic curvature integrals;
see Theorem~\ref{thm:straight}.
Specializing to Equation~\eqref{eqn:renormalized-gbc} yields the following Gauss--Bonnet--Chern-type formula involving the renormalized area:

\begin{corollary}
    \label{cor:renormalized-area}
    Let $j \colon \Sigma^k \to (M^n,g_+)$, $k < n$ and $k$ even, be a conformally compact minimal submanifold of a conformally compact Einstein manifold.
    Then
    \begin{equation}
        \label{eqn:renormalized-area}
        (2\pi)^{k/2}\chi(\Sigma) = (-1)^{k/2}(k-1)!!\mathscr{A} + \sum_{r=1}^{k/2} 2^{r-k/2}\frac{(r-1)!}{(k/2-1)!}\int_\Sigma \mathcal{P}_{r,k} \darea ,
    \end{equation}
    where $\mathcal{P}_{r,k} := \iota^\ast \bigl( \widetilde{\Delta}^{k/2-r}\Pf_r(\widetilde{\overline{\Rm}}) \bigr)$.
\end{corollary}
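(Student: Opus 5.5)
The plan is to compute $\smallintr \Pf(\oRm)\darea$ in two ways: Albin's formula~\eqref{eqn:renormalized-gbc} on one side, and on the other an expansion of $\Pf(\oRm)$ of $j^\ast g_+$ into pieces that can be renormalized termwise.

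First I need a local expression for $\Pf(\oRm)$ on the minimal submanifold. On the ambient side, $\cjmath^\ast\cg$ has intrinsic curvature $\widetilde{\overline{\Rm}}$, which by the Gauss equation is a polynomial in the second fundamental form and $\cRm$. The key identity is a binomial-type expansion of the Pfaffian of the intrinsic curvature of $j^\ast g_+$. Since $g_+$ is Einstein with $\Ric = -(n-1)g_+$, we have $\lambda=-1$, and $\oRm$ is the sum of a ``hyperbolic'' part $-\tfrac12 (j^\ast g_+)\wedge (j^\ast g_+)$ and a remainder $W$ built from the Weyl tensor and second fundamental form (the mean curvature vanishes). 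This gives
\begin{equation*}
 \Pf(\oRm) = \sum_{r=0}^{k/2} c_{r,k}\,\Pf_r(W),
\end{equation*}
where $c_{0,k}=(-1)^{k/2}(k-1)!!$ is the curvature normalization of hyperbolic space in the Pfaffian convention, and $c_{r,k}$ is an explicit combinatorial constant.

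Next I identify each $\Pf_r(W)$ using the straight extrinsic ambient space. The ambient intrinsic curvature $\widetilde{\overline{\Rm}}$ restricts, on $\tau=1$ slices, to exactly $W$ (times a power of $\tau$), so $\Pf_r(\widetilde{\overline{\Rm}})=\tau^{-2r}\varpi^\ast \Pf_r(W)$. This is an instance of $\cmP_{a,b}$ with $a+2b=2r$. Theorem~\ref{thm:compute-straight} with $c=k/2-r$ and $\lambda=-1$ then gives
\begin{equation*}
 \mathcal{P}_{r,k} = \prod_{s=0}^{k/2-r-1}\bigl(\Delta - (2r+2s)(k-2r-2s-1)\bigr)\Pf_r(W).
\end{equation*}
Expanding the product, $\mathcal{P}_{r,k}$ equals $\prod_s\bigl(-(2r+2s)(k-2r-2s-1)\bigr)\Pf_r(W)$ modulo natural divergences. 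By Lemma~\ref{lem:renormalized-divergence-is-zero}, those divergences have vanishing renormalized integral. Also, $\mathcal{P}_{r,k}$ has weight $-k$ for $r\ge1$, so it is integrable and its renormalized integral equals its ordinary integral.

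Finally I apply $\smallintr$ to the expansion of $\Pf(\oRm)$. The $r=0$ term gives $(-1)^{k/2}(k-1)!!\,\mathscr{A}$, and by~\eqref{eqn:renormalized-gbc} the left side is $(2\pi)^{k/2}\chi(\Sigma)$. It remains to check that
\begin{equation*}
 c_{r,k}\Big/\prod_{s=0}^{k/2-r-1}\bigl(-(2r+2s)(k-2r-2s-1)\bigr) = 2^{r-k/2}\frac{(r-1)!}{(k/2-1)!}.
\end{equation*}
I expect this bookkeeping to be the main obstacle: pinning down $c_{r,k}$ under the paper's Pfaffian normalization, including signs and double factorials, and verifying that the product telescopes to this ratio. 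I would try it first in the cases $k=2$ and $k=4$ against the known formulas of~\cites{AM2010,CGKTW24} to fix the conventions.
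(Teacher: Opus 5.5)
Your proposal is correct and follows the paper's proof essentially step for step. The paper derives the same expansion $\overline{\Pf} = \sum_r (k-2r-1)!!\lambda^{k/2-r}\Pf_r(\widehat W)$, with $\widehat W := j^\ast W + \tfrac12 \intl\wedge\intl$, from the binomial theorem and Equation~\eqref{eqn:Pf-with-g}; it converts each $\Pf_r(\widehat W)$ to $\mathcal{P}_{r,k}$ modulo natural divergences via straightness, and concludes with Lemmas~\ref{lem:renormalized-divergence-is-zero}, \ref{lem:renormalized-pfaffian}, and~\ref{lem:basic-renormalization}. The bookkeeping you flagged closes immediately: $c_{r,k} = (-1)^{k/2-r}(k-2r-1)!!$ and $\prod_{s=0}^{k/2-r-1}\bigl(-(2r+2s)(k-2r-2s-1)\bigr) = (-2)^{k/2-r}\frac{(k/2-1)!}{(r-1)!}(k-2r-1)!!$, so their ratio is exactly $2^{r-k/2}\frac{(r-1)!}{(k/2-1)!}$.
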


Here $\widetilde{\overline{\Rm}}$ is the Riemann curvature tensor of the induced metric $\cjmath^\ast\cg$ on $\cmS$, where $\cjmath \colon \cmS \to (\cmG,\cg)$ is the extrinsic ambient space;
see Section~\ref{subsec:pseudo-riemannian} for the definition of the Pfaffian-like polynomial $\Pf_r$.
Remarkably, Equation~\eqref{eqn:renormalized-area} is the \emph{same} formula computed by Case, Khaitan, et al.~\cite{CKLTY2024} for the renormalized volume of an even-dimensional conformally compact Einstein manifold, except that it is stated in terms of extrinsic invariants.
Since $\cjmath^\ast\cg$ need not be Ricci-flat, $\mathcal{P}_{1,k}$ need not vanish.

There are four key points to Corollary~\ref{cor:renormalized-area}.
First, $\mathcal{P}_{r,k}$ is a conformal submanifold scalar of weight $-k$, and hence its integral is convergent.
Second, our result is valid in all even dimensions without any additional assumptions.
In particular, this improves the aforementioned result of Case, Graham, et al.~\cite{CGKTW24} by removing its dependence on the conjectural Alexakis-type decomposition in dimensions $k \geq 6$.
Third, Equation~\eqref{eqn:renormalized-area} gives an explicit formula for the conformal submanifold scalar.
Case, Graham, et al.~\cite{CGKTW24} proved that in each even dimension $k \geq 4$ there are conformal submanifold scalars of weight $-k$ on $k$-submanifolds of $n$-manifolds that are natural divergences.
Thus there is some freedom in how one writes Equation~\eqref{eqn:renormalized-area}.
Fourth, if $j$ is an immersion into a locally conformally flat manifold, then the ambient space $(\cmG,\cg)$ may be taken to be flat~\cite{FeffermanGraham2012}.
The Gauss equation then yields the simplification
\begin{equation*}
    \mathcal{P}_{r,k} = 2^{-r}\iota^\ast \bigl( \cDelta^{k/2-r} \Pf_r( \widetilde{L} \wedge \widetilde{L} ) \bigr) ,
\end{equation*}
where $L \wedge L$ denotes the normal trace of the Kulkarni--Nomizu product:
\begin{equation*}
    (L \wedge L)_{\alpha\beta\gamma\delta} := 2L_{\alpha\gamma\epsilon'}L_{\beta\delta}{}^{\epsilon'} - 2L_{\alpha\delta\epsilon'}L_{\beta\gamma}{}^{\epsilon'} .
\end{equation*}

Theorem~\ref{thm:albin} enters the proof of Corollary~\ref{cor:renormalized-area} only when manipulating renormalized extrinsic curvature integrals, especially when eliminating divergences.
The same algebraic manipulations yield a Gauss--Bonnet--Chern-type formula on compact minimal submanifolds of Einstein manifolds:

\begin{corollary}
    \label{cor:compact-area}
    Let $j \colon \Sigma^k \to (M^n,g)$, $k < n$ and $k$ even, be a compact minimal submanifold of an Einstein manifold with $\Ric = (n-1)\lambda g$.
    Then
    \begin{equation*}
        (2\pi)^{k/2}\chi(\Sigma) = (k-1)!!\lambda^{k/2}\Area_{j^\ast g}(\Sigma) + \sum_{r=1}^{k/2} 2^{r-k/2}\frac{(r-1)!}{(k/2-1)!}\int_\Sigma \mathcal{P}_{r,k} \darea ,
    \end{equation*}
    where $\mathcal{P}_{r,k}$ is as in Corollary~\ref{cor:renormalized-area}.
\end{corollary}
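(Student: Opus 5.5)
Since $\Sigma$ is compact without boundary, I will start from the classical Gauss--Bonnet--Chern theorem
\[
(2\pi)^{k/2}\chi(\Sigma) = \int_\Sigma \Pf(\oRm)\,\darea .
\]
The work is then to rewrite the integrand, evaluated on the minimal submanifold of the Einstein manifold, as $(k-1)!!\lambda^{k/2}$ plus a combination of the $\mathcal{P}_{r,k}$ and natural divergences. The divergences integrate to zero on a closed manifold.

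I will use the straight extrinsic ambient space
\[
\cjmath(t,x,\rho)=(t,j(x),\rho), \qquad \cg = 2\rho\,dt^2+2t\,dt\,d\rho+\tau^2 g, \qquad \tau=t(1+\lambda\rho/2),
\]
from Section~\ref{sec:straightenable}. Its induced metric $\cjmath^\ast\cg$ is the ambient-type metric built from $j^\ast g$ with the same $\tau$. The first step is an algebraic decomposition of $\oRm^{j^\ast g}$ relative to $\widetilde{\overline{\Rm}}$. Pulling back to $\{t=1,\rho=0\}$, the tangential components of $\widetilde{\overline{\Rm}}$ are, up to the $\tau^2$ scaling, $\oRm-\tfrac{\lambda}{2}\,j^\ast g\owedge j^\ast g$. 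This mirrors the Fefferman--Graham computation, since the $\partial_t,\partial_\rho$ directions of the straight ambient metric are curvature-free. Expanding $\Pf_{k/2}\bigl(\oRm\bigr)=\Pf_{k/2}\bigl(W+\tfrac{\lambda}{2}g\owedge g\bigr)$ binomially, with $W$ the tangential ambient curvature, gives
\[
\Pf(\oRm)=c_0\lambda^{k/2}+\sum_{r=1}^{k/2} c_r\lambda^{k/2-r}\,\iota^\ast\Pf_r(\widetilde{\overline{\Rm}}),
\]
with explicit constants $c_0=(k-1)!!$ (after normalization) and $c_r$. This is exactly the algebra of \cite{CKLTY2024}, and I will import their combinatorics verbatim.

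Next I apply Theorem~\ref{thm:compute-straight} with $\cmP_{a,b}=\Pf_r(\widetilde{\overline{\Rm}})$. Here the curvature is that of the induced ambient metric rather than of $\cg$. The statement is formulated for polynomials in $\widetilde L$ and $\widetilde{\Rm}$, but by the Gauss equation $\widetilde{\overline{\Rm}}$ is such a polynomial of total weight $2$ per factor, so $a+2b=2r$ applies. With $c=k/2-r$ this gives
\[
\mathcal{P}_{r,k}=\prod_{s=0}^{k/2-r-1}\bigl(\Delta+(2r+2s)(k-2r-2s-1)\lambda\bigr)\,\iota^\ast\Pf_r .
\]
Integrating over closed $\Sigma$ kills every $\Delta$ term. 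Hence
\[
\int_\Sigma\mathcal{P}_{r,k}\,\darea=\prod_{s}\bigl(2(r+s)(k-2r-2s-1)\lambda\bigr)\int_\Sigma\iota^\ast\Pf_r\,\darea .
\]
This product is nonzero for $r\ge 1$ and equals, up to normalization, $2^{k/2-r}\tfrac{(k/2-1)!}{(r-1)!}\tfrac{(k-2r-1)!!}{(k-1)!!\cdots}\lambda^{k/2-r}$. Solving for $\lambda^{k/2-r}\int\iota^\ast\Pf_r$ and substituting into the decomposition yields the stated coefficients $2^{r-k/2}(r-1)!/(k/2-1)!$. The $r=0$ term gives $(k-1)!!\lambda^{k/2}\Area_{j^\ast g}(\Sigma)$.

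The main obstacle is the bookkeeping of constants: the normalization of $\Pf_r$ from Section~\ref{subsec:pseudo-riemannian}, and checking that the binomial coefficients $c_r$ cancel against the eigenvalue products to leave exactly $2^{r-k/2}(r-1)!/(k/2-1)!$. The cleanest way to handle this is to note that the identical manipulation is done in \cite{CKLTY2024} for the intrinsic ambient metric. The same coefficients must therefore appear, because the computation only uses the straight form of $\cjmath^\ast\cg$ and Theorem~\ref{thm:compute-straight}. The sign difference from Corollary~\ref{cor:renormalized-area} comes from $\lambda=-1$ there, with the renormalized area replacing the area.
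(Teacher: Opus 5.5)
Your proposal is correct and is essentially the paper's proof. It writes $\overline{\Rm} = \widehat{W} + \frac{\lambda}{2}h \wedge h$ with $h = j^\ast g$ and $\widehat{W}$ the rescaled, and only nonzero (tangential), part of $\widetilde{\overline{\Rm}}$; you get this from the curvature of the straight induced metric, the paper from the Gauss equation and Lemmas~\ref{lem:sff-riem-straight} and~\ref{lem:contraction-straight}. It then expands binomially via Equation~\eqref{eqn:Pf-with-g}, which gives $c_r = (k-2r-1)!!$, applies the straight Laplacian formula (your Theorem~\ref{thm:compute-straight}, the paper's Corollary~\ref{cor:straight-mod-divergence}), and integrates using the Divergence Theorem and classical Gauss--Bonnet--Chern.

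One bookkeeping correction: the eigenvalue product is exactly $(2\lambda)^{k/2-r}(k/2-1)!\,(k-2r-1)!!/(r-1)!$, with no $(k-1)!!$ in it. So $c_r$ cancels at once, and you only divide by a nonzero numerical constant, not by the product itself, which vanishes when $\lambda = 0$ and $r < k/2$. This gives the stated coefficients for every $\lambda$.
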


We expect that Theorems~\ref{thm:rough-ambient-space}, \ref{thm:compute-straight}, and~\ref{thm:albin} have broad applications to rigidity results for conformally compact minimal submanifolds of conformally compact Einstein manifolds.
The following result should be prototypical:

\begin{theorem}
    \label{thm:rigidity}
    Let $j \colon \Sigma^k \to (M^n,g_+)$, $4 \leq k < n$ and $k$ even, be a conformally compact minimal submanifold of a conformally compact hyperbolic manifold.
    Suppose additionally that the conformal infinity $j_\infty \colon \partial_\infty\Sigma \to \partial_\infty M$ is umbilic.
    \begin{enumerate}
        \item For each $\ell \in \{ 1, \dotsc, k/2 \}$, it holds that
        \begin{equation}
            \label{eqn:totally-geodesic}
            \int_\Sigma \iota^\ast\left( (-\cDelta)^{k/2-\ell}\lv\widetilde{L}\rv^{2\ell} \right) \darea \geq 0
        \end{equation}
        with equality if and only if $j$ is totally geodesic.
        \item It holds that
        \begin{equation}
            \label{eqn:einstein}
            \int_\Sigma \iota^\ast\left( (-\cDelta)^{k/2-2}\lvert\widetilde{L}^2\rvert^2 \right) \darea \geq \frac{1}{k}\int_\Sigma \iota^\ast\left( (-\cDelta)^{k/2-2}\lvert\widetilde{L}\rvert^4 \right) \darea
        \end{equation}
        with equality if and only if $j$ is totally geodesic.
        \item If $n = k+1$, then
        \begin{equation}
            \label{eqn:lcf}
            \int_\Sigma \iota^\ast\left( (-\cDelta)^{k/2-2}\lvert\widetilde{L}^2\rvert^2 \right) \darea \leq \frac{k^2-3k+3}{k(k-1)}\int_\Sigma \iota^\ast\left( (-\cDelta)^{k/2-2} \lvert\widetilde{L}\rvert^4 \right) \darea
        \end{equation}
        with equality if and only if $(\Sigma,j^\ast g_+)$ is locally conformally flat.
    \end{enumerate}
\end{theorem}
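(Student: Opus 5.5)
The plan is to reduce each integral to a renormalized integral of a pointwise nonnegative quantity on $\Sigma$, using Theorem~\ref{thm:compute-straight} and the vanishing of renormalized divergences (Lemma~\ref{lem:renormalized-divergence-is-zero}).

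\emph{Step 1: reduce to straight ambient space.} Since $g_+$ is hyperbolic, the metric is Einstein with $\lambda = -1$ and locally conformally flat. We may therefore take the ambient space to be flat, with the straight extrinsic ambient space $\cjmath(t,x,\rho) = (t,j(x),\rho)$ described after Theorem~\ref{thm:compute-straight}. In this model $\widetilde{\Rm} = 0$, so each integrand is $\iota^\ast\bigl(\cDelta^{c}\cmP_{a,0}\bigr)$ with $a + 2c = k$. Here $\cmP_{a,0}$ is one of $\lv\widetilde{L}\rv^{2\ell}$, $\lvert\widetilde{L}^2\rvert^2$ or $\lv\widetilde{L}\rv^4$, and $\iota^\ast\cmP_{a,0} = P_a$ is the same polynomial in the trace-free second fundamental form of $j$; since $j$ is minimal, that is $L$ itself.

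\emph{Step 2: expand and discard divergences.} By Theorem~\ref{thm:compute-straight} with $\lambda=-1$,
\begin{equation*}
 \iota^\ast\bigl((-\cDelta)^{c}\cmP_{a,0}\bigr) = (-1)^c\prod_{s=0}^{c-1}\bigl(\Delta - (a+2s)(k-a-2s-1)\bigr)P_a .
\end{equation*}
Expanding the product, every term containing a factor of $\Delta$ is a natural divergence. By Lemma~\ref{lem:renormalized-divergence-is-zero}, such terms have vanishing renormalized integral. The integrand on the left has weight $-k$, so its integral converges and equals its renormalized integral. Hence
\begin{equation*}
 \int_\Sigma \iota^\ast\bigl((-\cDelta)^{c}\cmP_{a,0}\bigr)\darea = \prod_{s=0}^{c-1}(a+2s)(k-a-2s-1)\,\intr P_a \darea .
\end{equation*}
The constant is positive because $a+2s<k$ for $s\le c-1$; for $\ell=k/2$ it is simply $1$.

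\emph{Step 3: identify $\intr P_a\darea$ with a genuine integral.} This is the main obstacle. $P_a$ has homogeneity $-a$ in $g_+$, and in general $\int P_a\darea$ diverges. Here we use the umbilicity of $j_\infty$: on a minimal submanifold of a Poincar\'e--Einstein space, the leading coefficient of $L$ in the expansion in the geodesic defining function $r$ is the trace-free second fundamental form of $j_\infty$. Umbilicity kills it, and the Graham--Witten/Graham--Reichert expansion then gives $\lv L\rv_{g_+} = O(r^{2})$ or better. I would verify this decay by examining the minimal surface expansion, checking that the next term is also controlled in the hyperbolic case. With it, $P_a = O(r^{a})$, and the integral $\int P_a\darea$ converges absolutely once $a \ge$ a small threshold. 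For the borderline cases ($a=2$ with $k$ large), I would instead show that the divergent coefficients $a_{(2i)}$ in Theorem~\ref{thm:albin} vanish, or use that the renormalized integral of a nonnegative function with the stated decay equals its limit. This yields $\intr P_a\darea = \int_\Sigma P_a\darea$.

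\emph{Step 4: pointwise algebra.} For (1), $\lv L\rv^{2\ell}\ge0$, with equality everywhere iff $j$ is totally geodesic. For (2), $\lv L^2\rv^2 \ge \frac1k(\tr L^2)^2 = \frac1k\lv L\rv^4$ by Cauchy--Schwarz on the symmetric endomorphism $L^2 = L_{\alpha\gamma\epsilon'}L_\beta{}^{\gamma\epsilon'}$. Equality forces $L^2$ to be a multiple of the identity; combining this with the Gauss equation in hyperbolic space and Einstein rigidity should force $L=0$. I would verify this carefully, possibly via the conformal infinity being umbilic and unique continuation. For (3), with $n=k+1$ write $L=h\nu$, where $h$ is trace-free with eigenvalues $\mu_i$ satisfying $\sum\mu_i=0$. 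The Weyl tensor of $j^\ast g_+$ is, by Gauss, the trace-free part of $\tfrac12 h\wedge h$. Its squared norm is a combination $\alpha\sum\mu_i^4 - \beta(\sum\mu_i^2)^2$ which, after normalization, gives exactly
\begin{equation*}
 \frac{k^2-3k+3}{k(k-1)}\lv h\rv^4 - \lv h^2\rv^2 \ge 0 ,
\end{equation*}
with equality iff $W=0$. In dimension $k\ge4$, $W=0$ is equivalent to local conformal flatness. Combining Steps 2 and 3 with the positive constant then gives each inequality and its equality case.
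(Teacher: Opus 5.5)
Your Steps~1 and 2 and the pointwise algebra of Step~4 are sound and agree with the paper. Step~2 is Theorem~\ref{thm:straight}, with the positive constant $2^{k/2-\ell}(k/2-1)!\,(k-2\ell-1)!!/(\ell-1)!$. The identities $\lvert\overline{E}\rvert^2 = \lvert L^2\rvert^2 - \frac{1}{k}\lvert L\rvert^4$ and, for $n=k+1$, $\frac{k-2}{2}\lvert\overline{W}\rvert^2 = \frac{k^2-3k+3}{k-1}\lvert L\rvert^4 - k\lvert L^2\rvert^2$ are exactly what the paper uses.

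The gap is Step~3. The decay $\lvert L\rvert_{g_+} = O(\varrho^2)$ that you extract from umbilicity of $j_\infty$ is far too weak. Since $\darea_{j^\ast g_+} \sim \varrho^{-k}\,d\varrho\,dA$, the integrand $\lvert L\rvert^{2\ell} = O(\varrho^{4\ell})$ is integrable only if $4\ell > k-1$. This already fails for $\ell=1$, $k=6$, and it fails for the quartic integrands of (2) and (3) once $k\geq 10$. These are the generic cases, not borderline ones.

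Neither fallback closes the gap. Once the truncated integrals diverge, a renormalized integral of a nonnegative function carries no sign information: the renormalized integral of the constant $1$ over a totally geodesic $\mathbb{H}^2\subset\mathbb{H}^3$ is $-2\pi$. Showing that the divergent coefficients $a_{(2i)}$ vanish is, by monotone convergence, equivalent to the integrability you are missing, and you give no mechanism for it. Without genuine convergence you get neither the inequalities nor their equality cases.

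The missing idea is that, in a hyperbolic target, umbilicity of $j_\infty$ forces $L$ to vanish to order $\varrho^k$, not merely $\varrho^2$. The argument runs as follows.
\begin{enumerate}
\item Since $\partial_\infty M$ is locally conformally flat and $\dim\partial_\infty\Sigma = k-1\geq 3$, an umbilic $j_\infty$ is locally the M\"obius image of a piece of $\mathbb{R}^{k-1}\times\{0\}\subset\mathbb{R}^{n-1}$.
\item M\"obius transformations extend to isometries of $\mathbb{H}^n$. So locally $(M,g_+)$ is the upper half-space and $j_\infty$ is flat.
\item By Graham--Witten, the graphing function $u$ modulo $O(\varrho^{k+1})$ is locally determined by $j_\infty$. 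The totally geodesic $\mathbb{H}^k$ has $u\equiv 0$, hence $u = O(\varrho^{k+1})$.
\item A direct computation then gives $\lvert L\rvert_{g_+} = O(\varrho^k)$; this is Lemma~\ref{lem:second-fundamental-form-asymptotics}.
\end{enumerate}
With this decay, $\lvert L\rvert\in L^p(\Sigma)$ for all $p$. Every integral in your Step~2 is then absolutely convergent and equals its renormalized value, and (1)--(3) follow from your pointwise algebra.

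The same decay finishes the equality case of (2) without any unique continuation. Equality gives $\overline{E}=0$, so $j^\ast g_+$ is Einstein and, since $k\geq 4$, has constant scalar curvature. The twice-contracted Gauss equation then makes $\lvert L\rvert^2$ constant, and $\lvert L\rvert\to 0$ at infinity forces $L\equiv 0$.
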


Here $L_{\alpha\beta}^2 := L_{\alpha\gamma\gamma'}L_\beta{}^{\gamma\gamma'}$.
Locally conformally flat hypersurfaces of hyperbolic $n$-space, $n \geq 5$, are classified~\cite{DoCarmoDajczer1983}.

There are two key ingredients in the proof of Theorem~\ref{thm:rigidity}.
First, the classification of umbilic submanifolds of hyperbolic space and the fact~\cite{GrahamWitten1999} that $j$ mod $O(r^{k+1})$ is locally determined together imply that $\lvert L \rvert \in L^p(\Sigma)$ for all $p \in [1,\infty]$.
Second, our main results imply that the integrals appearing in Inequalities~\eqref{eqn:totally-geodesic}, \eqref{eqn:einstein}, and~\eqref{eqn:lcf} are proportional to the integrals of appropriate powers of $\lv L \rv^2$ and $\lv L^2\rv^2$.
The characterization of equality follows from the Gauss equations.

This paper is organized as follows:

In Section~\ref{sec:bg} we recall necessary background and fix our conventions.
This includes a discussion of natural invariants of Riemannian and conformal submanifolds.

In Section~\ref{sec:ambient} we give precise definitions of the extrinsic ambient space and extrinsic ambient equivalence, and then prove the first statement of Theorem~\ref{thm:rough-ambient-space}.

In Section~\ref{sec:invariants} we prove the second statement of Theorem~\ref{thm:rough-ambient-space}.

In Section~\ref{sec:straightenable} we introduce the notions of straight and straightenable submanifold tensors, and then give a systematic construction of straight submanifold scalars.
A special case of these results proves Theorem~\ref{thm:compute-straight}.
We also prove Corollary~\ref{cor:compact-area}.

In Section~\ref{sec:albin} we carefully discuss renormalized integrals on even asymptotically hyperbolic manifolds.
We also study the asymptotics of natural submanifold scalars on conformally compact minimal submanifolds of conformally compact Einstein manifolds.
We use this to prove Theorem~\ref{thm:albin} and the fact that the renormalized curvature integral of a natural divergence is zero.

In Section~\ref{sec:renormalize} we prove Corollary~\ref{cor:renormalized-area}.
We also compute the renormalized extrinsic curvature integral of a straightenable submanifold scalar.

In Section~\ref{sec:applications} we prove Theorem~\ref{thm:rigidity}.

\section{Background}
\label{sec:bg}

In this section we introduce relevant background about (immersed) submanifolds (with multiplicity) of pseudo-Riemannian and conformal manifolds, formulated via immersions.
Our conventions follow Case, Graham, Kuo, Tyrrell, and Waldron~\cite{CGKTW24}.
We also prove two technical results needed in Theorem~\ref{thm:rough-ambient-space}.
The first, stated as Proposition~\ref{prop:natural}, shows that our notion of natural submanifold tensors agrees with other definitions in the literature.
The second, stated as Proposition~\ref{prop:tubular-neighborhood}, identifies certain one-parameter families of immersions with one-parameter families of sections of the normal bundle.
Both results are known in the context of embeddings, but we could not find statements for immersions in the literature.

\subsection{Pseudo-Riemannian manifolds}
\label{subsec:pseudo-riemannian}

In this subsection we introduce some important Riemannian invariants.
The main purpose is to fix our conventions.

A \defn{pseudo-Riemannian manifold} $(M^n,g)$ is a pair of a smooth\footnote{
    By \defn{smooth}, we mean of class $C^\infty$.
}
$n$-manifold $M$ and a smooth section $g$ of $S^2T^\ast M$, called the pseudo-Riemannian metric, such that $g_p$ defines a nondegenerate inner product on $T_pM$ for each $p \in M$.
We say that $(M,g)$ and $g$ are \defn{Riemannian} if $g_p$ is positive definite for each $p \in M$.
With the exception of Section~\ref{sec:applications}, all of the results in this paper hold in general signature.

We perform computations using abstract index notation, using lowercase Latin letters ($a,b,c,\dotsc$) to denote factors of $T^\ast M$ (when subscripts) or $TM$ (when superscripts), and with repeated indices denoting a contraction via the canonical pairing of $TM$ and $T^\ast M$.
For example, we write $T_{abc}$ to denote a section of $\otimes^3 T^\ast M$ and $X^a$ to denote a vector field.
We denote evaluation of $T$ at vector fields $X,Y,Z$ by
\begin{equation*}
 T(X,Y,Z) = X^a Y^b Z^c T_{abc} .
\end{equation*}
We use square brackets and round parentheses to denote skew-symmetrization and symmetrization, respectively. For example,
\begin{align*}
 T_{[abc]} & := \frac{1}{6}(T_{abc}-T_{acb}+T_{bca}-T_{bac}+T_{cab}-T_{cba}) , \\
 T_{(abc)} & := \frac{1}{6}(T_{abc}+T_{acb}+T_{bca}+T_{bac}+T_{cab}+T_{cba}) .
\end{align*}
We use $g_{ab}$ and its inverse $g^{ab}$ to lower and raise indices, respectively.
We denote by $\Rm$ or $R_{abcd}$ the Riemann curvature tensor, defined by the convention
\begin{equation*}
 \nabla_a \nabla_b \tau_c - \nabla_b \nabla_a \tau_c = R_{abc}{}^d\tau_d .
\end{equation*}
If $n \geq 2$, then the \defn{Schouten scalar} is
\begin{equation*}
 J := \frac{R}{2(n-1)} ,
\end{equation*}
where $R := R_a{}^a$ is the scalar curvature and $R_{ab} := R_{acb}{}^c$ is the Ricci tensor.
If $n \geq 3$, then the \defn{Schouten tensor} is
\begin{equation*}
 P_{ab} := \frac{1}{n-2}\left( R_{ab} - J g_{ab} \right) .
\end{equation*}
Note that $J = P_a{}^a$.
The \defn{Kulkarni--Nomizu product} of two symmetric $(0,2)$-tensors $S$ and $T$ is
\begin{equation*}
    (S \wedge T)_{abcd} := 2S_{a[c}T_{d]b} - 2S_{b[c}T_{d]a} .
\end{equation*}
The \defn{Weyl tensor} is $W := \Rm - P \wedge g$.
Equivalently,
\begin{equation*}
 W_{abcd} := R_{abcd} - 2P_{a[c}g_{d]b} + 2P_{b[c}g_{d]a} .
\end{equation*}
The Weyl tensor is conformally invariant: $W^{e^{2u}g} = e^{2u}W^g$.
It vanishes when $n=3$.
If $n \geq 4$, then $(M^n,g)$ is locally conformally flat if and only if $W^g=0$.

Fix nonnegative integers $k,n$ and denote by
\begin{equation*}
    \delta_{b_1 \dotsm b_k}^{a_1 \dotsm a_k} := \delta_{[b_1}^{[a_1} \dotsm \delta_{b_k]}^{a_k]}
\end{equation*}
the identity map on $\Lambda^kT^\ast M$, where $M$ is an $n$-manifold.
Direct calculation yields
\begin{equation}
    \label{eqn:contract-generalized-kronecker}
    \delta_{b_1 \dotsm b_k}^{a_1 \dotsm a_k}\delta_{a_k}^{b_k} = \frac{n-k+1}{k}\delta_{b_1 \dotsm b_{k-1}}^{a_1 \dotsm a_{k-1}}
\end{equation}
on $n$-manifolds.
Given a nonnegative integer $\ell$, define $\Pf_\ell$ on $(2,2)$-tensors $T$ by
\begin{equation*}
    \Pf_\ell(T) := 2^{-\ell}(2\ell-1)!!\delta_{b_1 \dotsm b_{2\ell}}^{a_1 \dotsm a_{2\ell}}T_{a_1a_2}^{b_1b_2} \dotsm T_{a_{2\ell-1}a_{2\ell}}^{b_{2\ell-1}b_{2\ell}} ,
\end{equation*}
with the convention $\Pf_0(T) := 1$.
Here $(2\ell-1)!! := (1)(3)\dotsm(2\ell-1)$, with the convention $(-1)!! := 1$.
The \defn{Pfaffian} of an even-dimensional pseudo-Riemannian manifold $(M^n,g)$ is $\Pf^g := \Pf_{n/2}(\Rm^g)$, where $\Rm_{ab}^{cd} := R_{ab}{}^{cd}$.
Regard $\Pf_\ell$ as a multilinear map via polarization and denote $(g \wedge g)_{ab}^{cd} := (g \wedge g)_{ab}{}^{cd}$.
Equation~\eqref{eqn:contract-generalized-kronecker} implies that if $n$ is even, then
\begin{equation}
    \label{eqn:Pf-with-g}
    \Pf_{n/2}\Bigl( T^{\otimes s} \otimes (g \wedge g)^{\otimes(n/2-s)} \Bigr) = 2^{n/2-s}\binom{n/2}{s}^{-1}(n-2s-1)!!\Pf_s(T) .
\end{equation}

\subsection{Pseudo-Riemannian submanifolds}
\label{subsec:bg/submanifold}

In this subsection we discuss the geometry of submanifolds of pseudo-Riemannian manifolds.
Our main goals are to characterize their local invariants and their one-parameter families.

A \defn{nondegenerate submanifold} is a smooth immersion $j \colon \Sigma^k \to (M^n,g)$ into a pseudo-Riemannian manifold $(M,g)$ such that $k<n$ and $j^\ast g$ defines a pseudo-Riemannian metric.
We do not require that $j$ is injective.
Note that $j^\ast g$ is automatically Riemannian if $g$ is Riemannian.

Let $j \colon \Sigma^k \to (M^n,g)$ be a nondegenerate submanifold and let $\pi \colon E \to M$ be a vector bundle.
We denote by
\begin{equation*}
    j^{-1}E := \left\{ (p,X) \suchthat p \in \Sigma , X \in E_{j(p)} \right\}
\end{equation*}
the \defn{pullback bundle} $\pi \colon j^{-1}E \to \Sigma$ with its canonical smooth structure.
Since $j$ is an immersion, the map $j_\ast \colon T\Sigma \to j^{-1}TM$ defined by
\begin{equation*}
    j_\ast(X_p) := \bigl( p, dj_p(X_p) \bigr)
\end{equation*}
is an injective bundle morphism.
We abuse notation and identify $T\Sigma \cong j_\ast(T\Sigma)$, with the distinction clear by context.
The \defn{normal bundle} is the unique subbundle $N\Sigma \subset j^{-1}TM$ of rank $n-k$ over $\Sigma$ that is $g$-orthogonal to $T\Sigma$.
Hence
\begin{equation*}
    j^{-1}TM = T\Sigma \oplus N\Sigma .
\end{equation*}
Note that $N\Sigma$ depends only on the conformal class $[g]$.
The fibers of $T\Sigma$ and $N\Sigma$ over $p \in \Sigma$ are denoted $T_p\Sigma$ and $N_p\Sigma$, respectively.

Pick local coordinates $(x^\alpha)_{\alpha=1}^k$ for $\Sigma$ and a local frame $(e_{\alpha'})_{\alpha'=k+1}^{n}$ for $N\Sigma$, defined on a common open set $U \subset \Sigma$.
By shrinking $U$ if necessary, we may assume that $j\rvert_U \colon U \to M$ is an embedding.
Define $(z^a)_{a=1}^n = (x^\alpha,u^{\alpha'})$ by
\begin{equation*}
    (x^\alpha,u^{\alpha'}) \mapsto \exp_{j(x^\alpha)}^g \left( \sum_{\alpha'=k+1}^{n} u^{\alpha'}e_{\alpha'} \right) .
\end{equation*}
The Tubular Neighborhood Theorem~\cite{LeeIRM}*{Theorem~5.25} implies that these define a coordinate system, called \defn{Fermi coordinates}, on a neighborhood of $j(U) \subset M$.

Define $j^\ast \colon j^{-1}T^\ast M \to T^\ast\Sigma$ by
\begin{equation*}
    j^\ast(p,\alpha_{j(p)})(X_p) := \alpha_{j(p)}\bigl(dj_p(X_p)\bigr) .
\end{equation*}
The \defn{conormal bundle} is
\begin{equation*}
    N^\ast\Sigma := \ker \left( j^\ast \colon j^{-1}T^\ast M \to T^\ast\Sigma \right) .
\end{equation*}
It is clear that $N^\ast\Sigma$ annihilates $T\Sigma$ via the canonical pairing of $j^{-1}T^\ast M$ and $j^{-1}TM$.
We abuse notation and denote by $T^\ast\Sigma$ the $g$-orthogonal complement of $N^\ast\Sigma \subset j^{-1}T^\ast M$.
Hence
\begin{equation*}
    j^{-1}T^\ast M = T^\ast\Sigma \oplus N^\ast\Sigma .
\end{equation*}
This splitting depends only on the conformal class $[g]$.

The \defn{second fundamental form} is the section $L^g$ of $S^2T^\ast\Sigma \otimes N^\ast\Sigma$ defined by
\begin{equation*}
 L^g(U_p,V_p,\xi_p) = g\bigl( \nabla^g_{U_p} V_p , \xi_p \bigr)
\end{equation*}
for all $U_p , V_p \in T_p\Sigma$ and all $\xi_p \in N_p\Sigma$.
The \defn{mean curvature} is the section $H$ of $N^\ast\Sigma$ determined by $H^g := \frac{1}{k}\tr_{j^\ast g}L^g$.
A nondegenerate submanifold is \defn{minimal} if its mean curvature is zero.

Fix integers $0 \leq k < n$ and $r,s \geq 0$.
A \defn{natural submanifold tensor} of bi-rank $(r,s)$ on $k$-submanifolds of $n$-manifolds is an assignment $T$ to each nondegenerate submanifold $j \colon \Sigma^k \to (M^n,g)$ of a section $T^{j,g}$ of $(T^\ast\Sigma)^{\otimes r} \otimes (N^\ast\Sigma)^{\otimes s}$
that can be universally expressed as an $\mathbb{R}$-linear combination of partial contractions of tensors
\begin{equation}
    \label{eqn:tensor-to-be-contracted}
    \pi_1(\nabla^{I_1} \Rm)\otimes \cdots \otimes \pi_p(\nabla^{I_p} \Rm)\otimes (\overline{\nabla}{}^{J_1} L)\cdots \otimes (\overline{\nabla}{}^{J_q} L)\otimes\pi(g^{\otimes K}) .
\end{equation}
Here $I_s$, $J_t$, and $K$ denote powers, all factors in~\eqref{eqn:tensor-to-be-contracted} are regarded as covariant, $\pi$ and $\pi_s$ denote projection to either $T^\ast\Sigma$ or $N^\ast\Sigma$ in each index, and $\nabla$ and $\overline{\nabla}$ denote the Levi-Civita connection of $g$ and the induced connections on $T^\ast\Sigma$ and $N^\ast\Sigma$, respectively.
All contractions are performed using the projections of $g^{-1}$ to $S^2T\Sigma$ and $S^2N\Sigma$, as appropriate.
A \defn{natural submanifold scalar} is a natural submanifold tensor of bi-rank $(0,0)$.
A natural submanifold tensor $T$ has \defn{homogeneity} $w \in \mathbb{R}$ if $T^{j,c^2g} = c^{w}T^{j,g}$ for all $c>0$.
For example, as covariant tensors, $L$ and the various projections of $\Rm$ have homogeneity $2$.

Let $j \colon \Sigma \to M$ and $\widehat{\jmath} \colon \hSigma \to \hM$ be smooth maps.
Suppose that $\Psi \colon \Sigma \to \hSigma$ and $\Phi \colon M \to \hM$ are smooth maps such that $\widehat{\jmath} \mathop{\circ} \Psi = \Phi \mathop{\circ} j$.
Then
\begin{equation*}
    (\Psi,\Phi)^\ast\bigl( \Psi(p) , \omega_{\widehat{\jmath}(\Psi(p))} \bigr) := \bigl(p , \Phi_{j(p)}^\ast\omega_{\Phi(j(p))} \bigr)
\end{equation*}
defines a vector bundle homomorphism $(\Psi,\Phi)^\ast \colon \widehat{\jmath}\,^{-1}T^\ast\hM \to j^{-1} T^\ast M$.
Given an integer $r \in \mathbb{N}_0 := \{ 0, 1, 2, \dotsc \}$, extend this to a vector bundle homomorphism
\begin{equation}
    \label{eqn:submanifold-pullback}
    (\Psi,\Phi)^\ast \colon \bigl( \widehat{\jmath}\,^{-1}T^\ast\hM \bigr)^{\otimes r} \to \bigl( j^{-1}T^\ast M \bigr)^{\otimes r}
\end{equation}
by acting factor-wise.
This $(\Psi,\Phi)^\ast$ is the pullback associated to the commutative diagram
\begin{equation*}
    \begin{tikzcd}
        \Sigma \arrow[r,"\Psi"] \arrow[d,"j"] & \widehat{\Sigma} \arrow[d,"\widehat{\jmath}"] \\
        M \arrow[r,"\Phi"] & \widehat{M}.
    \end{tikzcd}
\end{equation*}
These pullbacks allow us to relate our notion of natural submanifold tensors to the usual definition in terms of coordinate charts:

\begin{proposition}
    \label{prop:natural}
    Fix integers $r,s \geq 0$.
    Let $T$ be an assignment to each nondegenerate submanifold $j \colon \Sigma^k \to (M^n,g)$ of a section $T^{j,g}$ of $(T^\ast\Sigma)^{\otimes r} \otimes (N^\ast\Sigma)^{\otimes s}$.
    Then $T$ is a natural submanifold tensor if and only if the following two conditions hold:
    \begin{enumerate}
        \item There are polynomials $\mathcal{P}_{AB'}$ such that if $j \colon \Sigma^k \to (M^n,g)$ is a nondegenerate submanifold and $(z^a)=(x^\alpha,u^{\alpha'})$ are Fermi coordinates around $j(p) \in M$, then
        \begin{equation*}
            T^{j,g}(p) = \mathcal{P}_{AB'}\bigl( h^{\alpha\beta}(p) , h^{\alpha'\beta'}(p) , \partial_{a_1\dotsm a_k}^kg_{bc}(j(p)) \bigr) \, dx^A \, du^{B'} ,
        \end{equation*}
        where $h^{\alpha\beta}$ and $h^{\alpha'\beta'}$ denote the components of the induced metrics on $T^\ast\Sigma$ and $N^\ast\Sigma$, respectively, $A \in \{ 1, \dotsc, k \}^r$ and $B' \in \{ k+1 , \dotsc , n \}^{s}$ are multi-indices, and $dx^A := dx^{\alpha_1} \dotsm dx^{\alpha_r}$ and $du^{B'} := du^{\beta_1'} \dotsm du^{\beta_s'}$.
        \item If $j \colon \Sigma^k \to (M^n,g)$ is a nondegenerate submanifold and if $\Psi \colon \Sigma \to \widehat{\Sigma}$ and $\Phi \colon M \to \widehat{M}$ are diffeomorphisms, then
        \begin{equation*}
            (\Psi,\Phi)^\ast\bigl( T^{\hj , (\Phi^{-1})^\ast g} \bigr) = T^{j,g}
        \end{equation*}
        for $\hj := \Phi \circ j \circ \Psi^{-1} \colon \hSigma \to \hM$.
    \end{enumerate}
\end{proposition}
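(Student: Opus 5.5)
Both directions rest on Fermi coordinates. They are canonically attached to $j$, the point $p$, and an orthonormal-type frame of $N\Sigma$, so we can read off components of natural tensors there.

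\emph{Necessity.} Suppose $T$ is natural, so it is a universal linear combination of partial contractions of the tensors~\eqref{eqn:tensor-to-be-contracted}. Condition (2) follows because every ingredient is equivariant under the pair $(\Psi,\Phi)$. The metric $(\Phi^{-1})^\ast g$ has Levi-Civita connection and curvature equal to the pushforwards of those of $g$. The induced connections $\onabla$ on $T^\ast\Sigma$ and $N^\ast\Sigma$, the splitting $j^{-1}T^\ast M = T^\ast\Sigma\oplus N^\ast\Sigma$, and the second fundamental form $L$ are all defined invariantly from $(j,g)$. Hence $(\Psi,\Phi)^\ast$ intertwines them, and it commutes with projections and contractions.

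For condition (1), I work in Fermi coordinates at $j(p)$. By construction $\partial_{u^{\alpha'}}$ restricted to $j(U)$ is the normal frame $e_{\alpha'}$, so $dx^\alpha$ and $du^{\alpha'}$ at $p$ span $T^\ast_p\Sigma$ and $N^\ast_p\Sigma$ respectively. The components of $\nabla^I\Rm$ at $j(p)$ are polynomials in $\partial^m g_{bc}(j(p))$ and $g^{bc}(j(p))$. Since $g^{-1}$ is block diagonal at $j(p)$, $g^{bc}(j(p))$ is given by $h^{\alpha\beta}$ and $h^{\alpha'\beta'}$. The components of $L$ are $\Gamma^{\gamma'}_{\alpha\beta}$ lowered. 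Covariant derivatives $\onabla^J L$ involve tangential derivatives of Christoffel symbols together with the induced normal connection coefficients, and these are again Christoffel symbols of $g$ at points of $j(U)$. Differentiating in $x^\alpha$ along $u=0$ yields further partial derivatives of $g$ at $j(p)$. The projections $\pi$ become index restrictions, and the contractions use $h^{\alpha\beta}$ or $h^{\alpha'\beta'}$. This produces the polynomials $\mathcal{P}_{AB'}$, which are independent of $(j,g)$ because the expression is universal.

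\emph{Sufficiency.} Suppose (1) and (2) hold. I follow the classical argument of Epstein/Stredder for natural Riemannian tensors, adapted to submanifolds. The key step, and the main obstacle, is to replace Fermi coordinates by a normalized version and to show that the jets of $g$ at $j(p)$ in these coordinates are determined polynomially by the tensors $\pi(\nabla^I\Rm)$ and $\onabla^J L$ at $p$. To this end, I choose Riemannian normal coordinates on $\Sigma$ (for $j^\ast g$) composed with normal exponentials, and a parallel normal frame along radial geodesics in $\Sigma$. Then, by the submanifold analogue of the normal-coordinate expansion (proved by induction on order, via Jacobi-type equations along the radial geodesics and the normal geodesics), each $\partial^m g_{bc}(j(p))$ becomes a universal polynomial in $L$, $\Rm$ and their covariant derivatives at $p$.

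Condition (2) guarantees that $T$ does not depend on the choice of frame or coordinates. Together with (1), $T^{j,g}(p)$ is therefore a polynomial in these invariant quantities, with coefficients invariant under the residual group $O(k)\times O(n-k)$ (or the appropriate signature versions) acting on the frame. Weyl's invariant theory for the orthogonal groups then shows that such equivariant polynomials are linear combinations of contractions using $h^{\alpha\beta}$, $h^{\alpha'\beta'}$ and tensor products with $g$. This is exactly the form~\eqref{eqn:tensor-to-be-contracted}.

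Immersions cause no difficulty in either direction, since everything is local and we may shrink $U$ so that $j|_U$ is an embedding.
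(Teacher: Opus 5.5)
Your necessity argument is essentially the paper's. For sufficiency you take a genuinely different route. The paper runs no invariant-theory argument itself: it restricts to embedded submanifolds, uses (2) only to get invariance under isometries $\Phi\colon(M,g)\to(\hM,\hg)$, and then cites Theorem~1.3 of Graham and Kuo. That theorem is the submanifold analogue of the Epstein--Stredder theorem you are reconstructing. The paper therefore gains brevity at the cost of a black box. Your outline explains why the result holds, but its two main steps are only sketched, and they are exactly the content of the cited theorem.

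First, expanding the jets $\partial^m g_{bc}(j(p))$ in your adapted coordinates naturally produces the intrinsic curvature of $j^\ast g$, the normal curvature, and $\onabla$-derivatives of projections of $\nabla^I\Rm$. To reduce to the allowed building blocks $\pi(\nabla^I\Rm)$ and $\onabla^J L$, you must eliminate these using the Gauss and Ricci equations. You also need the identity writing $\onabla$ of a projected ambient tensor as a projection of its ambient derivative plus $L$-terms.

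Second, the composite of $\mathcal{P}_{AB'}$ with this expansion is only known to be $O(k)\times O(n-k)$-equivariant on realizable curvature data. Such data satisfy Bianchi, Codazzi and commutator identities, so Weyl's first fundamental theorem does not apply verbatim. You should first average the polynomial over the compact group; the average agrees with it on realizable data. In indefinite signature you need the algebraic (complexified) version of this step.

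Finally, the equivariance itself comes from (1) rather than (2). Rotating the orthonormal bases at $p$ changes your adapted coordinates by a linear map $(x,u)\mapsto(Ax,Bu)$. The rotated coordinates are again Fermi coordinates, so (1) applies in both systems.
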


\begin{proof}
    Suppose first that $T$ is a natural submanifold tensor.
    Let $j \colon \Sigma^k \to (M^n,g)$ be a nondegenerate submanifold and let $p \in \Sigma$.
    Pick Fermi coordinates $(x^\alpha,u^{\alpha'})$ around $j(p)$.
    Then $j(x) = (x,0)$, and hence the components of $j^\ast g$ are $h_{\alpha\beta} := g_{\alpha\beta} \circ j$.
    Moreover, $g_{\alpha\alpha'}\circ j = 0$, and $h_{\alpha'\beta'} := g_{\alpha'\beta'} \circ j$ is the induced metric on $N\Sigma$.
    Direct computation implies that the $g$-orthogonal projection onto $T^\ast\Sigma$ is
    \begin{equation*}
        \Pi_{T^\ast\Sigma}(\omega_a \, dz^a) := \omega_{\alpha} \, dx^{\alpha} .
    \end{equation*}
    The $g$-orthogonal projection onto $N^\ast\Sigma$ is $\Pi_{N^\ast\Sigma} := \Id - \Pi_{T^\ast\Sigma}$.
    Since the induced connections on $T^\ast\Sigma$ and $N^\ast\Sigma$ are obtained from the Levi-Civita connection of $g$ and projection, we deduce from the standard coordinate formulas for the Levi-Civita connection and Riemann curvature tensor of $g$ that $T^{j,g}$ satisfies Property~(1).
    It is straightforward to check that if $\Psi \colon \Sigma \to \hSigma$ and $\Phi \colon M \to \hM$ are diffeomorphisms, then the map
    \begin{equation*}
        (\Psi , \Phi)_\ast \colon j^{-1}TM \to \hj^{-1}T\hM 
    \end{equation*}
    defined by
    \begin{equation*}
        (\Psi , \Phi)_\ast\bigl( p, X_{j(p)} \bigr) := \bigl( \Psi(p) , d\Phi_{j(p)}(X_{j(p)}) \bigr)
    \end{equation*}
    is a vector bundle isomorphism and, moreover, that $(\Psi , \Phi)_\ast(T\Sigma) = T\hSigma$.
    Combining this with the naturality of the Levi-Civita connection implies that $T$ satisfies Property~(2).

    Suppose next that $T$ satisfies Properties~(1) and~(2).
    Let $\Sigma^k \subset M^n$ be an embedded submanifold and denote by $j \colon \Sigma \to M$ the canonical inclusion.
    Suppose that $\Phi \colon (M,g) \to (\hM,\hg)$ is an isometry.
    Set $\hSigma := \Phi(\Sigma)$ and let $\hj \colon \hSigma \to \hM$ be the canonical inclusion.
    Since $T$ satisfies Property~(2), we see that
    \begin{equation*}
        T^{j,g} = \Phi^\ast T^{\hj,\hg} .
    \end{equation*}
    Property~(1) implies that $T$ is a natural submanifold tensor~\cite{GrahamKuo2024}*{Theorem~1.3}.
\end{proof}
 
We frequently use abstract index notation to compute with natural submanifold tensors.
In this context, we use lowercase Latin letters ($a,b,c,\dotsc$) to label sections of $j^{-1} TM$ or its dual, we use lowercase Greek letters ($\alpha,\beta,\gamma,\dotsc$) to label sections of $T\Sigma$ or its dual, and we use primed lowercase Greek letters ($\alpha',\beta',\gamma',\dotsc$) to label sections of $N\Sigma$ or its dual.
For example, $L_{\alpha\beta\alpha'}$ and $H_{\alpha'}$ denote the second fundamental form and mean curvature, respectively.
We also use lowercase Greek indices, unprimed and primed, to denote projections from $j^{-1} TM$ to $T\Sigma$ or $N\Sigma$, respectively.
For example, the Gauss equation~\cite{DajczerTojeiro2019}*{Section~1.3} is
\begin{equation}
    \label{eqn:gauss}
    R_{\alpha\beta\gamma\delta} = \overline{R}_{\alpha\beta\gamma\delta} - L_{\alpha\gamma\alpha'}L_{\beta\delta}{}^{\alpha'} + L_{\alpha\delta\alpha'}L_{\beta\gamma}{}^{\alpha'} ,
\end{equation}
where $\overline{R}_{\alpha\beta\gamma\delta}$ denotes the curvature of the induced connection on $T\Sigma$.
More generally, we use bars to denote intrinsic Riemannian invariants of $(\Sigma,j^\ast g)$;
e.g.\ if $k \geq 3$, then $\overline{P}_{\alpha\beta}$ denotes the Schouten tensor of $j^\ast g$.
Our definition of the second fundamental form is such that if $u \in C^\infty(M)$, then
\begin{equation*}
    \nabla_\alpha\nabla_\beta u = \onabla_\alpha \onabla_\beta u - L_{\alpha\beta\alpha'}\nabla^{\alpha'}u .
\end{equation*}
In particular, with our convention $\Delta := -\nabla^a\nabla_a$, it holds that
\begin{equation*}
    -\nabla^\alpha \nabla_\alpha u = \overline{\Delta} u + kH^{\alpha'}\nabla_{\alpha'}u .
\end{equation*}

We conclude this subsection with a technical result that allows us to express a one-parameter family of submanifolds in terms of a section of the normal bundle.
To that end, given a nondegenerate submanifold $j \colon \Sigma^k \to (M^n,g)$, denote by $\exp^\perp \colon N\Sigma \to M$ the \defn{normal exponential map}, defined by
\begin{equation}
    \label{eqn:normal-exponential-map}
    \exp^\perp \xi := \exp_{j(p)}^g \xi
\end{equation}
for any $\xi \in N_p\Sigma$.
The Tubular Neighborhood Theorem immediately gives the correspondence between variations of embeddings and sections of the normal bundle, as has been used for previous holographic constructions of conformal submanifold tensors (e.g.\ \cites{GrahamWitten1999,GrahamReichert2020,CGK23}).
For one-parameter families of submanifolds, one can locally apply the Tubular Neighborhood Theorem, using a fixed member of the family to take the inverse of the normal exponential map.

\begin{proposition}
    \label{prop:tubular-neighborhood}
    Let $j \colon \Sigma^k \to (M^n,g)$ be a nondegenerate submanifold.
    Let $I$ be an interval containing $0$, set $\cM := M \times I$, and denote by $\Pi_2 \colon \cM \to I$ the canonical projection.
    Suppose that there is an embedding $\iota \colon \Sigma \to \cSigma^{k+1}$ and an immersion $\cjmath \colon \cSigma \to \cM$ such that $d(\Pi_2 \circ \cjmath)$ is nowhere-vanishing and
    \begin{equation*}
        \begin{tikzcd}
            \Sigma \arrow[r,"j"] \arrow[d,"\iota"] & M \arrow[d,"\iota"] \\
            \cSigma \arrow[r,"\cjmath"] & \cM
        \end{tikzcd}
    \end{equation*}
    commutes, where $\iota \colon M \to \cM$ is the inclusion $\iota(x) := (x,0)$.
    Then there are
    \begin{enumerate}
        \item an open set $\cU \subset \Sigma \times I$ containing $\Sigma \times \{ 0 \}$,
        \item a smooth map $\Phi \colon \cU \to \cSigma$ that is a diffeomorphism onto its image and satisfies $\Phi(x,0) = \iota(x)$ for all $x \in \Sigma$, and
        \item a smooth map $\xi \colon \cU \to N\Sigma$ such that $\pi \circ \xi = \Pi_1$ and
        \begin{equation*}
            ( \cjmath \circ \Phi )(x,\rho) = \bigl( \exp^\perp \xi(x,\rho) , \rho \bigr)
        \end{equation*}
        for all $(x,\rho) \in \cU$, where $\Pi_1 \colon \cU \to \Sigma$ is the canonical projection.
    \end{enumerate}
\end{proposition}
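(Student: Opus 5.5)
The plan is to reduce the statement to the Tubular Neighborhood Theorem for the immersion $j$, applied locally and then patched. The patching works because the construction on each piece is canonical: it is the inverse of the normal exponential map composed with a level-set parametrization.

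First I straighten $\cSigma$ along the $\rho$-direction. Since $\Pi_2\circ\cjmath$ has nowhere-vanishing differential and vanishes on $\iota(\Sigma)$, its level sets $\cSigma_\rho := (\Pi_2\circ\cjmath)^{-1}(\rho)$ are hypersurfaces of $\cSigma$ near $\iota(\Sigma)$. Also, $\iota(\Sigma)\subset\cSigma_0$ is an open subset, since both have dimension $k$. Choose a vector field $X$ on $\cSigma$ with $d(\Pi_2\circ\cjmath)(X)=1$, for instance $X=\nabla f/\lvert\nabla f\rvert^2$ for an auxiliary Riemannian metric on $\cSigma$ and $f=\Pi_2\circ\cjmath$. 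Then set $\Psi(x,\rho):=\mathrm{Fl}^X_\rho(\iota(x))$ on the maximal open domain in $\Sigma\times I$ where the flow exists. By construction $\Pi_2\circ\cjmath\circ\Psi(x,\rho)=\rho$ and $\Psi(x,0)=\iota(x)$. The differential of $\Psi$ at $\rho=0$ is an isomorphism, because $X$ is transverse to $\iota(\Sigma)$. Since $\iota$ is an embedding, a standard argument shows $\Psi$ is injective on a neighborhood of $\Sigma\times\{0\}$. The argument runs as follows: $\Psi$ is injective on $\Sigma\times\{0\}$ and a local diffeomorphism there, so one shrinks to a neighborhood of the form $\{\lvert\rho\rvert<\delta(x)\}$ with $\delta$ continuous, exactly as in the usual proof of the tubular neighborhood theorem. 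Write $\cjmath\circ\Psi(x,\rho)=(J_\rho(x),\rho)$, where $J_\rho\colon\Sigma\to M$ is a smooth family with $J_0=j$.

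Second, I produce $\xi$ using the normal exponential map of $j$. The map $\exp^\perp\colon N\Sigma\to M$ has bijective differential along the zero section, since $d\exp^\perp_{0_p}(v,\eta)=dj_p(v)+\eta$. Hence $F\colon N\Sigma\to\Sigma\times M$, $F(\eta):=(\pi(\eta),\exp^\perp\eta)$, is an injective immersion near the zero section onto a neighborhood of the graph of $j$; injectivity there follows because $\pi$ is recorded in the first factor. However, $J_\rho(x)$ close to $j(x)$ does not by itself determine a normal vector based at $x$: the natural inverse of $\exp^\perp$ would produce a normal vector based at some nearby point $y$, and then a reparametrization of $\Sigma$ is needed. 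So I instead solve $J_\rho(x)=\exp^\perp\eta$ with $\eta\in N_y\Sigma$ for the pair $(y,\eta)$, using the inverse function theorem for $\exp^\perp$ near $0_x$ restricted to a neighborhood $V\ni x$ on which $j$ is an embedding. This defines a smooth map $(x,\rho)\mapsto(\sigma(x,\rho),\eta(x,\rho))$ with $\sigma(x,0)=x$ and $\eta(x,0)=0$. Uniqueness of the local inverse on small neighborhoods makes the local definitions agree on overlaps, so $\sigma$ and $\eta$ are globally defined and smooth near $\Sigma\times\{0\}$.

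Third, I reparametrize. The map $(x,\rho)\mapsto(\sigma(x,\rho),\rho)$ equals the identity at $\rho=0$, so it is a local diffeomorphism near $\Sigma\times\{0\}$. By the same injectivity-by-shrinking argument it is a diffeomorphism from an open set $\cU'$ onto an open set $\cU\supset\Sigma\times\{0\}$. Let $(y,\rho)\mapsto(\tau(y,\rho),\rho)$ denote the inverse. Define $\Phi(y,\rho):=\Psi(\tau(y,\rho),\rho)$ and $\xi(y,\rho):=\eta(\tau(y,\rho),\rho)\in N_y\Sigma$. Then $\pi\circ\xi=\Pi_1$, and $\Phi(y,0)=\iota(y)$. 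Moreover $\cjmath\circ\Phi(y,\rho)=(\exp^\perp\xi(y,\rho),\rho)$, and $\Phi$ is a diffeomorphism onto its image, being a composition of two such maps.

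The main obstacle is the globalization for a non-injective $j$. Every inverse-function step is only local, and the resulting neighborhoods must be shrunk to nonuniform ones of the form $\{\lvert\rho\rvert<\delta(x)\}$ in order to keep injectivity. I would handle this uniformly with a lemma: a local diffeomorphism $G$ defined near $\Sigma\times\{0\}$ that restricts to an embedding on $\Sigma\times\{0\}$ is injective on some neighborhood of $\Sigma\times\{0\}$. The proof is by contradiction, taking sequences $(x_i,\rho_i)\neq(x_i',\rho_i')$ with equal images and $\rho_i,\rho_i'\to0$, combined with a locally finite exhaustion of $\Sigma$ to produce the continuous function $\delta$.
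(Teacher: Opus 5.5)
Your proposal is correct and follows essentially the same route as the paper: the paper flows out from $\iota(\Sigma)$ along the same field $\widetilde{X} = (d\crho)^\sharp/h(d\crho,d\crho)$, with $h$ an auxiliary metric, and cites Lee's Flowout Theorem for the injectivity you propose to prove by hand. It then glues local inverses of $\exp^\perp$, taken over neighborhoods where $j$ is an embedding, into a map $\zeta$ whose base point $\pi\circ\zeta$ is your $\sigma$, and inverts $(\pi\circ\zeta)\times\Pi_2$ exactly as in your third step. The only slip is your discarded aside that $\eta \mapsto (\pi(\eta),\exp^\perp\eta)$ maps onto a neighborhood of the graph of $j$; this is impossible since $\dim N\Sigma = n < n+k$, but nothing later in your argument uses it.
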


\begin{proof}
 Set $\crho := \Pi_2 \circ \cjmath$.
 Pick an auxiliary Riemannian metric $h$ on $\cSigma$.
 On the one hand, the assumption that $\cjmath \circ \iota = \iota \circ j$ implies that $\iota(\Sigma) \subset \crho^{-1}(\{0\})$.
 On the other hand, the assumption that $d\crho$ is nowhere-vanishing implies that the vector field
 \begin{equation*}
  \widetilde{X} := \frac{1}{h(d\crho,d\crho)}(d\crho)^{\sharp}
 \end{equation*}
 is globally defined on $\cSigma$, where $(d\crho)^{\sharp}$ is the vector field on $\cSigma$ dual to $d\crho$ with respect to $h$.
 Observe that if $\cgamma$ is an integral curve of $\widetilde{X}$, then
 \begin{equation*}
  \frac{d}{dt} d\crho\bigl(\cgamma(t)\bigr) = d\crho\bigl(\widetilde{X}_{\cgamma(t)}\bigr) = 1 .
 \end{equation*}
 Applying the Flowout Theorem~\cite{LeeISM}*{Theorem~9.20(d)} to $\widetilde{X}$ along $\iota(\Sigma)$ yields neighborhoods $\cV' \subset \Sigma \times I$ and $\cW \subset \cSigma$ of $\Sigma \times \{ 0 \}$ and $\iota(\Sigma)$, respectively, and a diffeomorphism $\Psi \colon \cV' \to \cW$, such that $(\crho \circ \Psi)(x,\rho) = \rho$ and $\Psi(x,0) = \iota(x)$ for all $x \in \Sigma$ and all $(x,\rho) \in \cV'$.
 Define $J \colon \cV' \to M$ by
 \begin{equation*}
  (\cjmath \circ \Psi)(x,\rho) = \bigl( J(x,\rho) , \rho \bigr) .
 \end{equation*}
 Then $J(x,0) = j(x)$ for all $x \in \Sigma$.
 
 Since $j \colon \Sigma \to (M,g)$ is locally an embedding, for each $p \in \Sigma$ the Tubular Neighborhood Theorem~\cite{LeeIRM}*{Theorem~5.25} produces a neighborhood $\mN_p \subset N\Sigma$ of $0_p$ and a neighborhood $W_p \subset M$ of $j(p)$ such that each fiber of $\mN_p$ is starshaped about $0$ and the normal exponential map restricts to a diffeomorphism $\exp^\perp \rvert_{\mN_p} \colon \mN_p \to W_p$.
 By introducing an auxiliary Riemannian metric and picking balls of half radius (cf.\ \cite{LeeIRM}*{p.\ 135}), for each $p \in \Sigma$ we may pick neighborhoods $\mN_p' \subset \mN_p$ and $W_p' \subset W_p$ of $p$ and $j(p)$, respectively, such that $\exp^\perp\rv_{\mN_p'} \colon \mN_p' \to W_p'$ is a diffeomorphism, each fiber of $\mN_p'$ is starshaped about $0$, and $\mN_p' \subset \mN_{q}$ whenever $\mN_p' \cap \mN_q' \not= \emptyset$.
 By shrinking each $\mN_p'$ if necessary, we may also assume that if $J(x,\rho) \in W_p'$, then $J(x,0) \in W_p'$.
 In particular, if $J(x,\rho) \in W_p' \cap W_q'$, then $\mN_p' \cap \mN_q' \not= \emptyset$.
 
 Given $p \in \Sigma$, set
 \begin{equation*}
  \cV_p := ( J \rv_{\cV' \cap ( \pi(\mN_p') \times I )})^{-1}( W_p' ) .
 \end{equation*}
 Then $\cV_p \subset \cV'$ is a neighborhood of $\pi(\mN_p') \times \{ 0 \}$.
 Define $\zeta_p \colon \cV_p \to N\Sigma$ by
 \begin{equation*}
  \zeta_p := ( \exp^\perp \rv_{\mN_p'} )^{-1} \circ J .
 \end{equation*}
 Clearly $\zeta_p$ is smooth.
 Suppose that $(x,\rho) \in \cV_p \cap \cV_q$ for some $p,q \in \Sigma$.
 Then $J(x,\rho) \in W_p' \cap W_q'$.
 Therefore $\zeta_p(x,\rho),\zeta_q(x,\rho) \in \mN_p \cap \mN_q$ have the same image under $\exp^\perp$.
 Since $\exp^\perp\rv_{\mN_p}$ and $\exp^\perp\rv_{\mN_q}$ are injective, we deduce that $\zeta_p = \zeta_q$ on $\cV_p \cap \cV_q$.
 We may thus glue the maps $\zeta_p$ to define a smooth map $\zeta \colon \cV \to N\Sigma$ on $\cV := \bigcup \cV_p$.
 By construction,
 \begin{equation*}
  (\cjmath \circ \Psi)(x,\rho) = \bigl( \exp^\perp \zeta(x,\rho) , \rho \bigr)
 \end{equation*}
 for all $(x,\rho) \in \cV$.
 
 Finally, consider the smooth map $F := (\pi \circ \zeta) \times \Pi_2 \colon \cV \to \Sigma \times I$;
 i.e.
 \begin{equation*}
  F(x,\rho) := \bigl( (\pi \circ \zeta)(x,\rho) , \rho \bigr) .
 \end{equation*}
 Note that $F$ restricts to the identity on $\Sigma \times \{ 0 \}$.
 It readily follows that $dF \rv_{(x,0)}$ is invertible for all $x \in \Sigma$.
 Hence, by shrinking $\cV$ if necessary, we may assume that $\cV \subset \Sigma \times I$ is an open neighborhood of $\Sigma \times \{ 0 \}$ and that $F$ is a diffeomorphism onto its image.
 Set $\cU := F(\cV)$ and $\Phi := \Psi \circ F^{-1}$ and $\xi := \zeta \circ F^{-1}$.
 Then $\Phi$ and $\xi$ are the desired maps.
\end{proof}

\subsection{Conformal submanifolds}

In this subsection we discuss submanifolds of conformal manifolds.
The key objectives are to introduce two types of local invariants of such spaces, one which depends on a choice of metric for the induced conformal structure on the submanifold and one which does not, and to define some important examples of these invariants.

A \defn{conformal manifold} is a pair $(M^n,\mathfrak{c})$ of a smooth $n$-manifold and a conformal class $\mathfrak{c}$;
i.e.\ an equivalence class of pseudo-Riemannian metrics on $M$ with respect to the relation $g \sim g'$ if and only if $g' = e^{2u}g$ for some $u \in C^\infty(M)$.

A \defn{conformal submanifold} is a smooth immersion $j \colon \Sigma^k \to (M^n,\mathfrak{c})$ from a smooth manifold $\Sigma$ to a conformal manifold $(M,\mathfrak{c})$ such that $j \colon \Sigma \to (M,g)$ is a nondegenerate submanifold for some, and hence any, $g \in \mathfrak{c}$.
We denote by $j^\ast\mathfrak{c}$ the induced conformal structure on $\Sigma$;
i.e.\ $j^\ast\mathfrak{c} := [j^\ast g]$ for some, and hence any, $g \in \mathfrak{c}$.
If $h \in j^\ast\mathfrak{c}$, then locally we may choose $g \in \mathfrak{c}$ such that $h = j^\ast g$.
In this case we call $g$ a \defn{local extension} of $h$;
we call $g$ a \defn{global extension} if it is defined on all of $M$.
Note that $g$ is not uniquely determined and, unless $j$ is an embedding, $g$ may not be globally defined.

A \defn{conformal submanifold tensor} of rank $(r,s)$ on $k$-submanifolds of $n$-manifolds is a natural submanifold tensor $T$ of bi-rank $(r,s)$ for which there is a $w \in \mathbb{R}$ such that
\begin{equation*}
 T^{j,e^{2\Upsilon}g} = e^{wj^\ast\Upsilon} T^{j,g}
\end{equation*}
for all nondegenerate submanifolds $j \colon \Sigma^k \to (M^n,g)$ and all $\Upsilon \in C^\infty(M)$.
In this case we call $w$ the \defn{weight} of $T$.
A \defn{conformal submanifold scalar} is a conformal submanifold tensor of bi-rank $(0,0)$.

Fundamental examples of conformal submanifold tensors are the various projections of the restriction of the Weyl tensor of $(M,g)$ to $\Sigma$ and the trace-free part $\intl_{\alpha\beta\gamma'} := L_{\alpha\beta\gamma'} - H_{\gamma'}g_{\alpha\beta}$ of the second fundamental form.
Denote
\begin{align*}
    \intl^2_{\alpha\beta} & := \intl_{\alpha\gamma\gamma'}\intl_\beta{}^{\gamma\gamma'} , \\
    \lvert \intl \rvert^2 & := \intl_{\alpha\beta\gamma'}\intl^{\alpha\beta\gamma'} ,
\end{align*}
both of which are conformal submanifold tensors.
Two other examples of conformal submanifold tensors are the \defn{Fialkow scalar}
\begin{equation*}
    G := \frac{1}{2(k-1)}\left( \lvert\intl\rvert^2 - W_{\alpha\beta}{}^{\alpha\beta} \right) ,
\end{equation*}
defined when $k \geq 2$, and the \defn{Fialkow tensor}
\begin{equation*}
    F_{\alpha\beta} := \frac{1}{k-2} \left( \intl^2_{\alpha\beta} - W_{\alpha\gamma\beta}{}^\gamma - Gg_{\alpha\beta} \right) , 
\end{equation*}
defined when $k \geq 3$.
Note that $G = \tr_{j^\ast g}F$ when $k \geq 3$.
These are related to the pullback $W_{\alpha\beta\gamma\delta}$ to $\Sigma$ of the Weyl tensor of $g$ and the intrinsic Weyl tensor $\overline{W}_{\alpha\beta\gamma\delta}$ of $j^\ast g$ by the Gauss equation
\begin{equation}
    \label{Gausseun}
    W_{\alpha\beta\gamma\delta} = \overline{W}_{\alpha\beta\gamma\delta} - \intl_{\alpha\gamma\alpha'}\intl_{\beta\delta}{}^{\alpha'} + \intl_{\alpha\delta\alpha'}\intl_{\beta\gamma}{}^{\alpha'} - 2F_{\alpha[\gamma}g_{\delta]\beta} + 2F_{\beta[\gamma}g_{\delta]\alpha} .
\end{equation}

A natural submanifold tensor $T$ of bi-rank $(r,s)$ on $k$-submanifolds of $n$-manifolds is an \defn{extrinsic tensor invariant} if $T^{j,g_1} = T^{j,g_2}$ for every conformal submanifold $j \colon \Sigma^k \to (M^n,\kc)$ and every pair $g_1,g_2 \in \kc$ such that $j^\ast g_1 = j^\ast g_2$.
Since natural submanifold tensors are locally defined, an extrinsic tensor invariant defines an assignment $T$ to each conformal submanifold $j \colon \Sigma^k \to (M^n,\kc)$ and each metric $h \in j^\ast\kc$ of a section $T^h$ of $(T^\ast\Sigma)^{\otimes r} \otimes (N^\ast\Sigma)^{\otimes s}$ by the formula
\begin{equation*}
    T^h := T^{j,g} ,
\end{equation*}
where $g$ is a local extension of $h$.
For example, conformal submanifold tensors are extrinsic tensor invariants, but the mean curvature is not.
An \defn{extrinsic scalar invariant} is an extrinsic tensor invariant of bi-rank $(0,0)$.

A fundamental example of an extrinsic tensor invariant that is not conformally invariant is the extrinsic Schouten tensor
\begin{equation}
    \label{eqn:defn-mP}
    \mathcal{P}_{\alpha\beta} := P_{\alpha\beta} + H^{\alpha'}\mathring{L}_{\alpha\beta\alpha'} + \frac{1}{2}H^{\alpha'}H_{\alpha'}g_{\alpha\beta} .
\end{equation}
This tensor and its properties were first described by Case, Graham, Kuo, Tyrrell, and Waldron~\cite{CGKTW24}*{Lemma~4.1}, though a variant involving an intrinsic tensor was first introduced by Blitz, Gover, and Waldron~\cite{BlitzGoverWaldron2024}*{Lemma~6.1}.
Notably, when $n \geq 3$ the Gauss equation~\eqref{eqn:gauss} implies~\cite{CGKTW24}*{Equation~(4.9b)} that
\begin{equation*}
    \overline{P}_{\alpha\beta} = \mathcal{P}_{\alpha\beta} - F_{\alpha\beta} .
\end{equation*}

\section{The extrinsic ambient space}
\label{sec:ambient}

The (Fefferman--Graham) ambient space~\cite{FeffermanGraham2012} is a formally Ricci flat $(n+2)$-manifold canonically associated to a conformal $n$-manifold.
In this section, we give a direct construction of the extrinsic ambient space for submanifolds of conformal manifolds, originally due to Case, Graham, and Kuo~\cite{CGK23}*{Section~6}.
In so doing, we clarify the ambiguities of the extrinsic ambient space.

We begin with a quick review of the ambient space.
Let $(M,\mathfrak{c})$ be a conformal manifold.
Consider the \defn{metric bundle}
\begin{equation*}
    \mG := \left\{ (x,g_x) \suchthat x \in M , g \in \kc \right\} \subset S^2T^\ast M .
\end{equation*}
This is a principal $\mathbb{R}_+$-bundle with projection $\pi \colon \mG \to M$, $\pi(x,g_x) := x$, and dilations $\delta_s \colon \mG \to \mG$, $\delta_s(x,g_x) := (x,s^2g_x)$ for $s \in \bR_+ := (0,\infty)$.
Define the tautological section $\boldsymbol{g}$ of $S^2T^\ast\mG$ by
\begin{equation*}
    \boldsymbol{g}(X,Y) := g_x(\pi_\ast X , \pi_\ast Y)
\end{equation*}
for all $X,Y \in T_{(x,g_x)}\mG$.
Note that $\delta_s^\ast\boldsymbol{g} = s^2\boldsymbol{g}$ for all $s>0$.

Define dilations $\cdelta_s \colon \mG \times \bR \to \mG \times \bR$ by $\cdelta_s(z,\rho) := (\delta_s(z),\rho)$.
A \defn{pre-ambient space} $(\cmG,\cg)$ for $(M^n,\kc)$ is a $\cdelta_s$-invariant open neighborhood $\cmG \subset \mG \times \bR$ of $\mG \times \{ 0 \}$ together with a pseudo-Riemannian metric $\cg$ on $\cmG$ such that
\newcounter{ambient-counter}
\begin{enumerate}
    \item $\iota^\ast\cg = \boldsymbol{g}$, and
    \item $\cdelta_s^\ast\cg = s^2\cg$ for all $s \in \bR_+$,
    \setcounter{ambient-counter}{\value{enumi}}
\end{enumerate}
where $\iota \colon \mG \to \cmG$ is the inclusion $\iota(z) := (z,0)$.
Note that $\cdelta_s \circ \iota = \iota \circ \delta_s$ for all $s \in \mathbb{R}_+$ and that if $(\cmG,\cg)$ is a pre-ambient space, then so too is $(\widetilde{\mathcal{U}},\cg\rvert_{\widetilde{\mathcal{U}}})$ for any $\cdelta_s$-invariant neighborhood $\widetilde{\mathcal{U}} \subset \cmG$ of $\mG \times \{ 0 \}$.

Let $(\cmG,\cg)$ be a pre-ambient space for $(M^n,\kc)$.
Given a vector bundle $E \to \cmG$, we denote by $O(\rho^m)$ the space of sections $T$ of $E$ such that $\rho^{-m}T$ extends continuously to $\{ \rho = 0 \}$.
Set $O(\rho^\infty) := \bigcap_{m \in \mathbb{Z}} O(\rho^m)$.
We say that $(\cmG,\cg)$ is an \defn{ambient space} if additionally
\begin{enumerate}
    \setcounter{enumi}{\value{ambient-counter}}
    \item $\Ric(\cg) = O^+(\rho^{(n-2)/2})$ if $n \geq 4$ is even, and $\Ric(\cg) = O(\rho^\infty)$ otherwise.
\end{enumerate}
Here $O^+(\rho^m)$ is the subspace of those sections $T \in O(\rho^m)$ of $S^2T^\ast\cmG$ such that if $z = (x,g_x) \in \mG$, then there is a $\tau \in S^2T_x^\ast M$ such that $\iota_z^\ast(\rho^{-m}T) = \pi_z^\ast\tau$ and $\tr_{g_x}\tau = 0$.
Two pre-ambient spaces $(\cmG_i,\cg_i)$, $i \in \{ 1, 2 \}$, for $(M,\kc)$ are \defn{ambient equivalent} if, after shrinking $\cmG_1$ and $\cmG_2$ if necessary, there is a $\cdelta_s$-equivariant\footnote{
    A diffeomorphism $\Phi \colon \cmG_1 \to \cmG_2$ is \defn{$\cdelta_s$-equivariant} if $\Phi \circ \cdelta_s = \cdelta_s \circ \Phi$ for all $s \in \bR_+$.
}
diffeomorphism $\Phi \colon \cmG_1 \to \cmG_2$ such that
\begin{enumerate}
    \item $\Phi \circ \iota_1 = \iota_2$, where $\iota_i \colon \mG \to \cmG_i$ are the canonical inclusions, and
    \item $\Phi^\ast\cg_2 - \cg_1 \in O^+(\rho^{n/2})$ if $n$ is even, and $\Phi^\ast\cg_2 - \cg_1 \in O(\rho^\infty)$ otherwise.
\end{enumerate}
In this case we call $\Phi$ an \defn{ambient equivalence}.
A fundamental result of Fefferman and Graham~\cite{FeffermanGraham2012}*{Theorem~2.3} states that every conformal manifold admits an ambient space and, moreover, it is unique up to ambient equivalence.

We now turn to the extrinsic ambient space.
Let $j \colon \Sigma^k \to (M^n,\kc)$ be a conformal submanifold.
Denote by $\mS$ the metric bundle of $(\Sigma,j^\ast\kc)$ and define the \defn{tautological immersion} $\jmath \colon \mS \to \mG$ by
\begin{equation*}
    \jmath(p,h_p) := \bigl( j(p) , g_{j(p)} \bigr) ,
\end{equation*}
where $g$ is a local extension of $h$.
Note that $\jmath$ is well-defined and $\delta_s$-equivariant.

\begin{definition}
    An \defn{extrinsic pre-ambient space} for a conformal submanifold $j \colon \Sigma^k \to (M^n,\kc)$ is a nondegenerate $\cdelta_s$-equivariant immersion $\cjmath \colon \cmS \to (\cmG,\cg)$ such that
    \begin{enumerate}
        \item $(\cmG,\cg)$ is a pre-ambient space for $(M,\kc)$,
        \item $(\cmS,\cjmath^\ast\cg)$ is a pre-ambient space for $(\Sigma,j^\ast\kc)$, and
        \item $\cjmath \circ \iota = \iota \circ \jmath$, where $\iota$ is the appropriate canonical inclusion.
    \end{enumerate}
\end{definition}

That is, an extrinsic pre-ambient space is a nondegenerate $\cdelta_s$-equivariant immersion built from pre-ambient spaces and for which the diagram
\begin{equation*}
    \begin{tikzcd}
        \mS \arrow[r,"\jmath"] \arrow[d,"\iota"] & \mG \arrow[d,"\iota"] \\
        \cmS \arrow[r,"\widetilde{\jmath}"] & \cmG
    \end{tikzcd}
\end{equation*}
commutes.

\begin{definition}
    An \defn{extrinsic ambient space} is an extrinsic pre-ambient space $\cjmath \colon \cmS \to (\cmG,\cg)$ for a conformal submanifold $j \colon \Sigma^k \to (M^n,\kc)$ such that $(\cmG,\cg)$ is an ambient space for $(M,\kc)$ and the mean curvature vector $\cH$ of $\widetilde{\jmath}$ satisfies
    \begin{enumerate}
        \item $\cH = O(\rho^{k/2})$, if $k$ is even, and
        \item $\cH = O(\rho^\infty)$, if $k$ is odd.
    \end{enumerate}
\end{definition}

We emphasize that, because of the Gauss equations, $(\cmS,\cjmath^\ast\cg)$ may not be formally Ricci flat.
Hence $(\widetilde{S},\cjmath^\ast\widetilde{g})$ need not be an ambient space for $(\Sigma,j^\ast\kc)$.
Also, while the dimensional parity of $M$ is not encoded directly in the formal vanishing of the mean curvature, it is included in the constraint on $(\cmG,\cg)$, and hence in the notion of extrinsic ambient equivalence:

\begin{definition}
    \label{defn:extrinsic-ambient-space}
    Two extrinsic pre-ambient spaces $\widetilde{\jmath}_\ell \colon \cmS_\ell \to (\cmG_\ell , \cg_\ell )$, $\ell \in \{ 1 , 2 \}$, for a conformal submanifold $j \colon \Sigma^k \to (M^n , \kc)$ are \defn{extrinsic ambient equivalent} if, after shrinking $\cmS_\ell$ and $\cmG_\ell$ if necessary, there are $\cdelta_s$-equivariant diffeomorphisms $\Psi \colon \cmS_1 \to \cmS_2$ and $\Phi \colon \cmG_1 \to \cmG_2$ such that
    \begin{enumerate}
        \item $\Phi \colon (\cmG_1 , \cg_1) \to (\cmG_2 , \cg_2)$ is an ambient equivalence,
        \item $\Psi \circ \iota_1 = \iota_2$, where $\iota_\ell \colon \mS \to \cmS_\ell$ is the canonical inclusion, and
        \item the difference $\widetilde{D} := d(\widetilde{\jmath}_2 \circ \Psi) - d(\Phi \circ \widetilde{\jmath}_1)$ satisfies
        \begin{enumerate}
            \item $\widetilde{D} \in O^+(\rho^{k/2})$, if $k$ is even,
            \item $\widetilde{D} \in O^+(\rho^{n/2})$, if $k$ is odd and $n$ is even, and
            \item $\widetilde{D} \in O(\rho^\infty)$, if $k$ and $n$ are odd.
        \end{enumerate}
    \end{enumerate}
    We call $(\Psi,\Phi)$ an \defn{extrinsic ambient equivalence}.
\end{definition}

Here $O^+(\rho^m)$ denotes the subspace of sections $T \in O(\rho^m)$ of $T^\ast\cmS \otimes \cjmath^{-1}T\cmG$ such that $\iota^\ast(\rho^{-m}T)=0$, where $\iota^\ast$ acts only on the $T^\ast\cmS$ factor.
Note that extrinsic ambient equivalence is an equivalence relation.

We do not assume that $\cjmath_2 \circ \Psi = \Phi \circ \cjmath_1$ in Definition~\ref{defn:extrinsic-ambient-space}, but rather only that these two maps formally agree to an order depending on the parities of $k$ and $n$.
Thus an extrinsic ambient equivalence is a dilation-equivariant diagram
\begin{equation*}
    \begin{tikzcd}
        & \mS \arrow[ld,swap,"\iota_1"] \arrow[dd,near start,"\jmath"] \arrow[rd,"\iota_2"] & \\
        \cmS_1 \arrow[rr,crossing over,"\Psi" near start] \arrow[dd,swap,"\widetilde{\jmath}_1"] & & \cmS_2 \arrow[dd,"\widetilde{\jmath}_2"] \\
        & \mG \arrow[ld,swap,"\iota_1"] \arrow[rd,"\iota_2"]  & \\
        \cmG_1 \arrow[rr,"\Phi" near start] & & \cmG_2
    \end{tikzcd}
\end{equation*}
for which the front face formally commutes and all other faces commute.
Note that if $(\Psi,\Phi)$ is an extrinsic ambient equivalence, then $\Psi$ is an ambient equivalence.

The main result of this section constructs extrinsic ambient spaces:

\begin{theorem}
    \label{thm:extrinsic-ambient-space}
    Let $j \colon \Sigma^k \to (M^n,\kc)$ be a conformal submanifold.
    There is an extrinsic ambient space $\cjmath \colon \cmS \to (\cmG,\cg)$ for $j$.
    Moreover, $\cjmath$ is unique up to extrinsic ambient equivalence.
\end{theorem}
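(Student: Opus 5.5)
The plan is to reduce the construction to a formal ODE in $\rho$ for a section of the normal bundle, and to solve it order by order as in Graham--Witten, but working directly in the ambient space. By Fefferman--Graham~\cite{FeffermanGraham2012}, after fixing $g \in \kc$ we may take $(\cmG,\cg)$ in normal form: $\cmG \subset \bR_+ \times M \times I$ with
\begin{equation*}
  \cg = 2\rho\,dt^2 + 2t\,dt\,d\rho + t^2 g_\rho , \qquad g_0 = g ,
\end{equation*}
where $g_\rho$ is determined mod $O^+(\rho^{n/2})$ ($n$ even) or $O(\rho^\infty)$ ($n$ odd). The trivialization of $\mS$ by $h = j^\ast g$ identifies $\mS \times \bR$ with $\bR_+ \times \Sigma \times \bR$. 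In it, the tautological immersion is $\jmath(t,x) = (t,j(x))$.

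For existence, I first use dilation-equivariance to reduce to $\delta_s$-invariant data on $\Sigma \times I$. Writing $\cjmath(t,x,\rho) = (t\,a(x,\rho), J(x,\rho), \rho)$ and applying Proposition~\ref{prop:tubular-neighborhood} to the quotient immersion $\Sigma \times I \to M \times I$, I may assume $J(x,\rho) = \exp^\perp \xi(x,\rho)$ with $\xi(\cdot,0)=0$. The residual freedom is the function $a$ with $a(x,0)=1$, together with reparametrizations of $\cmS$. Any such $\cjmath$ satisfies the three pre-ambient conditions. Indeed, $\iota^\ast\cjmath^\ast\cg = t^2 h$ because $\xi$ and $a-1$ vanish at $\rho=0$, homogeneity is automatic, and nondegeneracy near $\rho=0$ follows from the $dt\,d\rho$ term. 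I expect $a$ to be pure gauge: it corresponds to reparametrizing the $t$-fibres, and $\cH$ is reparametrization invariant. So I can normalize $a \equiv 1$, or choose $a$ so that the normal component of $\partial_\rho$ is controlled.

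Next I would compute $\cH$ in terms of $\xi$. Let $\xi = \sum_{m\ge1}\rho^m \xi_{(m)}$. The key claim is that the $N\Sigma$-component of $\cH$ at order $\rho^{m-1}$ takes the form
\begin{equation*}
  c\,(2m-k)\,\xi_{(m)} + \mathcal{F}_m\bigl(\xi_{(1)},\dotsc,\xi_{(m-1)}, g, \partial^{\le 2m} g_\rho\bigr)
\end{equation*}
with $c \neq 0$. Here the $t$- and $\rho$-directional components of $\cH$ vanish automatically by homogeneity, or reduce to the normal ones. The factor $(2m-k)$ is the indicial root coming from the Hessian term $2t\,\partial_t\partial_\rho$ together with $t^2\Delta$. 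This lets me solve for $\xi_{(m)}$ inductively for $m < k/2$ when $k$ is even, and for all $m$ when $k$ is odd; Borel summation then produces a smooth $\xi$. When $k$ is odd and $n$ is even, $g_\rho$ is known only modulo $O^+(\rho^{n/2})$, so $\xi$ is canonically determined only to the corresponding order; this is exactly what the middle case of Definition~\ref{defn:extrinsic-ambient-space} allows. Computing this indicial operator, and checking that the trace-free ($O^+$) ambiguity of $g_\rho$ feeds in only at the claimed orders, is the step I expect to be the main obstacle. I would do it in Fermi coordinates, mirroring the Graham--Witten computation but with the extra $t$ variable.

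For uniqueness, take two extrinsic ambient spaces. Composing with a Fefferman--Graham ambient equivalence $\Phi$, I may assume both ambient metrics are in normal form relative to the same $g$, agreeing to the stated order. Applying Proposition~\ref{prop:tubular-neighborhood} to each (after quotienting by dilations) gives diffeomorphisms $\Psi_\ell$ and sections $\xi^{(\ell)}$. Since $\cH_\ell$ vanish to the required order, the inductive formula forces $\xi^{(1)} - \xi^{(2)} = O(\rho^{k/2})$, resp.\ $O(\rho^{n/2})$ or $O(\rho^\infty)$. Setting $\Psi := \Psi_2 \circ \Psi_1^{-1}$, extended equivariantly in $t$, the differential difference $\cD$ then vanishes to that order. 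The $O^+$ refinement follows because the $\rho^m$ coefficient of the difference has no $d\rho$ component surviving restriction to $\rho=0$ (the leading discrepancy enters only through $\partial_\rho$). Finally, I would check that $\Psi\circ\iota_1=\iota_2$ and that $\Psi$ is $\cdelta_s$-equivariant, both of which hold by construction.
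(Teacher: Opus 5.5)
Your strategy matches the paper's. You put $(\cmG,\cg)$ in straight normal form, write the immersion as $(t,\exp^\perp\xi_\rho(x),\rho)$, and fix the Taylor coefficients of $\xi_\rho$ by the vanishing of $\cH$, as in Proposition~\ref{prop:extrinsic-straight-and-normal}. For uniqueness you combine an ambient equivalence with Proposition~\ref{prop:tubular-neighborhood}; your gauge function $a$ is what the paper removes by slicing at $t\circ\cjmath=1$. The trouble is the key indicial claim. It is off by one order, and every order count after it inherits the error.

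Replacing $\xi_\rho$ by $\xi_\rho+f\rho^m$ changes $(k+2)\cH_{\gamma'}$ by $m(k+2-2m)\rho^{m-1}f_{\gamma'}+O(\rho^m)$; see \eqref{eqn:ambient-mean-curvature-inductive-step}. The two contributions are:
\begin{enumerate}
\item The normal projection acquires $-tm\rho^{m-1}f_{\gamma'}\partial_0$. This adds $mt^2\rho^{m-1}f_{\gamma'}h_{\alpha\beta}$ to $\cL_{\alpha\beta\gamma'}$, whose trace gives $km$.
\item $\cL_{\infty\infty\gamma'}$ gains $m(m-1)t^2\rho^{m-2}f_{\gamma'}$. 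Paired with $\ch^{\infty\infty}=-2t^{-2}\rho+O(\rho^2)$, this gives $-2m(m-1)$.
\end{enumerate}
The $0$-components drop out because $\cL_{0A\gamma'}=0$. So the indicial root is $m=k/2+1$, not $k/2$.

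Your factor $(2m-k)$ already fails at $m=1$ when $k=2$. It would say $\xi_{(1)}$ cannot be chosen. In fact $(k+2)\cH_{\gamma'}=k\bigl(H_{\gamma'}+(\xi_{(1)})_{\gamma'}\bigr)+O(\rho)$, which is killed by $(\xi_{(1)})_{\gamma'}=-H_{\gamma'}$.

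This error is not cosmetic.
\begin{itemize}
\item \textbf{Existence.} With your factor the induction stops at $m=k/2-1$. That yields only $\cH=O(\rho^{k/2-1})$, which does not meet the definition of an extrinsic ambient space.
\item \textbf{Uniqueness.} You likewise get only $\xi^{(1)}-\xi^{(2)}=O(\rho^{k/2})$. Then the $d\rho$-component of $\cD$ is only $O(\rho^{k/2-1})$, so $\cD\notin O^+(\rho^{k/2})$.
\end{itemize}
Your remark that the leading discrepancy of $\cD$ sits in the $d\rho$ slot is exactly why the displacement must vanish to order $\rho^{k/2+1}$ (resp.\ $\rho^{n/2+1}$ when $k$ is odd and $n$ is even). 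Equivalently, $\xi_\rho$ must be determined mod $O(\rho^{k/2+1})$ (resp.\ $O(\rho^{n/2+1})$) to obtain the $O^+$ conditions of Definition~\ref{defn:extrinsic-ambient-space}. The correct indicial factor gives precisely this.

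Two further points need proof rather than expectation.
\begin{itemize}
\item For $k$ odd and $n$ even, you must show that $\cH$ mod $O(\rho^{n/2})$ depends only on $g_\rho$ mod $O(\rho^{n/2})$. The paper gets this from \eqref{eqn:ambient-christoffel}. The symbols $\widetilde{\Gamma}^0_{ab}$ and $\widetilde{\Gamma}^c_{a\infty}$ that contain $g'_\rho$ are known only mod $O(\rho^{(n-2)/2})$. However, they always enter $\cL$ and $\cH$ multiplied by quantities that are $O(\rho)$: $\cg_{00}$, $u^\infty_{\gamma'}$, $\ch^{\alpha\infty}$, and $\ch^{\infty\infty}$.
\item Applying Proposition~\ref{prop:tubular-neighborhood} requires $d(\rho\circ\cjmath)\neq0$ near $\iota(\mS)$. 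This follows from $\cjmath^\ast\cg(\cX,\cdot)=d\bigl(t^2\,\rho\circ\cjmath\bigr)$ together with nondegeneracy, and you should verify it.
\end{itemize}
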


Like the construction of the ambient metric~\cite{FeffermanGraham2012}, it is illuminating to split the proof of Theorem~\ref{thm:extrinsic-ambient-space} into two parts.
First we prove the existence and uniqueness of extrinsic ambient spaces in a canonical form.
Then we prove that any extrinsic ambient space is extrinsic ambient equivalent to such an extrinsic ambient space.

Let $(\cmG,\cg)$ be a pre-ambient space for $(M^n,\kc)$.
We say that $(\cmG,\cg)$ is \defn{straight} if the infinitesimal generator $\cX$ of dilation satisfies $\cnabla\cX = \Id$.
Pick $g \in \kc$ and identify $\mG \cong \bR_+ \times M$ by $(x,t^2g_x) \cong (t,x)$.
We say that $(\cmG,\cg)$ is in \defn{normal form} with respect to $g$ if
\begin{enumerate}
    \item for each $z \in \mG$, the set $\{ \rho \in \mathbb{R} \mathrel{}:\mathrel{} (z,\rho) \in \cmG \}$ is an open interval containing $0$,
    \item the map $\rho \mapsto (z,\rho)$ is a geodesic for each $z \in \mG$, and
    \item $\cg = t^2g + 2t\,dt\,d\rho$ along $\iota(\mG)$.
\end{enumerate}
Let $(\cmG,\cg)$ be in normal form with respect to $g$.
Then $(\cmG,\cg)$ is straight if and only if there is a one-parameter family $g_\rho$ of pseudo-Riemannian metrics such that
\begin{equation*}
    \cg = 2\rho\,dt^2 + 2t\,dt\,d\rho + t^2g_\rho
\end{equation*}
and $g_0 = g$~\cite{FeffermanGraham2012}*{Lemma~3.1 and Proposition~3.4}.
This reduces the construction of the ambient space to the recursive determination of the Taylor series of $g_\rho$.

The construction of the extrinsic ambient space follows the same general strategy.
Our canonical form is as follows:

\begin{definition}
    \label{defn:extrinsic-straight-and-normal}
    An extrinsic pre-ambient space $\cjmath \colon \cmS \to (\cmG,\cg)$ for a conformal submanifold $j \colon \Sigma^k \to (M^n,\kc)$ is \defn{orthogonal} with respect to $g \in \kc$ if
    \begin{enumerate}
        \item $(\cmG,\cg)$ is straight and in normal form with respect to $g$, and
        \item there is a one-parameter family $\xi_\rho$ of sections of $N\Sigma$ such that $\xi_0=0$ and
        \begin{equation}
            \label{eqn:straight-and-normal-ambient-immersion}
            \cjmath(t,x,\rho) = \left( t , \exp^\perp \xi_\rho(x) , \rho \right) ,
        \end{equation}
        where $\exp^\perp$ is the normal exponential map~\eqref{eqn:normal-exponential-map}.
    \end{enumerate}
\end{definition}

Note that $\cjmath \circ \iota = \iota \circ \jmath$.
If $\cjmath \colon \cmS \to (\cmG,\cg)$ is orthogonal with respect to $g$, then $\cjmath^\ast\cg$ is straight, but it need not be in normal form with respect to $j^\ast g$;
see Remark~\ref{rk:straight-and-normal}.

Analogous to the Fefferman--Graham construction, the existence and uniqueness of orthogonal extrinsic ambient spaces is encoded in the Taylor series of $\xi_\rho$:

\begin{proposition}
    \label{prop:extrinsic-straight-and-normal}
    Let $j \colon \Sigma^k \to (M^n,\kc)$ be a conformal submanifold.
    Let $(\cmG,\cg)$ be a straight ambient space for $(M,\kc)$ that is in normal form with respect to $g \in \kc$.
    There is a one-parameter family $\xi_\rho$ of sections of $N\Sigma$ such that $\xi_0=0$ and Equation~\eqref{eqn:straight-and-normal-ambient-immersion} defines an extrinsic ambient space $\cjmath \colon \bR_+ \times \Sigma \times (-\varepsilon,\varepsilon) \to \cmG$ that is orthogonal with respect to $g$.
    Moreover, $\xi_\rho$ mod $O(\rho^s)$ is uniquely determined by $j$ and $g$, where
    \begin{enumerate}
        \item $s := k/2+1$ if $k$ is even;
        \item $s := n/2+1$ if $k$ is odd and $n$ is even;
        \item $s := \infty$ if $k$ and $n$ are odd.
    \end{enumerate}
\end{proposition}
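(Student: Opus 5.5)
The plan is to reduce the condition $\cH = O(\rho^{k/2})$ (resp.\ $O(\rho^\infty)$) to a recursive system for the Taylor coefficients of $\xi_\rho$, as Fefferman and Graham do for $g_\rho$. Write $\cg = 2\rho\,dt^2 + 2t\,dt\,d\rho + t^2 g_\rho$ and make the ansatz $\xi_\rho = \sum_{m\geq 1} \rho^m \xi^{(m)}$, with each $\xi^{(m)}$ a section of $N\Sigma$. The immersion~\eqref{eqn:straight-and-normal-ambient-immersion} is then automatically $\cdelta_s$-equivariant and satisfies $\cjmath\circ\iota = \iota\circ\jmath$. Since $\xi_0 = 0$, the pullback $\cjmath^\ast\cg$ restricts on $\{\rho=0\}$ to $t^2 j^\ast g + 2t\,dt\,d\rho$. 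So it is nondegenerate near $\rho = 0$ and is a pre-ambient metric for $(\Sigma, j^\ast\kc)$. It remains to analyse $\cH$.

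First I will compute the mean curvature. A tangent frame of $\cjmath$ is $T = \partial_t$, the vectors $\partial_\alpha + O(\rho)$, and $\partial_\rho + \partial_\rho(\exp^\perp\xi_\rho)$. Homogeneity shows that $\cH$ has the form $t^{-2}\cdot(\text{a $\rho$-dependent section of } N\Sigma \text{ pulled back})$. The $\partial_t$ directions contribute only through $\cnabla\cX = \Id$, and their contribution is tangential. I will compute the normal component of $\cH$, viewed as a section of $N\Sigma$ via the normal exponential map, modulo $O(\rho^{m})$. The key identity I aim for is
\begin{equation*}
  \text{(normal part of } k\,t^2\cH)\ \equiv\ c_m\,\rho^{m-1}\,\xi^{(m)} + \rho^{m-1}\,\mathcal{F}_m\bigl(j,g;\xi^{(1)},\dots,\xi^{(m-1)}\bigr) \mod O(\rho^{m}),
\end{equation*}
with $c_m = 2m(2m-k-2)$ up to a nonzero normalisation, and $\mathcal{F}_m$ a natural expression in the lower coefficients and the Taylor coefficients of $g_\rho$. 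The linear term comes from the $\cg^{\rho t}$ and $\cg^{\rho\rho}$ entries of the inverse induced metric, which pair $\partial_\rho$ with $\partial_t$ and with $\rho\,\partial_\rho$. It is the same indicial operator $2\rho\partial_\rho^2 + (2-k)\partial_\rho$ (up to normalisation) that appears for a function of weight zero. Getting this indicial coefficient right, including the $\rho\partial_\rho$ terms arising from $2\rho\,dt^2$, is the main computational obstacle. I will organise it by restricting to $t = 1$ and using the $\cnabla\cX = \Id$ identities to eliminate the $t$-derivatives.

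With the identity in hand, the recursion runs as follows. At order $\rho^{m-1}$ one can solve uniquely for $\xi^{(m)}$ whenever $c_m \neq 0$, that is, whenever $2m \neq k+2$. If $k$ is even, this uniquely determines $\xi^{(1)},\dots,\xi^{(k/2)}$, which means $\xi_\rho$ mod $O(\rho^{k/2+1})$. It also achieves $\cH = O(\rho^{k/2})$, because the condition at order $\rho^{k/2}$ is the one that cannot be imposed. If $k$ is odd, $c_m$ never vanishes, so the recursion continues as long as $g_\rho$ is determined. In that case $\mathcal{F}_m$ involves the Taylor coefficients of $g_\rho$ up to order about $m$, and these are determined modulo $O(\rho^{n/2})$, with the trace ambiguity $O^+$, when $n$ is even, and to all orders when $n$ is odd. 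For $n$ even one checks that $\xi^{(m)}$ is determined for $m \leq n/2$, giving $s = n/2+1$. The point to verify is that the first undetermined coefficient $g^{(n/2)}$ enters $\mathcal{F}_m$ only from $m = n/2+1$ onward, through its $T\Sigma$ projection; I will check this by tracking how $\partial_\rho g_\rho$ appears in the Christoffel symbols. For $k$ and $n$ both odd, every coefficient is determined. Borel's lemma then produces a smooth family $\xi_\rho$ with the prescribed Taylor series, and hence an extrinsic ambient space on $\bR_+\times\Sigma\times(-\varepsilon,\varepsilon)$, shrinking $\varepsilon$ locally if needed. Uniqueness of $\xi_\rho$ mod $O(\rho^s)$ is the uniqueness in the recursion.
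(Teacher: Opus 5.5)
Your outline follows the paper's route: fix the straight normal-form metric, replace $\xi_\rho$ by $\xi_\rho+f\rho^\ell$, and solve order by order. The coefficient you announce, $c_m\propto m(k+2-2m)$, is exactly the one in Equation~\eqref{eqn:ambient-mean-curvature-inductive-step}. The gap is that this identity, which is essentially the whole content of the proposition, is only asserted, and the one justification you give for it is wrong.

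The operator $2\rho\partial_\rho^2+(2-k)\partial_\rho$ sends $\rho^m$ to $m(2m-k)\rho^{m-1}$, whose nonzero root is $m=k/2$. Taken literally, it contradicts your $c_m$. For $k$ even it would also give only $\cH=O(\rho^{k/2-1})$ and uniqueness mod $O(\rho^{k/2})$. The relevant operator is $(k-2\rho\partial_\rho)\partial_\rho$ (cf.\ Equation~\eqref{eqn:ambient-mean-curvature-lot}), which up to sign is the \emph{weight-one} analogue of yours. The reason is that a $\cdelta_s$-invariant normal displacement $V$ satisfies $\cnabla_{\cX}V=V$, so the cross term $2\ch^{0\infty}\cnabla^2_{0\infty}V$ vanishes. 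For a weight-zero function, that same term is exactly what moves the root down to $k/2$.

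The paper obtains the coefficient by direct bookkeeping of $\cH$ modulo $O(\rho^\ell)$, with three contributions:
\begin{itemize}
\item The $\rho^{\ell-1}$ change in $\ch_{\infty\infty}$ only alters $\ch^{00}$, which is harmless because $\cL_{0A\gamma'}=0$.
\item The normal projection shifts by $-t\ell\rho^{\ell-1}f_{\gamma'}\partial_0$. Since $\cg(\cnabla_{\cjmath_\ast\partial_\alpha}\cjmath_\ast\partial_\beta,\partial_0)=-th_{\alpha\beta}$, this adds $\ell t^2\rho^{\ell-1}f_{\gamma'}h_{\alpha\beta}$ to $\cL_{\alpha\beta\gamma'}$. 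Tracing with $\ch^{\alpha\beta}=t^{-2}g^{\alpha\beta}+O(\rho)$ contributes $k\ell$.
\item $\cL_{\infty\infty\gamma'}$ gains $\ell(\ell-1)t^2\rho^{\ell-2}f_{\gamma'}$, which contributes $-2\ell(\ell-1)$ through $\ch^{\infty\infty}=-2t^{-2}\rho+O(\rho^2)$.
\end{itemize}

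The claim for $n$ even also needs more than ``tracking how $\partial_\rho g_\rho$ appears''. The symbols $\cGamma^0_{ab}=-\frac{t}{2}g'_{ab}$ and $\cGamma^c_{a\infty}=\frac{1}{2}g^{cd}g'_{ad}$ are determined only mod $O(\rho^{(n-2)/2})$. A naive count therefore lets $g^{(n/2)}$ into $\mathcal{F}_{n/2}$ and would give only $s=n/2$. The missing order is recovered in two ways:
\begin{itemize}
\item The $\partial_0$-component of $\cnabla_{\cjmath_\ast\partial_\alpha}\cjmath_\ast\partial_\beta$ is paired with $\cg(\partial_0,N\partial_{\gamma'})=0$, since $\partial_0=\cjmath_\ast\partial_0$ is tangent.
\item $\cGamma^c_{a\infty}$ enters only $\cL_{\infty\alpha\gamma'}$ and $\cL_{\infty\infty\gamma'}$, and these reach $\cH$ multiplied by $\ch^{\alpha\infty}$ or $\ch^{\infty\infty}$, both $O(\rho)$.
\end{itemize}
Hence $\cH$ mod $O(\rho^{n/2})$ depends only on $g_\rho$ mod $O(\rho^{n/2})$, which is exactly what gives $s=n/2+1$.

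A minor correction: at $\rho=0$ the induced metric also contains $t^2\lvert\xi^{(1)}\rvert^2\,d\rho^2$. Nondegeneracy is unaffected, and this term only enters $\ch^{00}$.
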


\begin{proof}
    Set $\cmS := \bR_+ \times \Sigma \times (-\varepsilon,\varepsilon)$.
    Define $\iota \colon \mS \to \cmS$ by $\iota(x,t^2h_x) := (t,x,0)$ for $h := j^\ast g$.
    Let $\xi_\rho$ be a one-parameter family of sections of $N\Sigma$ such that $\xi_0=0$.
    Define $j_\rho \colon \Sigma \to M$ by $j_\rho(x) := \exp^\perp \xi_\rho(x)$ and define $\cjmath$ by Equation~\eqref{eqn:straight-and-normal-ambient-immersion}.
    Then $\cjmath(t,x,\rho) = (t,j_\rho(x),\rho)$.
    We recursively determine the Taylor series of $\xi_\rho$ at $\rho=0$ by the requirement that $\cjmath$ is asymptotically minimal.

    Pick Fermi coordinates $(z^a) = (x^\alpha,u^{\alpha'})$ near a point $j(p) \in j(\Sigma)$.
    Extend these to local coordinates $(x^A) = (t,x^\alpha,\rho)$ and $(z^A) = (t,z^a,\rho)$ on $\cmS$ and $\cmG$, respectively, with the convention $x^0=z^0=t$ and $x^\infty=z^\infty=\rho$.
    
    Denote by $\widetilde{\Gamma}_{AB}^C$ the Christoffel symbols of the Levi-Civita connection $\widetilde{\nabla}$ of $\cg$ with respect to $(z^A)$.
    Direct computation~\cite{FeffermanGraham2012}*{Equation~(3.16)} gives
    \begin{equation}
        \label{eqn:ambient-christoffel}
        \begin{split}
            \widetilde{\Gamma}_{AB}^0 & =
            \begin{pmatrix}
                0 & 0 & 0 \\
                0 & -\frac{t}{2}g'_{ab} & 0 \\
                0 & 0 & 0
            \end{pmatrix} , \\
            \widetilde{\Gamma}_{AB}^c & =
            \begin{pmatrix}
                0 & t^{-1}\delta_b^c & 0 \\
                t^{-1}\delta_a^c & \Gamma_{ab}^c & \frac{1}{2}g^{cd}g'_{ad} \\
                0 & \frac{1}{2}g^{cd}g'_{bd} & 0
            \end{pmatrix} , \\
            \widetilde{\Gamma}_{AB}^\infty & =
            \begin{pmatrix}
                0 & 0 & t^{-1} \\
                0 & -g_{ab} + \rho g'_{ab} & 0 \\
                t^{-1} & 0 & 0
            \end{pmatrix} ,
        \end{split}
    \end{equation}
    where $g_{ab}$ and $\Gamma_{ab}^c$ are the components of $g_\rho$ and the Christoffel symbols of the Levi-Civita connection of $g_\rho$, respectively, and $g'_{ab} := \partial_\rho g_{ab}$.
    It readily follows that 
    \begin{equation}
        \label{eqn:preserve-X}
        \begin{aligned}
            \cnabla_{\cjmath_\ast\partial_0} \cjmath_\ast\partial_0 & = 0 , \\
            \cnabla_{\cjmath_\ast\partial_0} \cjmath_\ast\partial_A & = t^{-1}\cjmath_\ast\partial_A , & \text{if $A \not= 0$} .
        \end{aligned}
    \end{equation}
    Therefore
    \begin{equation}
        \label{eqn:ambient-tfss-killed-by-0}
        \cL_{0A\gamma'} = 0 .
    \end{equation}

    Set $\ch := \cjmath^\ast\cg$.
    Then
    \begin{equation*}
        \widetilde{h} = 2\rho \, dt^2 + 2t \, dt \, d\rho + t^2\left( h_{\alpha\beta}\,dx^\alpha \, dx^\beta + 2h_{\alpha\infty} dx^\alpha \, d\rho + h_{\infty\infty} \, d\rho^2 \right) ,
    \end{equation*}
    where
    \begin{equation}
        \label{eqn:induced-metric-components}
        \begin{split}
            h_{\alpha\beta} & = g_{\alpha\beta} + 2j_{,(\alpha}^{\alpha'} g_{\beta)\alpha'} + j_{,\alpha}^{\alpha'}j_{,\beta}^{\beta'}g_{\alpha'\beta'} , \\ 
            h_{\alpha\infty} & = j_{,\infty}^{\beta'} (g_{\alpha\beta'} + j_{,\alpha}^{\alpha'}g_{\alpha'\beta'}) , \\
            h_{\infty\infty} & = j_{,\infty}^{\alpha'} j_{,\infty}^{\beta'} g_{\alpha'\beta'} .
        \end{split}
    \end{equation}
    In these formulas, the partial derivatives $j_{,\alpha}^{\alpha'},j_{,\infty}^{\alpha'}$ are evaluated at $x$, the components $g_{ab}$ are evaluated at $j_\rho(x)$, and $h_{\alpha\beta}$ are the components of $j_\rho^\ast g_\rho$.
    Denote by
    \begin{equation}
        \label{eqn:components-of-ambient-normal-projection}
        N\partial_{\gamma'} = \partial_{\gamma'} - u_{\gamma'}^0\cjmath_\ast\partial_0 - u_{\gamma'}^\alpha\cjmath_\ast\partial_\alpha - u_{\gamma'}^\infty\cjmath_\ast\partial_\infty 
    \end{equation}
    the normal projection of $\partial_{\gamma'}$.
    Direct computation yields
    \begin{equation}
        \label{eqn:normal-projection}
        \begin{split}
            0 & = \cg(\cjmath_\ast\partial_0,N\partial_{\gamma'}) = -2\rho u_{\gamma'}^0 - tu_{\gamma'}^\infty , \\
            0 & = \cg(\cjmath_\ast\partial_\alpha , N\partial_{\gamma'}) = t^2g_{\alpha\gamma'} + t^2j_{,\alpha}^{\alpha'}g_{\alpha'\gamma'} - t^2h_{\alpha\beta}u_{\gamma'}^\beta - t^2h_{\alpha\infty}u_{\gamma'}^\infty , \\
            0 & = \cg(\cjmath_\ast\partial_\infty , N\partial_{\gamma'}) = t^2j_{,\infty}^{\alpha'}g_{\alpha'\gamma'} - tu_{\gamma'}^0 - t^2u_{\gamma'}^\alpha h_{\alpha\infty} - t^2u_{\gamma'}^\infty h_{\infty\infty} .
        \end{split}
    \end{equation}

    We now determine the Taylor series of $\xi_\rho$.
    First, since $\xi_0=0$, there is a section $f$ of $N\Sigma$ such that $\xi_\rho = f\rho + O(\rho^2)$.
    Since $g_{\alpha\alpha'}=O(\rho)$, we see that
    \begin{equation*}
        \ch_{AB} =
        \begin{pmatrix}
            0 & 0 & t \\
            0 & t^2g_{\alpha\beta} & 0 \\
            t & 0 & t^2f^{\alpha'}f_{\alpha'}
        \end{pmatrix}
        + O(\rho) .
    \end{equation*}
    Denote by $\ch^{AB}$ the components of $\ch^{-1}$.
    It follows that
    \begin{equation}
        \label{eqn:inverse-induced-ambient-metric}
        \ch^{AB} =
        \begin{pmatrix}
            -f^{\alpha'}f_{\alpha'} & 0 & t^{-1} \\
            0 & t^{-2}g^{\alpha\beta} & 0 \\
            t^{-1} & 0 & 0
        \end{pmatrix}
        + O(\rho) .
    \end{equation}
    Combining Equations~\eqref{eqn:ambient-tfss-killed-by-0} and~\eqref{eqn:inverse-induced-ambient-metric} yields
    \begin{equation*}
        (k+2)\cH_{\gamma'} = t^{-2}g^{\alpha\beta}\cL_{\alpha\beta\gamma'} + O(\rho) .
    \end{equation*}
    It follows readily from Equation~\eqref{eqn:normal-projection} that $N\partial_{\gamma'} = \partial_{\gamma'} - tf_{\gamma'}\partial_0 + O(\rho)$.
    Combining this with Equation~\eqref{eqn:ambient-christoffel} yields
    \begin{equation}
        \label{eqn:ambient-mean-curvature-base-case}
        \cL_{\alpha\beta\gamma'} = t^2 \left( L_{\alpha\beta\gamma'} + f_{\gamma'}g_{\alpha\beta} \right) + O(\rho) .
    \end{equation}
    Therefore $\cH_{\gamma'}=O(\rho)$ if and only if $f_{\gamma'}=-H_{\gamma'}$.

    Suppose now that $\ell \geq 2$ is an integer such that $\xi_\rho^{(\ell-1)}$ has been uniquely determined modulo $O(\rho^\ell)$ by the requirement that $\cH_{\gamma'} = O(\rho^{\ell-1})$.
    Set $\xi_\rho^{(\ell)} = \xi_\rho^{(\ell-1)} + f\rho^\ell$ for some section $f$ of $N\Sigma$.
    We use the superscript ${}^{(\ell)}$ to denote quantities computed using the embedding $\cjmath^{(\ell)}$ determined by $\xi_\rho^{(\ell)}$, and omit the superscript when denoting quantities computed using the embedding $\cjmath$ determined by $\xi_\rho^{(\ell-1)}$.
    On the one hand, since $g_{\alpha\alpha'},j_{,\alpha}^{\alpha'} \in O(\rho)$, we compute that
    \begin{equation*}
        \ch_{AB}^{(\ell)} = \ch_{AB} +
        \begin{pmatrix}
            0 & 0 & 0 \\
            0 & 0 & 0 \\
            0 & 0 & 2\ell t^2 f_{\alpha'} j_{,\infty}^{\alpha'}
        \end{pmatrix} \rho^{\ell-1}
        + O(\rho^\ell) .
    \end{equation*}
    Therefore the components $\ch_{(\ell)}^{AB}$ of $(\ch^{(\ell)})^{-1}$ are given by
    \begin{equation*}
        \ch_{(\ell)}^{AB} = \ch^{AB} -
        \begin{pmatrix}
            2\ell f_{\alpha'} j_{,\infty}^{\alpha'} & 0 & 0 \\
            0 & 0 & 0 \\
            0 & 0 & 0
        \end{pmatrix} \rho^{\ell-1}
        + O(\rho^\ell) .
    \end{equation*}
    Combining this with Equation~\eqref{eqn:ambient-tfss-killed-by-0} yields
    \begin{equation*}
        (k+2)\cH_{\gamma'}^{(\ell)} = \ch^{AB}\cL_{AB\gamma'}^{(\ell)} + O(\rho^\ell) .
    \end{equation*}
    On the other hand, it readily follows from Equation~\eqref{eqn:normal-projection} that
    \begin{equation*}
        N^{(\ell)}\partial_{\gamma'} = N\partial_{\gamma'} - t\ell\rho^{\ell-1}f_{\gamma'}\partial_0 + O(\rho^\ell) .
    \end{equation*}
    Combining this with Equations~\eqref{eqn:ambient-christoffel} yields
    \begin{align*}
        \cL_{\alpha\beta\gamma'}^{(\ell)} & = \ell t^2\rho^{\ell-1} f_{\gamma'}h_{\alpha\beta} + \cL_{\alpha\beta\gamma'} + O(\rho^{\ell}) , \\
        \cL_{\infty\beta\gamma'}^{(\ell)} & = \cL_{\infty\beta\gamma'} + O(\rho^{\ell-1}) , \\
        \cL_{\infty\infty\gamma'}^{(\ell)} & = \ell(\ell-1)t^2 \rho^{\ell-2} f_{\gamma'} + \cL_{\infty\infty\gamma'} + O(\rho^{\ell-1}) .
    \end{align*}
    Since $\ch^{\alpha\infty} = O(\rho)$ and $\ch^{\infty\infty} = -2t^{-2}\rho + O(\rho^2)$, we deduce that
    \begin{equation}
        \label{eqn:ambient-mean-curvature-inductive-step}
        (k+2)\cH_{\gamma'}^{(\ell)} = \ell(k + 2 - 2\ell)\rho^{\ell-1} f_{\gamma'} + (k+2)\cH_{\gamma'} + O(\rho^{\ell}) .
    \end{equation}
    We conclude that, unless $\ell=k/2+1$, there is a unique choice of $f$ such that $\cH_{\gamma'}^{(\ell)} = O(\rho^{\ell})$.

    Finally, suppose that $\xi_\rho$ is given.
    We claim that $\widetilde{H}_{\gamma'}$ mod $O(\rho^{n/2})$ is locally determined by $g_\rho$ mod $O(\rho^{n/2})$.
    If true, then the claimed dependence of $\xi_\rho$ mod $O(\rho^s)$ on $j$ and $g$ follows from Equation~\eqref{eqn:ambient-mean-curvature-inductive-step}.

    We now verify our claim.
    By Equations~\eqref{eqn:ambient-tfss-killed-by-0} and~\eqref{eqn:inverse-induced-ambient-metric}, it suffices to show that $\cL_{\alpha\beta\alpha'}$ mod $O(\rho^{n/2})$ and $\cL_{\infty\alpha\alpha'},\cL_{\infty\infty\alpha'}$ mod $O(\rho^{(n-2)/2})$ are locally determined by $g_\rho$ mod $O(\rho^{n/2})$.
    Equation~\eqref{eqn:normal-projection} implies that $u_{\gamma'}^0 = tj_{,\infty}^{\alpha'}g_{\alpha'\gamma'} + O(\rho)$, that $u_{\gamma'}^\beta,u_{\gamma'}^\infty = O(\rho)$, and that $u_{\gamma'}^\infty$ mod $O(\rho^{(n+2)/2})$ and $u_{\gamma'}^\beta,u_{\gamma'}^0$ mod $O(\rho^{n/2})$ are locally determined by $g_\rho$ mod $O(\rho^{n/2})$.
    By definition,
    \begin{equation}
        \label{eqn:christoffel-for-ambiguity}
        \begin{aligned}
            \cnabla_{\cjmath_\ast\partial_\alpha}\cjmath_\ast\partial_\beta & = \left( \cGamma_{\alpha\beta}^C + j_{,\alpha}^{\alpha'}\cGamma_{\alpha'\beta}^C + j_{,\beta}^{\alpha'}\cGamma_{\alpha\alpha'}^C + j_{,\alpha}^{\alpha'}j_{,\beta}^{\beta'}\cGamma_{\alpha'\beta'}^C \right) \partial_C + j_{,\alpha\beta}^{\alpha'}\partial_{\alpha'} , \\
            \cnabla_{\cjmath_\ast\partial_\infty}\cjmath_\ast\partial_\beta & = \left( \cGamma_{\infty\beta}^C + j_{,\infty}^{\alpha'}\cGamma_{\alpha'\beta}^C + j_{,\beta}^{\beta'}\cGamma_{\infty\beta'}^C + j_{,\infty}^{\alpha'}j_{,\beta}^{\beta'}\cGamma_{\alpha'\beta'}^C \right) \partial_C + j_{,\infty\beta}^{\alpha'}\partial_{\alpha'} , \\
            \cnabla_{\cjmath_\ast\partial_\infty}\cjmath_\ast\partial_\infty & = \left( 2j_{,\infty}^{\alpha'}\cGamma_{\alpha'\infty}^C + j_{,\infty}^{\alpha'}j_{,\infty}^{\beta'}\cGamma_{\alpha'\beta'}^C \right) \partial_C + j_{,\infty\infty}^{\alpha'}\partial_{\alpha'} .
        \end{aligned}
    \end{equation}
    Equation~\eqref{eqn:ambient-christoffel} implies that $\cGamma_{ab}^0,\cGamma_{a\infty}^c,\cGamma_{\infty a}^c$ mod $O(\rho^{(n-2)/2})$, and that all other Christoffel symbols $\cGamma_{AB}^C$ mod $O(\rho^{n/2})$, are locally determined by $g_\rho$ mod $O(\rho^{n/2})$.
    It follows immediately that $\cL_{\infty\alpha\alpha'},\cL_{\infty\infty\alpha'}$ mod $O(\rho^{(n-2)/2})$ are locally determined by $g_\rho$ mod $O(\rho^{n/2})$.
    Consider finally
    \begin{equation*}
        \cL_{\alpha\beta\alpha'} = \cg\bigl( \cnabla_{\cjmath_\ast\partial_\alpha} \cjmath_\ast\partial_\beta , N\partial_{\alpha'} \bigr) .
    \end{equation*}
    Since $\cg_{00},\cg_{0\gamma'} = O(\rho)$, the discussion above implies that $\cL_{\alpha\beta\alpha'}$ mod $O(\rho^{n/2})$ is locally determined by $g_\rho$ mod $O(\rho^{n/2})$.
    This verifies our claim.
\end{proof}

\begin{remark}
    \label{rk:straight-and-normal}
    Equation~\eqref{eqn:preserve-X} implies that if $\cjmath \colon \cmS \to (\cmG,\cg)$ is an orthogonal extrinsic ambient space, then $(\cmS,\cjmath^\ast\cg)$ is straight.
    Since the components $h_{\alpha\infty}$ and $h_{\infty\infty}$ need not vanish, $(\cmS,\cjmath^\ast\cg)$ need not be in normal form (cf.\ \cite{FeffermanGraham2012}*{Lemma~3.1}).
\end{remark}

The last statement of Proposition~\ref{prop:extrinsic-straight-and-normal} allows us to prove the uniqueness of extrinsic ambient spaces:

\begin{proposition}
    \label{prop:extrinsic-ambient-uniqueness}
    Suppose that $\cjmath \colon \cmS \to (\cmG,\cg)$ is an extrinsic ambient space for a conformal submanifold $j \colon \Sigma^k \to (M^n,\kc)$.
    Pick $g \in \kc$ and let $\cjmath' \colon \cmS' \to (\cmG',\cg')$ be an extrinsic ambient space for $j$ that is orthogonal with respect to $g$.
    Then $\cjmath$ and $\cjmath'$ are extrinsic ambient equivalent.
\end{proposition}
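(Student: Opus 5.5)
Our plan is to reduce to the orthogonal normal form by straightening the ambient space and then re-parametrizing the submanifold, after which the last statement of Proposition~\ref{prop:extrinsic-straight-and-normal} gives the required agreement of Taylor series.

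\emph{Step 1: straighten the ambient space.} By the Fefferman--Graham theory~\cite{FeffermanGraham2012}, $(\cmG,\cg)$ is ambient equivalent to a straight ambient space in normal form with respect to $g$. Uniqueness of such normal forms identifies this space with $(\cmG',\cg')$ up to the allowed error. Let $\Phi_0 \colon \cmG \to \cmG'$ be the resulting $\cdelta_s$-equivariant ambient equivalence. It satisfies $\Phi_0\circ\iota=\iota$ and $\Phi_0^\ast\cg'-\cg \in O^+(\rho^{n/2})$, or $O(\rho^\infty)$ when $n$ is odd. Replacing $\cjmath$ by $\Phi_0\circ\cjmath$ changes the metric by exactly this error. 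Thus we may assume that $\cjmath$ maps into $(\cmG',\cg)$ with $\cg - \cg'$ of the stated order. The mean curvature of $\Phi_0\circ\cjmath$ with respect to $\cg'$ then differs from that with respect to $\cg$ only at order $\rho^{n/2}$ (or $\rho^\infty$).

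\emph{Step 2: put $\cjmath$ in orthogonal form.} Work in the coordinates $\cmG' \cong \bR_+\times M\times I$. Equivariance and $\cjmath\circ\iota=\iota\circ\jmath$ let us factor out the $t$-direction. Write $\cjmath(t,y,\rho_0)=(t\,a(y,\rho_0),\,J(y,\rho_0),\,\rho(y,\rho_0))$ on $\cmS\cong\bR_+\times\Sigma\times I$. Here $a\equiv1$, $J=j$ and $\rho=0$ along $\rho_0=0$, and $d\rho\neq0$ near $\rho_0=0$ because $\cjmath^\ast\cg$ restricts to a pre-ambient metric. First we reparametrize $\cmS$ $\cdelta_s$-equivariantly so that the $\rho$-coordinate is pulled back from $\cmG'$ and the $t$-coordinate is preserved. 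This requires showing $a\equiv 1$ can be arranged, using $t\mapsto t\,a$ on $\cmS$. Then we apply Proposition~\ref{prop:tubular-neighborhood} to the slice $t=1$, with $\Sigma\times I\to M\times I$. This yields a diffeomorphism $\Psi$ and a family $\xi_\rho$ with $\xi_0=0$ such that $\cjmath\circ\Psi^{-1}(t,x,\rho)=(t,\exp^\perp\xi_\rho(x),\rho)$. The map $\Psi$ fixes $\iota(\mS)$ and is dilation-equivariant, and $\cjmath\circ\Psi^{-1}$ is orthogonal with respect to $g$ in the sense of Definition~\ref{defn:extrinsic-straight-and-normal}.

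\emph{Step 3: compare Taylor series.} Both $\cjmath\circ\Psi^{-1}$ and $\cjmath'$ are orthogonal. Up to the metric error of Step~1, both have mean curvature $O(\rho^{k/2})$ (resp.\ $O(\rho^\infty)$). The inductive formula~\eqref{eqn:ambient-mean-curvature-inductive-step} and the locality claim in the proof of Proposition~\ref{prop:extrinsic-straight-and-normal} then force $\xi_\rho-\xi'_\rho=O(\rho^s)$. Differentiating the orthogonal form~\eqref{eqn:straight-and-normal-ambient-immersion} gives $d(\cjmath'\circ\Psi)-d(\Phi\circ\cjmath)=O(\rho^{s-1})$. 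Moreover, its $\iota^\ast$ part vanishes because the $\rho$-derivative is the only term of order exactly $\rho^{s-1}$. This is precisely the $O^+$ condition of Definition~\ref{defn:extrinsic-ambient-space}, with $s-1=k/2$, $n/2$, or $\infty$ in the three cases.

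The main obstacle is Step~3 when $k$ is odd and $n$ is even. There the mean curvature condition holds to all orders, yet $\xi_\rho$ is only determined modulo $O(\rho^{n/2+1})$, because $\cg$ itself is only determined to that order. We must check carefully that the $O^+(\rho^{n/2})$ metric ambiguity propagates into $\xi_\rho$ only at order $\rho^{n/2+1}$. This is what the locality claim at the end of the proof of Proposition~\ref{prop:extrinsic-straight-and-normal} is designed to supply. A secondary technical point is Step~2: arranging $a\equiv1$ and using equivariance to reduce Proposition~\ref{prop:tubular-neighborhood} to the $t=1$ slice.
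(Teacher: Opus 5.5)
Your proposal is correct and follows essentially the same route as the paper. Both arguments reduce to a common ambient space via the Fefferman--Graham ambient equivalence, then use $\cdelta_s$-equivariance and Proposition~\ref{prop:tubular-neighborhood} on the $t=1$ slice to put $\cjmath$ in orthogonal form, and finally invoke the uniqueness statement of Proposition~\ref{prop:extrinsic-straight-and-normal}; the paper takes the slice to be $\cSigma := (t\circ\cjmath)^{-1}(\{1\})$ instead of arranging $a\equiv1$, and gets $d\crho\neq0$ along $\iota(\mS)$ from the identity $\cjmath^\ast\cg(\cX,\cdot)=d(t^2\crho)$ plus nondegeneracy, which is what your appeal to the pre-ambient metric amounts to. Your check that $\xi_\rho-\xi'_\rho=O(\rho^s)$ gives $\cD\in O^+(\rho^{s-1})$, and your tracking of the $O(\rho^{n/2})$ metric error when $k$ is odd and $n$ is even, spell out details that the paper leaves implicit.
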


\begin{proof}
    Fefferman and Graham~\cite{FeffermanGraham2012}*{Theorem~2.3} proved that there is an ambient equivalence $\Phi \colon (\cmG,\cg) \to (\cmG',\cg')$.
    Proposition~\ref{prop:extrinsic-straight-and-normal} implies that $(\Id,\Phi)$ is an extrinsic ambient equivalence.
    Hence we may assume that $(\cmG,\cg) = (\cmG',\cg')$.
    
    Use $g$ to identify $\mS \cong \bR_+ \times \Sigma$ via $(x,t^2j_x^\ast g) \cong (t,x)$.
    Then $\jmath \colon \mS \to \mG$ is given by $\jmath(t,x) = \bigl(t,j(x)\bigr)$.
    Set $\cSigma := (t \circ \cjmath)^{-1}(\{1\}) \subset \cmS$ and $\cM := t^{-1}(\{ 1 \}) \subset \cmG$.

    Let $\cX$ be the infinitesimal generator of dilations.
    Set $\crho := \rho \circ \cjmath \colon \cmS \to \bR$.
    Since $\cjmath$ is $\cdelta_s$-equivariant, we see that if $\widetilde{Y} \in T_z\cmS$, then
    \begin{equation*}
        \cjmath^\ast\cg(\cX_z,\widetilde{Y}) = \cg\bigl(\cX_{\cjmath(z)},\cjmath_\ast\widetilde{Y}\bigr) = d(t^2\rho)\bigl( \cjmath_\ast\widetilde{Y} \bigr) = d\cjmath^\ast(t^2\rho)(\widetilde{Y}) = d(t^2\crho)(\widetilde{Y}) .
    \end{equation*}
    The nondegeneracy of $\cjmath$ then implies that $d\crho$ is nowhere vanishing along $\iota(\mS)$.
    Using $\cdelta_s$-equivariance and applying Proposition~\ref{prop:tubular-neighborhood} to the restriction $\cjmath\rv_{\cSigma} \colon \cSigma \to \cM$ yields a $\cdelta_s$-invariant neighborhood $\cmS'' \subset \bR_+ \times \Sigma \times (-\varepsilon,\varepsilon)$, a $\cdelta_s$-equivariant diffeomorphism $\Psi \colon \cmS'' \to \cmS$ such that $\Psi(t,x,0) = \iota(t,x)$ for all $(t,x) \in \mS$, and a one-parameter family $\xi_\rho$ of sections of $N\Sigma$ such that
    \begin{equation*}
        (\cjmath \circ \Psi)(t,x,\rho) = \bigl( t , \exp^\perp \xi_\rho(x) , \rho \bigr) .
    \end{equation*}
    By shrinking $\cmS'$ and $\cmS''$ if necessary, we may assume that $\cmS'=\cmS''$.
    We conclude from Proposition~\ref{prop:extrinsic-straight-and-normal} that $(\Psi,\Id)$ is an extrinsic ambient equivalence.
\end{proof}

The proof of the main result of this section is now straightforward:

\begin{proof}[Proof of Theorem~\ref{thm:extrinsic-ambient-space}]
    Proposition~\ref{prop:extrinsic-straight-and-normal} establishes the existence of an orthogonal extrinsic ambient space.
    Proposition~\ref{prop:extrinsic-ambient-uniqueness} establishes its uniqueness.
\end{proof}

Case, Graham, and Kuo~\cite{CGK23}*{Section~6} carried out a careful study of extrinsic ambient spaces for minimal submanifolds of Einstein manifolds.
One of their results, which we require for our study of straightenable natural submanifold tensors, is the existence of a canonical extrinsic ambient space for such submanifolds:

\begin{lemma}
    \label{lem:minimal-in-einstein-ambient-space}
    Let $j \colon \Sigma^k \to (M^n,g)$ be a minimal submanifold of an Einstein manifold with $\Ric = (n-1)\lambda g$.
    Define $\widetilde{\jmath} \colon \widetilde{\mathcal{S}} \to \widetilde{\mathcal{G}}$ by
    \begin{align*}
        \widetilde{\mathcal{G}} & := (0,\infty)_t \times M \times (-\varepsilon,\varepsilon)_\rho , \\
        \widetilde{\mathcal{S}} & := (0,\infty)_t \times \Sigma \times (-\varepsilon,\varepsilon)_\rho , \\
        \widetilde{\jmath}(t,x,\rho) & = \bigl( t,j(x),\rho\bigr) ,
    \end{align*}
    for some $\varepsilon>0$ sufficiently small.
    Set
    \begin{align*}
        \widetilde{g} & := 2\rho \, dt^2 + 2t \, dt \, d\rho + \tau^2 g , \\
        \tau & := t(1+\lambda\rho/2) .
    \end{align*}
    Then $\widetilde{\jmath} \colon \widetilde{\mathcal{S}} \to (\widetilde{\mathcal{G}},\widetilde{g})$ is an extrinsic ambient space for $j \colon \Sigma \to (M,[g])$ for which $\widetilde{\Ric}=0$ and $\widetilde{H}=0$.
\end{lemma}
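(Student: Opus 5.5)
The plan has three parts. First, check that $(\cmG,\cg)$ is a pre-ambient space that is Ricci flat. Second, check that $\cjmath$ is a nondegenerate, dilation-equivariant immersion that is compatible with the inclusions. Third, check that $\cH=0$ identically, so that all order conditions in the definition of an extrinsic ambient space hold trivially.

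\emph{Pre-ambient conditions.} We use $g$ to identify $\mG \cong \bR_+\times M$ via $(x,t^2g_x)\cong(t,x)$. At $\rho=0$ we have $\tau=t$, so $\iota^\ast\cg = t^2 g = \boldsymbol{g}$. Under $\cdelta_s(t,x,\rho)=(st,x,\rho)$ we have $\tau\mapsto s\tau$, which gives $\cdelta_s^\ast\cg=s^2\cg$.

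\emph{Ricci flatness.} Write $\cg = 2\rho\,dt^2+2t\,dt\,d\rho+t^2g_\rho$ with $g_\rho=(1+\lambda\rho/2)^2g$. This is the straight normal form, and the metric is the well-known Fefferman--Graham ambient metric of an Einstein metric. To verify Ricci flatness I would use the Fefferman--Graham formulas for $\Ric(\cg)$ in terms of $g_\rho$, or equivalently \eqref{eqn:ambient-christoffel}. Since $g'_{ab}=\lambda(1+\lambda\rho/2)g_{ab}$, $g''_{ab}=\tfrac{\lambda^2}{2}g_{ab}$, and $\Ric(g_\rho)=\Ric(g)=(n-1)\lambda g$, each component of the Ricci equations reduces to an algebraic identity in $\lambda$ and $\rho$. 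The main one is
\begin{equation*}
    \rho g''-\rho g'g^{-1}g'+\tfrac12\tr(g^{-1}g')\rho g'+\Ric(g_\rho)-\tfrac{n-2}{2}g'-\tfrac12\tr(g^{-1}g')g_\rho=0 ,
\end{equation*}
and it can be checked directly. The mixed and $\rho\rho$ components reduce similarly, using that $g$ has constant scalar curvature and $\nabla$ of $g_\rho$ is $\nabla^g$.

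\emph{Immersion and mean curvature.} Restricting to $\Sigma$ gives $\cjmath^\ast\cg = 2\rho\,dt^2+2t\,dt\,d\rho+\tau^2 j^\ast g$. This is of the same form, so it is a pre-ambient space for $(\Sigma,j^\ast\kc)$. The immersion $\cjmath$ is also dilation-equivariant, is nondegenerate (the matrix of the induced metric is invertible near $\rho=0$), and satisfies $\cjmath\circ\iota=\iota\circ\jmath$.

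For the mean curvature, the normal bundle of $\cjmath$ at $(t,x,\rho)$ is $N_x\Sigma$ viewed inside $TM$, because $\partial_t,\partial_\rho$ are tangent and $\cg$ splits as the $(t,\rho)$ block plus $\tau^2g$. Using \eqref{eqn:ambient-christoffel}, we have
\begin{itemize}
    \item $\cnabla_{\partial_\alpha}\partial_\beta$ has $TM$-component $\nabla^g_{\partial_\alpha}\partial_\beta$, since $\Gamma$ of $g_\rho$ equals that of $g$;
    \item the $\partial_t,\partial_\rho$ components are tangent to $\cmS$;
    \item $\cnabla_{\partial_\rho}\partial_\beta=\tfrac12 g^{-1}g'(\partial_\beta)$, which is tangent to $\Sigma$ because $g'\propto g$, and likewise for $\cnabla_{\partial_t}$;
    \item $\cnabla_{\partial_\rho}\partial_\rho=0$.
\end{itemize}
Hence $\cL_{\alpha\beta\gamma'}=\tau^2L_{\alpha\beta\gamma'}$ and all other components vanish. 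Since the inverse induced metric has $\alpha\beta$ block $\tau^{-2}g^{\alpha\beta}$ and no mixed $\alpha$/$(t,\rho)$ terms, we get $(k+2)\cH=\tau^{-2}\cdot kH=0$.

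The main obstacle is the Ricci-flat computation; I would cite or reproduce the Fefferman--Graham computation for Einstein metrics rather than grind it out.
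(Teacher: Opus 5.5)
Your proposal is correct and takes essentially the paper's approach. Ricci flatness of $\widetilde{g}$ is the Fefferman--Graham computation for Einstein metrics, which you check correctly against their formulas for $\widetilde{R}_{ij}$, $\widetilde{R}_{i\infty}$, $\widetilde{R}_{\infty\infty}$. Minimality of $\widetilde{\jmath}$ is the direct Christoffel-symbol computation the paper cites from Case--Graham--Kuo, and you also check the pre-ambient conditions that the paper leaves implicit. There are two harmless slips: the trace $\tau^{-2}g^{\alpha\beta}\cdot\tau^{2}L_{\alpha\beta\gamma'}$ equals $kH_{\gamma'}$ with no extra factor $\tau^{-2}$, and the mixed Ricci component vanishes simply because $g_\rho' = \lambda(1+\lambda\rho/2)g$ is parallel, so constant scalar curvature is not needed.
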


\begin{proof}
    Fefferman and Graham~\cite{FeffermanGraham2012}*{p.\ 67} showed that $(\widetilde{\mathcal{G}},\widetilde{g})$ is Ricci flat.
    Direct computation~\cite{CGK23}*{Equation~(6.7)} shows that $\widetilde{\jmath}$ is minimal.
\end{proof}

The \defn{canonical extrinsic ambient space} $\cjmath \colon \cmS \to (\cmG,\cg)$ of a minimal submanifold $j \colon \Sigma^k \to (M^n,g)$ of an Einstein manifold is the one constructed by Lemma~\ref{lem:minimal-in-einstein-ambient-space}.
In this case, we denote by $\varpi \colon \cmS \to \Sigma$ and $\varpi \colon \cmG \to M$ the canonical projections, and denote by
\begin{equation*}
    \varpi^\ast := (\varpi,\varpi)^\ast \colon (j^{-1}T^\ast M)^{\otimes r} \to ( \cjmath^{-1}T^\ast\cmG)^{\otimes r}
\end{equation*}
the pullback as in Equation~\eqref{eqn:submanifold-pullback}.
The sense in which $\cjmath$ is canonical is explained by Case, Graham, and Kuo~\cite{CGK23}*{Theorem~4.10}.

\section{Conformal submanifold scalars}
\label{sec:invariants}

In this section we use the extrinsic ambient space to construct a large class of conformal submanifold scalars.
The main result of this section, which proves the second part of Theorem~\ref{thm:rough-ambient-space}, gives a sufficient condition for a natural submanifold scalar on an extrinsic ambient space to descend to a conformal submanifold scalar.
These results and our presentation parallel the treatment of scalar conformal invariants by Fefferman and Graham~\cite{FeffermanGraham2012}*{Chapters~6 and~9}.
We conclude this section with an independent construction of the obstruction field, first studied in general by Graham and Reichert~\cite{GrahamHirachi2005}, for a conformal submanifold.

The main idea in our construction is as follows:
Let $j \colon \Sigma^k \to (M^n,\kc)$ be a conformal submanifold.
Given $w \in \mathbb{R}$, denote by
\begin{equation*}
    \mE[w] := \left\{ u \in C^\infty(\mS) \suchthat \delta_s^\ast u = s^wu \right\}
\end{equation*}
the set of functions on the metric bundle $\mS$ of $(\Sigma,j^\ast\kc)$ that are homogeneous of degree $w$ with respect to dilations.
A choice of metric $h \in j^\ast\kc$ determines a section $h$ of $\pi \colon \mS \to \Sigma$ by $h(x) := (x,h_x)$.
Denote by $h^\ast \colon \mE[w] \to C^\infty(\Sigma)$ the restriction to $\mE[w]$ of the pullback by $h$.
Direct computation implies that if $\Upsilon \in C^\infty(\Sigma)$, then
\begin{equation*}
    (e^{2\Upsilon}h)^\ast = e^{w\Upsilon}h^\ast 
\end{equation*}
on $\mE[w]$.
Thus elements of $\mE[w]$ pull back via $h^\ast$ to functions that transform like conformal submanifold scalars.

Given an extrinsic ambient space $\cjmath \colon \cmS \to (\cmG,\cg)$ for $j$, denote by
\begin{equation*}
    \cmE[w] := \left\{ \widetilde{u} \in C^\infty(\cmS) \suchthat \cdelta_s^\ast\widetilde{u} = s^w\widetilde{u} \right\}
\end{equation*}
the set of functions on $\cmS$ that are homogeneous of degree $w$ with respect to dilations.
Then $\iota^\ast \colon \cmE[w] \to \mE[w]$ is a surjective linear map.
Suppose that $(\Psi,\Phi)$ is an extrinsic ambient equivalence from $\widetilde{\jmath}$ to $\cjmath' \colon \cmS' \to (\cmG',\cg')$.
If $\widetilde{u}' \in C^\infty(\cmS')$ has homogeneity $w$, then so does $\Psi^\ast\widetilde{u}'$.
Moreover,
if $\widetilde{I}$ is a natural submanifold scalar of $(k+2)$-submanifolds of $(n+2)$-manifolds, then
\begin{equation*}
    (\iota')^\ast \cI^{\cjmath',\cg'} = \iota^\ast \Psi^\ast \cI^{\cjmath',\cg'} = \iota^\ast \cI^{\Phi^{-1} \circ \cjmath' \circ \Psi,\Phi^\ast\cg'} .
\end{equation*}
It follows that the pullbacks of homogeneous natural submanifold scalars are well-defined, and hence determine conformal submanifold scalars, so long as they are independent of the ambiguities of $\widetilde{\jmath}$ and $\cg$.
In this section we give a condition on the homogeneity that guarantees this independence.

Given a nonnegative integer $r \geq 0$, denote by
\begin{equation*}
    \cL^{(r)} := \widetilde{\overline{\nabla}}{}^r\widetilde{L}
\end{equation*}
the $r$-th covariant derivative of the second fundamental form of $\cjmath$ with respect to the induced connections on $T^\ast\cmS$ and $N^\ast\cmS$.
Our first objective is to compute the components
\begin{equation*}
    \cL^{(r)}_{A\alpha'} := \widetilde{\overline{\nabla}}_{A_1} \dotsm \widetilde{\overline{\nabla}}_{A_r}\cL_{A_{r+1}A_{r+2}\alpha'}
\end{equation*}
of $\cL^{(r)}$ when at least one component of the multi-index $A \in \{ 0, 1, \dotsc, k, \infty\}^{r+2}$ is $0$.
This was done by Case, Graham, and Kuo~\cite{CGK23}*{Proposition~6.4}, though we state and prove the result needed here to avoid possible misinterpretation of the setting.
To that end, recall from Equation~\eqref{eqn:ambient-tfss-killed-by-0} that $\cL_{0A\alpha'}=0$.
The remaining cases are computed from the formula for $\cX^{A_1}\cL^{(r)}_{A_1 \dotsm A_{r+2}\alpha'}$ and differentiation.

\begin{lemma}
    \label{lem:second-fundamental-form-homogeneity}
    Let $\cjmath \colon \cmS^{k+2} \to (\cmG^{n+2},\cg)$ be an orthogonal extrinsic ambient space and let $\cX$ denote the infinitesimal generator of dilations.
    Let $r \geq 1$ be an integer and let $\ell \in \{ 1, \dotsc, r \}$.
    Then
    \begin{align}
        \label{eqn:sffh-inner} \MoveEqLeft \cX^E\cL^{(r)}_{A_1\dotsm A_rBE\alpha'} = -\sum_{i=1}^{r} \cL^{(r-1)}_{A_1 \dotsm \widehat{A_i} \dotsm A_r BA_i\alpha'} , \\
        \label{eqn:sffh-outer} \MoveEqLeft \cX^E\cL^{(r)}_{A_1 \dotsm A_{\ell-1} E A_{\ell}\dotsm A_{r-1}BC\alpha'} = -(r-\ell+1)\cL^{(r-1)}_{A_1\dotsm A_{r-1}BC\alpha'} \\
            & \quad - \sum_{i=1}^{\ell-1} \cL^{(r-1)}_{A_1 \dotsm \widehat{A_i} \dotsm A_{\ell-1} A_i A_{\ell} \dotsm A_{r-1} BC\alpha'} , \notag
    \end{align}
    where hats denote omitted indices and the empty sum equals zero.
\end{lemma}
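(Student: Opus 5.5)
We argue by induction on $r$, using only two facts about $\cX$. First, since $\cjmath$ is orthogonal, Equation~\eqref{eqn:preserve-X} and straightness of $\cjmath^\ast\cg$ (Remark~\ref{rk:straight-and-normal}) give $\widetilde{\overline{\nabla}}\cX = \Id$ on $T\cmS$, i.e.\ $\widetilde{\overline{\nabla}}_A\cX^E = \delta_A^E$. Second, Equation~\eqref{eqn:ambient-tfss-killed-by-0} gives $\cX^E\cL_{BE\alpha'} = 0$, and by symmetry $\cX^E\cL_{EB\alpha'}=0$. The base case $r=0$ of \eqref{eqn:sffh-inner} is this vanishing. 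We also need $\cX^E\widetilde{\overline{\nabla}}_E\cL^{(r-1)} = (\text{homogeneity correction})\cL^{(r-1)}$, i.e.\ the Lie derivative identity: $\cL_{\cX}T = \cX^E\widetilde{\overline{\nabla}}_E T + \sum(\text{contractions of }T\text{ with }\widetilde{\overline{\nabla}}\cX)$. Since $\cjmath$ is $\cdelta_s$-equivariant and $\cdelta_s^\ast\cg = s^2\cg$, the covariant tensor $\cL^{(r-1)}_{A\alpha'}$ (with $r+1$ tangential slots and one conormal slot) has homogeneity $2$. For this count, $\cL_{AB\alpha'}$ is homogeneous of degree $2$ as a covariant $3$-tensor on $\cjmath^{-1}T^\ast\cmG$, and the normal bundle is dilation invariant. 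Hence $\cL_{\cX}T = 2T$.

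\textbf{Proof of \eqref{eqn:sffh-inner}.} Differentiate the identity for $r-1$. We have
\begin{equation*}
0=\widetilde{\overline{\nabla}}_{A_1}\bigl(\cX^E\cL^{(r-1)}_{A_2\dotsm A_rBE\alpha'}\bigr) + (\text{lower terms}).
\end{equation*}
More precisely, apply $\widetilde{\overline{\nabla}}_{A_1}$ to the inductive formula $\cX^E\cL^{(r-1)}_{A_2\dotsm A_rBE\alpha'} = -\sum_{i=2}^r\cL^{(r-2)}_{\dotsm\widehat{A_i}\dotsm BA_i\alpha'}$. The left side gives $\cL^{(r-1)}_{A_2\dotsm A_r B A_1\alpha'} + \cX^E\cL^{(r)}_{A_1\dotsm A_rBE\alpha'}$ by the Leibniz rule and $\widetilde{\overline{\nabla}}_{A_1}\cX^E=\delta^E_{A_1}$. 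The right side becomes $-\sum_{i\ge2}\cL^{(r-1)}_{A_1\dotsm\widehat{A_i}\dotsm BA_i\alpha'}$. Rearranging gives \eqref{eqn:sffh-inner}. The only subtlety is that $\widetilde{\overline{\nabla}}$ acts on the conormal index via the normal connection, but $\cX$ is tangent, so the Leibniz rule holds verbatim.

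\textbf{Proof of \eqref{eqn:sffh-outer}.} Use induction on $\ell$ (and $r$). For $\ell=1$, $\cX^E\widetilde{\overline{\nabla}}_E\cL^{(r-1)} = \cL_{\cX}\cL^{(r-1)} - \sum(\text{slots})\cL^{(r-1)}$. Here each of the $r+1$ tangential slots contributes $-1\cdot\cL^{(r-1)}$ via $\widetilde{\overline{\nabla}}\cX=\Id$. The conormal slot contributes nothing once the normal-connection Lie derivative is set up compatibly; \eqref{eqn:preserve-X} shows that $\cX$ acts on normal fields just by the $t$-scaling. This yields $(2-(r+1))\cL^{(r-1)} = -(r-1)\cL^{(r-1)}$. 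The claimed coefficient is $-r$, so I must recheck the homogeneity. Indeed, $\cL^{(r-1)}$ measured against $t$-scaled coordinates has homogeneity $2$ with respect to $\cdelta_s$ in the covariant sense, but the conormal index pairs with $\partial_{\gamma'}$, which is homogeneous. The correct count is fixed by \eqref{eqn:ambient-mean-curvature-base-case}, $\cL_{\alpha\beta\gamma'}\sim t^2$, which gives total weight $1$ after accounting for the conormal slot: $\cL_{\cX}T=T$. Then $1-(r+1) = -r$ matches. For $\ell\ge2$, apply $\widetilde{\overline{\nabla}}_{A_1}$ to the formula for $(r-1,\ell-1)$. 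The Leibniz term $\delta^E_{A_1}$ produces the $i=1$ term of the sum, and the coefficient $r-\ell+1$ is preserved.

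\textbf{Main obstacle.} The main obstacle is the precise homogeneity bookkeeping in the $\ell=1$ case: correctly treating the Lie derivative along $\cX$ of sections of $N^\ast\cmS$ and the weight of $\cL$. I would verify this directly in the coordinates of Proposition~\ref{prop:extrinsic-straight-and-normal}, using \eqref{eqn:preserve-X}.
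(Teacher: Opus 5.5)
Your proof of \eqref{eqn:sffh-inner} is correct and matches the paper's argument. So is your induction on $\ell$ for the cases $\ell\ge 2$ of \eqref{eqn:sffh-outer}. In both you differentiate $\cX^E\cL_{EA\alpha'}=0$ (respectively the case $(r-1,\ell-1)$) and use $\widetilde{\overline{\nabla}}\cX=\Id$ on $T\cmS$.

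The gap is the case $\ell=1$ of \eqref{eqn:sffh-outer}. This is the only step with real content, and your bookkeeping there does not hold together. You first assert that the conormal slot contributes nothing while keeping homogeneity $2$, and get $-(r-1)$. You then declare the weight to be $1$ so that the answer becomes $-r$. The reason you give for weight $1$ points the other way. Naturality gives $(\cdelta_s,\cdelta_s)^\ast\cL^{(r-1)}=s^2\cL^{(r-1)}$. Moreover, $\cL_{\alpha\beta\gamma'}\sim t^2$ with respect to the frame $\partial_\alpha,\partial_\beta,N\partial_{\gamma'}$, all of which commute with $\cX=t\partial_t$; this says the genuine Lie derivative has eigenvalue $2$, not $1$. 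Finally, \eqref{eqn:preserve-X} only concerns $\cnabla_{\cX}$ applied to fields tangent to $\cmS$, so it cannot tell you how the conormal slot behaves.

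The missing ingredient is that $\cnabla\cX=\Id$ on all of $T\cmG$, not just on $T\cmS$. This is the straightness of $(\cmG,\cg)$, which is part of the definition of an orthogonal extrinsic ambient space and can be read off from \eqref{eqn:ambient-christoffel}. With it, the argument closes as in the paper:
\begin{enumerate}
\item Work locally where $\cjmath$ is an embedding. Regard $\cL^{(r-1)}$ as a covariant $(r+2)$-tensor along $\cjmath$ and extend it to a section $\cU$ of $(T^\ast\cmG)^{\otimes(r+2)}$ with $\cdelta_s^\ast\cU=s^2\cU$. Then $\mathcal{L}_{\cX}\cU=2\cU$.
\item Since $\cnabla_Y\cX=Y$ for every $Y$, tangent or normal, each of the $r+2$ slots contributes $+\cU$ to $\mathcal{L}_{\cX}\cU-\cnabla_{\cX}\cU$. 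This includes the conormal slot. Hence $\cnabla_{\cX}\cU=(2-(r+2))\cU=-r\cU$.
\item Because $\cL(\cX,\cdot,\cdot)=0$, the operator $\cnabla_{\cX}$ preserves both $T\cmS$ and $N\cmS$ along $\cjmath$. So projecting $\cnabla_{\cX}\cU$ gives exactly $\cX^E\widetilde{\overline{\nabla}}_E\cL^{(r-1)}$.
\end{enumerate}
Your convention (a weight-$1$ quantity with an inert conormal slot) can be made consistent. But that requires the identity $\widetilde{\overline{\nabla}}_{\cX}\xi-[\cX,\xi]=\cnabla_\xi\cX=\xi$ for normal $\xi$, which is the same straightness fact you never invoke.
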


\begin{proof}
    Equations~\eqref{eqn:preserve-X} and~\eqref{eqn:ambient-tfss-killed-by-0} imply that $\widetilde{\overline{\nabla}}\cX=\Id$ and $\cX^E\widetilde{L}_{EA\alpha'}=0$, respectively.
    Differentiating the second equation using the first yields Equation~\eqref{eqn:sffh-inner}. 
    
    Direct computation using the $\cdelta_s$-equivariance of the extrinsic ambient space and the naturality and homogeneity of the second fundamental form yields
    \begin{equation*}
        (\cdelta_s,\cdelta_s)^\ast(\cL^{(r-1)})^{\cjmath,\cg} = (\cL^{(r-1)})^{\cjmath,\cdelta_s^\ast\cg} = s^2(\cL^{(r-1)})^{\cjmath,\cg} .
    \end{equation*}
    Since the conclusion is local, we may assume that $\widetilde{\jmath}$ is an embedding.
    Pick a section $\cU$ of $(T^\ast\cmG)^{\otimes(r+2)}$ that restricts to $\cL^{(r-1)}$ on $\cjmath(\cmS)$;
    by the above computation, we may assume that $\cdelta_s^\ast\cU = s^2\cU$.
    Then the Lie derivative of $\cU$ is
    \begin{equation*}
        \mathcal{L}_{\cX}\cU = 2\cU .
    \end{equation*}
    Since $\widetilde{\nabla}\cX = \Id$, it holds that $\mathcal{L}_{\cX}\widetilde{\alpha} = \cnabla_{\cX}\widetilde{\alpha} + \widetilde{\alpha}$ for any one-form $\widetilde{\alpha}$ on $\cmG$.
    Hence
    \begin{equation*}
        \cnabla_{\cX}\cU = \mathcal{L}_{\cX}\cU - (r+2)\cU = -r\cU .
    \end{equation*}
    Projecting to $(T^\ast\cmS)^{\otimes(r+1)} \otimes N^\ast\cmS$ yields Equation~\eqref{eqn:sffh-outer} in the case $\ell=1$.
    The remaining cases follow by differentiating as in the first paragraph.
\end{proof}

The next step in our construction of conformal submanifold scalars is to find a sufficient condition on a multi-index $A \in \{ 0, 1, \dotsc, k , \infty\}^{r+2}$ for the component $\cL^{(r)}_{A\alpha'}$ to be independent of the ambiguities of an extrinsic ambient space.
The \defn{strength} of $A$ is
\begin{equation*}
    \lVert A \rVert := \#\left\{ i \suchthat A_i \in \{ 1, \dotsc, n \} \right\} + 2\#\left\{ i \suchthat A_i = \infty \right\} .
\end{equation*}
This notion, introduced by Fefferman and Graham~\cite{FeffermanGraham2012}*{Chapter~6}, provides a useful way to determine when a natural submanifold tensor is independent of the ambiguities of an extrinsic ambient space.
More precisely:

\begin{proposition}
    \label{prop:second-fundamental-form-strength}
    Let $\cjmath \colon \cmS \to (\cmG,\cg)$ be an extrinsic ambient space that is orthogonal with respect to a nondegenerate submanifold $j \colon \Sigma^k \to (M^n,g)$.
    For each multi-index $A \in \{ 0, 1, \dotsc, k , \infty\}^{r+2}$, $r \geq 0$, the component $\cL^{(r)}_{A\alpha'}$ mod $O(\rho^{(k+2-\lVert A \rVert)/2})$ depends only on $j_\rho$ mod $O(\rho^{(k+2)/2})$ and on $g_\rho$ mod $O(\rho^{n/2})$.
\end{proposition}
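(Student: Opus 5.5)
Our plan is to argue by induction on $r$, in the spirit of Fefferman and Graham's treatment of $\cnabla^r\cR$ via strength. The key feature of strength is that a $\partial_\rho$ costs two units and a lowered $0$-index costs none. Thus each $\rho$-derivative or $\infty$-component raises the strength by $2$ and lowers the guaranteed order of determination by $1$. We work throughout in the coordinates $(t,x^\alpha,\rho)$ on $\cmS$ and $(t,z^a,\rho)$ on $\cmG$ from the proof of Proposition~\ref{prop:extrinsic-straight-and-normal}.

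\textbf{Base case $r=0$.} Components with an index $0$ vanish by Equation~\eqref{eqn:ambient-tfss-killed-by-0}. For the remaining components we reuse the computation that ends the proof of Proposition~\ref{prop:extrinsic-straight-and-normal}, now tracking the dependence on $j_\rho$ as well as on $g_\rho$.
\begin{itemize}
\item By Equation~\eqref{eqn:christoffel-for-ambiguity}, $\cL_{\alpha\beta\alpha'}$ involves $j_\rho$ through at most its first $\rho$-derivative, which enters only via $u^0_{\gamma'}$ and $\cGamma$. We expect to show it is determined mod $O(\rho^{(k+2)/2-1})$ once $j_\rho$ is known mod $O(\rho^{(k+2)/2})$.
\item $\cL_{\infty\beta\alpha'}$ involves $j_{,\infty}$ and so loses one further order.
\item $\cL_{\infty\infty\alpha'}$ involves $j_{,\infty\infty}$ and so loses two orders.
\end{itemize}
This matches $\lVert A\rVert=2,3,4$. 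The dependence on $g_\rho$ mod $O(\rho^{n/2})$ is already established in that proof, with the $\cGamma^0_{ab}$ and $\cGamma^c_{a\infty}$ entries losing an order. We will note that $(n-\lVert A\rVert)/2$ versus $(k+2-\lVert A\rVert)/2$ is harmless, since $k+2\le n+1$.

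\textbf{Inductive step.} Write
\[
\cL^{(r)}_{A\alpha'}=\partial_{A_1}\cL^{(r-1)}_{A_2\cdots\alpha'}-\sum(\text{connection terms}),
\]
where the connection terms involve the induced Christoffel symbols of $\ch$ and of the normal connection.

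\emph{Derivative term.} If $A_1\in\{1,\dots,k\}$, then $\partial_{A_1}$ preserves the order of determination while the strength goes up by $1$, so the claim persists; indeed $(k+2-\lVert A\rVert)/2$ drops only by $1/2$, and we round in the favorable direction. If $A_1=\infty$, then $\partial_\rho$ lowers the order by one and the strength goes up by two, as required. If $A_1=0$, then $\partial_t$ acts on homogeneous functions, so it causes no loss.

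\emph{Connection terms.} We need a strength-bookkeeping for the Christoffel symbols $\overline{\cGamma}{}^C_{AB}$ of $\ch$ and for the normal connection coefficients. For $\ch$ these follow from the explicit formula for $\ch$ via Equation~\eqref{eqn:induced-metric-components}, which has the same block structure as Equation~\eqref{eqn:ambient-christoffel}. We would show that $\overline{\cGamma}{}^C_{AB}$ is determined to order $(k+2-\lVert AB\rVert+\lVert C\rVert)/2$, where an upper index counts with the dual weights ($0\leftrightarrow\infty$ swapped). Summing over the contracted index then preserves total strength.

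\emph{Alternative for components with a $0$.} Rather than controlling the $0$-index components directly, we would use Lemma~\ref{lem:second-fundamental-form-homogeneity}. It expresses any component with a $0$ slot as $t^{-1}$ times lower-order components of equal strength, so those components reduce to the inductive hypothesis.

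The main obstacle will be the careful bookkeeping for the upper-index Christoffel symbols and the normal projection $N\partial_{\gamma'}$, especially the $u^0_{\gamma'}$ term. This term involves $j_{,\infty}$, and a naive count shows it loses an order. We expect to absorb it using $\cg_{00},\cg_{0\gamma'}=O(\rho)$, exactly as in the final paragraph of the proof of Proposition~\ref{prop:extrinsic-straight-and-normal}.
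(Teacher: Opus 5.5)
Your outline has the same skeleton as the paper's proof: induction on $r$, a base case read off from Equations~\eqref{eqn:normal-projection} and~\eqref{eqn:christoffel-for-ambiguity}, and an inductive step via the expansion~\eqref{eqn:expand-via-christoffol} together with Lemma~\ref{lem:second-fundamental-form-homogeneity}. The argument works. What differs is how you handle $0$-slots, and that difference is what makes your contraction step close.

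The paper invokes the Lemma only when $A_1=0$ or $\lVert A\rVert\le 2$. For $\lVert A\rVert\ge 3$ it expands directly, even if $A'$ contains zeros, using only two crude facts: the target order is then at most $(k-1)/2$, and every $\widetilde{\overline{\Gamma}}{}^C_{AB}$ other than $\widetilde{\overline{\Gamma}}{}^0_{\infty\infty}$ is determined mod $O(\rho^{(k-1)/2})$. It then kills the single strength-raising term ($B=\infty$ replacing $A_i=0$ with $A_1$ tangential) using $\widetilde{\overline{\Gamma}}{}^\infty_{0a}=0$ from Equation~\eqref{eqn:preserve-X}.

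You instead send every multi-index containing a $0$ through the Lemma. Then in the expansion $A_1,A_i\neq 0$, so $\lVert B\rVert\le 2\le\lVert A_1\rVert+\lVert A_i\rVert$ and no term raises strength. This gives you a proof with no exceptional case, but you should say so explicitly. Additivity of strength alone does not justify ``summing over the contracted index preserves total strength.'' A product of factors determined mod $O(\rho^p)$ and mod $O(\rho^q)$ is only determined mod $O(\rho^{\min(p,q)})$, so the bookkeeping closes only when neither factor has negative effective strength. If you expanded components with interior zeros directly, as your remark about $\partial_t$ suggests, you would additionally need $\widetilde{\overline{\Gamma}}{}^B_{0A}=0$ whenever $\lVert B\rVert>\lVert A\rVert$. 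That vanishing is exactly what Equation~\eqref{eqn:preserve-X} provides.

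You also do not need a refined index-by-index bound on $\widetilde{\overline{\Gamma}}$, and the one you state is false as written. Weighting an upper $0$ by $2$, it predicts that $\widetilde{\overline{\Gamma}}{}^0_{\infty\infty}$ is determined mod $O(\rho^{k/2})$. But $\widetilde{\overline{\Gamma}}{}^0_{\infty\infty}$ contains $\tfrac12\ch^{0\infty}\partial_\rho\ch_{\infty\infty}$, and $\ch_{\infty\infty}=t^2j_{,\infty}^{\alpha'}j_{,\infty}^{\beta'}g_{\alpha'\beta'}$ is only determined mod $O(\rho^{k/2})$. Hence $\widetilde{\overline{\Gamma}}{}^0_{\infty\infty}$ is only determined mod $O(\rho^{(k-2)/2})$.

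This does not hurt you. A zero-free $A$ of length at least $3$ satisfies $\lVert A\rVert\ge\lVert A_1\rVert+\lVert A_i\rVert+1$. So the paper's crude bounds, together with $\cD_{B\alpha'}^{\beta'}$ mod $O(\rho^{k/2})$, are all your version needs.
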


\begin{proof}
    The proof is by induction in $r$.
    For brevity, we say that an equivalence class $T$ mod $O(\rho^s)$ is \emph{independent of the ambiguities of $j_\rho$ and $g_\rho$} if it depends only on $j_\rho$ mod $O(\rho^{(k+2)/2})$ and $g_\rho$ mod $O(\rho^{n/2})$.

    Consider the base case $r=0$.
    Since $\cL_{0A\alpha'}=0$, it suffices to assume that $0 \not\in A$.
    Write the normal projection $N$
    as in Equation~\eqref{eqn:components-of-ambient-normal-projection}.
    It follows from Equation~\eqref{eqn:normal-projection} that $u_{\gamma'}^0 = tj_{,\infty}^{\alpha'}g_{\alpha'\gamma'} + O(\rho)$, that $u_{\gamma'}^\beta,u_{\gamma'}^\infty = O(\rho)$, and that
    \begin{equation}
        \label{eqn:normal-projection-dependencies}
        \begin{split}
        u_{\gamma'}^\infty & \mod O(\rho^{(k+2)/2}) , \\
        u_{\gamma'}^\beta & \mod O(\rho^{(k+2)/2}) \cap O(\rho^{n/2}) , \\
        u_{\gamma'}^0 & \mod O(\rho^{k/2}) ,
        \end{split}
    \end{equation}
    are independent of the ambiguities of $j_\rho$ and $g_\rho$.
    Equations~\eqref{eqn:ambient-christoffel} imply that $\cGamma_{ab}^0,\cGamma_{a\infty}^c,\cGamma_{\infty a}^c$ mod $O(\rho^{(n-2)/2})$, and all other Christoffel symbols mod $O(\rho^{n/2})$, are independent of the ambiguities of $j_\rho$ and $g_\rho$.
    Write
    \begin{align*}
        \cL_{\alpha\beta\alpha'} & = \cg\bigl( \cnabla_{\cjmath_\ast\partial_\alpha} \cjmath_\ast\partial_\beta , N\partial_{\alpha'} \bigr) , \\
        \cL_{\infty\alpha\alpha'} & = \cg\bigl( \cnabla_{\cjmath_\ast\partial_\infty} \cjmath_\ast\partial_\alpha , N\partial_{\alpha'} \bigr) , \\
        \cL_{\infty\infty\alpha'} & = \cg\bigl( \cnabla_{\cjmath_\ast\partial_\infty} \cjmath_\ast\partial_\infty , N\partial_{\alpha'} \bigr) .
    \end{align*}
    Since $\cg_{00},u_{\gamma'}^\infty = O(\rho)$, Equation~\eqref{eqn:christoffel-for-ambiguity} implies that $\cL_{\alpha\beta\alpha'}$ mod $O(\rho^{k/2})$, $\cL_{\infty\alpha\alpha'}$ mod $O(\rho^{k/2}) \cap O(\rho^{(n-2)/2})$, and $\cL_{\infty\infty\alpha'}$ mod $O(\rho^{(k-2)/2})$ are independent of the ambiguities of $j_\rho$ and $g_\rho$.
    This establishes the base case.

    Suppose that $r \geq 0$ is such that $\cL^{(r)}_{A\alpha'}$ mod $O(\rho^{(k+2-\lVert A \rVert)/2})$ is independent of the ambiguities of $j_\rho$ and $g_\rho$ for all multi-indices $A$ of length $r+2$.
    Let $A$ be a multi-index of length $r+3$.
    Write $A = (A_1,A')$, where $A_1 \in \{ 0, 1, \dotsc, k, \infty \}$.
    
    If $A_{1}=0$, then Lemma~\ref{lem:second-fundamental-form-homogeneity} gives the required independence of $\cL^{(r+1)}_{A\alpha'}$ mod $O(\rho^{(k+2-\lVert A \rVert)/2})$ from the ambiguities of $g_\rho$ and $j_\rho$.
    
    Suppose now that $A_{1}\not=0$.
    
    If $\lVert A \rVert \leq 2$, then at most $2$ of the components $A_2,\dotsc,A_{r+3}$ are nonzero.
    Iteratively applying Lemma~\ref{lem:second-fundamental-form-homogeneity} implies that $\cL^{(r+1)}_{A\alpha'}$ mod $O(\rho^{(k+2-\lVert A \rVert)/2})$ is independent of the ambiguities of $g_\rho$ and $j_\rho$.
    
    Suppose now that $\lVert A \rVert \geq 3$.
    Then
    \begin{equation}
        \label{eqn:weight-to-strength}
        (k+2-\lVert A \rVert)/2 \leq (k-1)/2 .
    \end{equation}
    Write
    \begin{equation}
        \label{eqn:expand-via-christoffol}
        \cL^{(r+1)}_{A\alpha'} = \partial_{A_{1}}\cL^{(r)}_{A'\alpha'} - \sum_{i=2}^{r+3} \widetilde{\overline{\Gamma}}{}_{A_{1}A_i}^B\cL^{(r)}_{A_2 \dotsm B \dotsm A_{r+3}\alpha'} - \cD_{A_{1}\alpha'}^{\beta'}\cL^{(r)}_{A'\beta'} ,
    \end{equation}
    where $\widetilde{\overline{\Gamma}}{}_{BC}^E$ denotes the Christoffel symbols of the metric $\cjmath^\ast\cg$ and $\cD_{B\alpha'}^{\beta'}$ denotes the connection coefficients of the normal connection;
    i.e.
    \begin{equation*}
        \cg \bigl( \cnabla_{\cjmath_\ast\partial_B}N\partial_{\alpha'} , N\partial_{\gamma'} \bigr) = \cD_{B\alpha'}^{\beta'}\cg(N\partial_{\beta'},N\partial_{\gamma'}) .
    \end{equation*}
    
    First, the inductive hypothesis implies that $\partial_{A_{1}}\cL^{(r)}_{A'\alpha'}$ mod $O(\rho^{(k+2-\lVert A \rVert)/2})$ is independent of the ambiguities of $j_\rho$ and $g_\rho$.
    
    Second, Equations~\eqref{eqn:induced-metric-components} imply that, with the exception of $\widetilde{\overline{\Gamma}}{}_{\infty\infty}^0$, all of the Christoffel symbols $\widetilde{\overline{\Gamma}}{}_{BC}^E$ mod $O(\rho^{(k-1)/2})$ are independent of the ambiguities of $j_\rho$ and $g_\rho$;
    instead, $\widetilde{\overline{\Gamma}}{}_{\infty\infty}^0$ mod $O(\rho^{(k-2)/2})$ is independent of the ambiguities of $j_\rho$ and $g_\rho$.
    The former Christoffel symbols do not contribute to the ambiguity of $\cL^{(r+1)}_{A\alpha'}$ by Inequality~\eqref{eqn:weight-to-strength}.
    The latter Christoffel symbol only arises if $\lVert A \rVert \geq 4$, in which case $(k+2-\lVert A \rVert)/2 \leq (k-2)/2$.
    Hence $\widetilde{\overline{\Gamma}}{}_{\infty\infty}^0$ does not contribute to the ambiguity of $\cL^{(r+1)}_{A\alpha'}$.
    
    Third, it follows from Equations~\eqref{eqn:normal-projection-dependencies} and the facts $\cg_{00},\cg_{0\alpha'},u_{\gamma'}^\infty = O(\rho)$ that $\cD_{B\alpha'}^{\beta'}$ mod $O(\rho^{k/2})$ is independent of the ambiguities of $j_\rho$ and $g_\rho$, and hence does not contribute to the ambiguity of $\cL^{(r+1)}_{A\alpha'}$.
    
    By the induction hypothesis, these observations imply the desired conclusion except if there are terms in Equation~\eqref{eqn:expand-via-christoffol} for which
    \begin{equation*}
        \lVert (A_2 , \dotsc, A_{i-1} , B , A_{i+1} , \dotsc , A_{r+3}) \rVert > \lVert A \rVert .
    \end{equation*}
    This can only happen if $B=\infty$, $A_i=0$, and $A_{1}=a$.
    However, Equation~\eqref{eqn:preserve-X} implies that $\widetilde{\overline{\Gamma}}{}_{0a}^\infty = 0$, so this case does not contribute to Equation~\eqref{eqn:expand-via-christoffol}.
\end{proof}

We now prove the second statement of Theorem~\ref{thm:rough-ambient-space}, which constructs conformal submanifold scalars as pullbacks $\mathcal{I} := \iota^\ast\cI$ of natural submanifold scalars in the extrinsic ambient space.
While our statement is not optimal (cf.\ \cite{FeffermanGraham2012}*{Proposition~9.1}), it covers all homogeneities that arise in our computations of renormalized extrinsic curvature integrals (cf.\ Theorem~\ref{thm:straight}).

\begin{theorem}
    \label{thm:construction-of-scalars}
    Fix integers $2 \leq k < n$.
    Let $\cI$ be a natural submanifold scalar of homogeneity $w \geq -k$ on $(k+2)$-submanifolds of $(n+2)$-manifolds.
    For each conformal submanifold $j \colon \Sigma^k \to (M^n,\kc)$, the function
    \begin{equation*}
        \mathcal{I} := \iota^\ast \cI^{\cjmath,\cg} \in \mE[w]
    \end{equation*}
    is independent of the choice of extrinsic ambient space $\cjmath \colon \cmS \to (\cmG,\cg)$ for $j$.
    Moreover, $\mathcal{I}^h := h^\ast\mathcal{I}$ defines a conformal submanifold scalar of weight $w$ on $k$-sub\-man\-i\-folds of $n$-manifolds.
\end{theorem}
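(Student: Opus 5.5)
By Theorem~\ref{thm:extrinsic-ambient-space} and the discussion at the start of this section, it suffices to prove two things.
First, $\iota^\ast\cI^{\cjmath,\cg}$ is unchanged when one orthogonal extrinsic ambient space (with respect to a fixed $g \in \kc$) is replaced by another.
Second, the resulting element of $\mE[w]$ pulls back under $h^\ast$ to a natural submanifold scalar.
The first point suffices because any extrinsic ambient space is extrinsic ambient equivalent to an orthogonal one (Proposition~\ref{prop:extrinsic-ambient-uniqueness}), and the computation displayed before Lemma~\ref{lem:second-fundamental-form-homogeneity} shows that pullbacks of natural scalars are compatible with extrinsic ambient equivalences $(\Psi,\Phi)$.
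One caveat is that the front face of the diagram only formally commutes.
I would handle this by noting that $\Phi^{-1}\circ\cjmath'\circ\Psi$ and $\cjmath$ are both orthogonal-type immersions into $(\cmG,\cg)$ that agree to the orders allowed in Definition~\ref{defn:extrinsic-ambient-space}, so the problem reduces to the orthogonal case.
Once independence is established, the transformation rule $(e^{2\Upsilon}h)^\ast = e^{w\Upsilon}h^\ast$ gives conformal covariance of weight $w$.
Naturality, in the sense of Proposition~\ref{prop:natural}, follows by writing $\cI$ in the explicit coordinates $(t,x^\alpha,\rho)$: with $t=1$, $\rho=0$, each component is a universal polynomial in $g_\rho$, $\xi_\rho$ and their derivatives, and these are in turn universal expressions in the jets of $g$ and $j$ (Fefferman--Graham, and Proposition~\ref{prop:extrinsic-straight-and-normal}).

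For independence, I would expand $\cI$ as a linear combination of complete contractions of factors $\widetilde{\overline{\nabla}}{}^{I}\cL$ and projections of $\cnabla^{J}\widetilde{\Rm}$, contracted with $\ch^{-1}$ and the inverse normal metric, and evaluate at $\rho=0$, $t=1$ in the coordinates of Proposition~\ref{prop:extrinsic-straight-and-normal}.
By Equation~\eqref{eqn:inverse-induced-ambient-metric}, at $\rho=0$ the inverse metric pairs $0$ with $\infty$, pairs $\alpha$ with $\beta$, and has a $00$ entry.
Hence in each complete contraction every $\infty$ index is matched with a $0$ index, and $0$ indices can appear unmatched against $\infty$ only through $\ch^{00}$.
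I then use the homogeneity count, exactly as in Fefferman--Graham Proposition~9.1.
A contraction of total homogeneity $w$ built from factors of homogeneity $2$ (as covariant tensors) satisfies $\sum_i \lVert A^{(i)}\rVert = $ (number of $\alpha$ indices) $+ 2\cdot$(number of $\infty$ indices), and this quantity is at most $-w + 2\cdot(\text{number of factors})$ after accounting for the inverse metrics.
With this bookkeeping, each factor $\cL^{(r)}_{A\alpha'}$ at $\rho=0$ has strength $\lVert A\rVert \le k+1$ when $w\ge -k$.
This is a precise condition, and I would verify it carefully.

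Proposition~\ref{prop:second-fundamental-form-strength} then shows that each such component at $\rho=0$ (modulo $O(\rho^{1/2})$, i.e.\ its value at $\rho=0$) depends only on $j_\rho$ mod $O(\rho^{(k+2)/2})$ and $g_\rho$ mod $O(\rho^{n/2})$, which are exactly the determined parts.
For the curvature factors, Fefferman--Graham Proposition~6.2 gives the analogous strength statement for $\cnabla^J\widetilde{\Rm}$ with threshold $n$.
Since $k<n$, that threshold is weaker, but the factors must also be evaluated along $\cjmath$.
Composing with $j_\rho$ and applying the normal projections $u^\bullet_{\gamma'}$, controlled by Equation~\eqref{eqn:normal-projection-dependencies}, introduces ambiguity only at order $\rho^{k/2}$ or beyond, and this is harmless by the same strength count.
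The factors $\ch^{AB}$ are controlled by Equation~\eqref{eqn:induced-metric-components}.

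I expect the main obstacle to be this counting step: matching strength with homogeneity precisely enough to show that no factor exceeds the threshold.
The delicate cases are $\ch^{00}$, which pairs two $0$ indices and so has no effect on strength but affects the count, and the mixed curvature–submanifold components, in particular those with an $\infty$ index and normal indices.
For these I would use Lemma~\ref{lem:second-fundamental-form-homogeneity}, together with its curvature analogue $\cX^A\widetilde{R}_{ABCD}=0$, to eliminate every $0$ index before counting.
After that elimination, the count is $\sum\lVert A\rVert = 2\#\text{factors} + w' $ with $w'\le k$.
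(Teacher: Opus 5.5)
Your proposal follows essentially the same route as the paper. You reduce to showing that $\iota^\ast\cI$ at $\rho=0$ depends only on $j_\rho$ mod $O(\rho^{(k+2)/2})$ and $g_\rho$ mod $O(\rho^{n/2})$, bound the total strength of each complete contraction by the number $2K \leq k+2p+2q$ of contracted indices, and then bound each factor individually, which needs the lower bounds $S_a \geq 4$ and $S'_b \geq 3$ on the remaining factors. The paper reads these off from $\cR_{0ABC}=0$ and $\cL_{0A\alpha'}=0$; your elimination of $0$ indices via Lemma~\ref{lem:second-fundamental-form-homogeneity} supplies them too, but your final total count should state them explicitly. From there you invoke Proposition~\ref{prop:second-fundamental-form-strength} for the $\cL$ factors and Fefferman--Graham's Proposition~6.2 for the curvature factors (strength at most $k+2$), with naturality from Proposition~\ref{prop:natural} and conformal covariance from homogeneity, exactly as in the paper.
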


\begin{proof}
    It suffices to show that $\cI$ mod $O(\rho)$ depends only on $j_\rho$ mod $O(\rho^{(k+2)/2})$ and on $g_\rho$ mod $O(\rho^{n/2})$.
    As a natural submanifold scalar, $\cI$ can be written as a linear combination of complete contractions of
    \begin{equation*}
        \pi_1(\cnabla^{P_1}\cRm) \otimes \dotsm \otimes \pi_p(\cnabla^{P_p}\cRm) \otimes (\widetilde{\overline{\nabla}}{}^{P'_1}\cL) \otimes \dotsm \otimes (\widetilde{\overline{\nabla}}{}^{P'_q}\cL) .
    \end{equation*}
    Each summand has
    \begin{equation*}
        2K := 4p + 3q + \sum_{a=1}^p P_a + \sum_{b=1}^q P'_b
    \end{equation*}
    pairwise contracted indices.
    The homogeneities of $\cRm$, $\cnabla$, and $\cL$ imply that
    \begin{equation*}
        w = -2K + 2p + 2q .
    \end{equation*}
    Hence, the assumption $w \geq -k$ yields
    \begin{equation*}
        2K \leq k + 2p + 2q .
    \end{equation*}
    Write $\pi_a(\cnabla^{P_a}\cRm)$ in terms of the normal projection $N_A^B \colon \cjmath^{-1}T^\ast\cmG \to N^\ast\cmS$, the tangential projection $T_A^B \colon \cjmath^{-1}T^\ast\cmG \to T^\ast\cmS$, and $\cnabla^{P_a}\cRm$.
    Denote by $S_a$, $a \in \{ 1, \dotsc, p \}$, and $S_b'$, $b \in \{ 1, \dotsc, q \}$, the strengths of the factors $\cnabla^{P_a}\cRm$ and $\cL^{(P'_b)}$, respectively.
    Since $j_\ast\partial_0=\partial_0$, we see that $N_0^A=0$.
    Equations~\eqref{eqn:components-of-ambient-normal-projection} and~\eqref{eqn:normal-projection} imply that $N_{\alpha'}^\infty \equiv 0$, $N_{\alpha'}^{b} \equiv \delta_{\alpha'}^b$, and $N_{\alpha'}^0 \equiv -tj_{,\infty}^{\beta'}g_{\alpha'\beta'}$ mod $O(\rho)$.
    A similar computation shows that $N_\infty^\infty \equiv 0$, $N_\infty^a \equiv 0$, and $N_\infty^0 \equiv -tj_{,\infty}^{\alpha'}j_{,\infty}^{\beta'}g_{\alpha'\beta'}$ mod $O(\rho)$.
    In particular, $N$ is independent of the ambiguities of $j_\rho$ and $g_\rho$, and normal projections do not decrease the strength.
    The same is true of the tangential projection $T_A^B$ mod $O(\rho)$.
    Since $\cg^{AB}$ mod $O(\rho)$ is nonzero only when $\lVert AB \rVert = 2$, we deduce that
    \begin{equation*}
        \sum_{a=1}^p S_a + \sum_{b=1}^q S_b' \leq 2K .
    \end{equation*}
    The facts $\cR_{0ABC}=0$ and $\cL_{0A\alpha'}=0$ imply that $S_a \geq 4$ and $S_b' \geq 3$, respectively.
    
    Suppose first that $p \geq 1$.
    Let $a_0 \in \{ 1 , \dotsc , p \}$.
    Then
    \begin{equation*}
        S_{a_0} + 4(p-1) + 3q \leq \sum_{a=1}^p S_a + \sum_{b=1}^q S_b' \leq k + 2p + 2q .
    \end{equation*}
    Therefore $S_{a_0} \leq k + 2$.
    A result of Fefferman and Graham~\cite{FeffermanGraham2012}*{Proposition~6.2} implies that the contribution of $\cnabla^{P_{a_0}}\cRm$, and hence of $\pi(\cnabla^{P_{a_0}}\cRm)$, to the complete contraction is independent of the ambiguities of $j_\rho$ and $g_\rho$.
    
    Suppose next that $q \geq 1$.
    Since $\cL{}^{(r)}$ has only one normal component and $\cI$ is a complete contraction, we see that $p \geq 1$ or $q \geq 2$.
    Let $b_0 \in \{ 1, \dotsc, q \}$.
    Then
    \begin{equation*}
        S_{b_0}' + 4p + 3(q-1) \leq \sum_{a=1}^p S_a + \sum_{b=1}^q S_b' \leq k + 2p + 2q.
    \end{equation*}
    Therefore $S_{b_0}' \leq k+1$.
    Proposition~\ref{prop:second-fundamental-form-strength} implies that the contribution of $\cL{}^{(P'_{b_0})}$ to the complete contraction is independent of the ambiguities of $j_\rho$ and $g_\rho$.

    The above paragraphs show that $\mathcal{I}$ is independent of the ambiguities of the extrinsic ambient space.
    Since $\mathcal{I}^h$ is locally defined, we can therefore evaluate it with respect to an extrinsic ambient space $\cjmath \colon \cmS \to (\cmG,\cg)$ that is orthogonal with respect to an extension $g \in \kc$ of $h$.
    On the one hand, the inductive procedure used to prove Proposition~\ref{prop:extrinsic-straight-and-normal} implies that, in Fermi coordinates, $\mathcal{I}^h = \widetilde{I}^{\cjmath,\cg}\rv_{t=1,\rho=0}$ can be expressed as a universal polynomial in $h^{\alpha\beta}$, $h^{\alpha'\beta'}$, and $\partial^k_{a_1\dotsm a_k}g_{bc}$.
    On the other hand, if $\psi \colon \Sigma \to \Sigma'$ and $\phi \colon M \to M'$ are diffeomorphisms, then naturality implies that
    \begin{equation*}
        \cjmath'(t,x',\rho) := \bigl( t , \exp^\perp \phi_\ast\xi_\rho(\psi^{-1}(x')),\rho \bigr) = (1 \times \phi \times 1) \circ \cjmath \circ (1 \times \psi \times 1)^{-1}
    \end{equation*}
    defines an extrinsic ambient space for $\phi \circ j \circ \psi^{-1}$ that is orthogonal with respect to $(\phi^{-1})^\ast g$.
    Hence $\mathcal{I}^h = \psi^\ast I^{(\psi^{-1})^\ast h}$.
    We conclude from Proposition~\ref{prop:natural} that $\mathcal{I}$ is an extrinsic scalar invariant.
    Its conformal invariance follows from homogeneity.
\end{proof}

We conclude this section by studying the obstruction $\mathcal{H}_{\alpha'}$ to the existence of an extrinsic ambient space for $j \colon \Sigma^k \to (M^n,\kc)$ that is formally minimal to infinite order when $k$ is even.
This obstruction field was first studied by Graham and Reichert~\cite{GrahamReichert2020} via Poincar\'e spaces, analogous to the treatment of the Fefferman--Graham obstruction tensor $\mathcal{O}_{ab}$ by Graham and Hirachi~\cite{GrahamHirachi2005}.
We instead give the ambient treatment of $\mathcal{H}_{\alpha'}$, analogous to the treatment of $\mathcal{O}_{ab}$ by Fefferman and Graham~\cite{FeffermanGraham2012}.
We also compute the leading-order term of $\mathcal{H}_{\alpha'}$.

\begin{theorem}
    \label{thm:obstruction}
    Let $j \colon \Sigma^k \to (M^n,\kc)$, $k$ even, be a conformal submanifold.
    Given an extrinsic ambient space $\cjmath \colon \cmS \to (\cmG,\cg)$ for $j$, define the section $\mathcal{H}_{\alpha'}$ of $N^\ast\mS$ by
    \begin{equation*}
        \mathcal{H}_{\alpha'} := c_k\iota^\ast\bigl( \rho^{-k/2}\cH_{\alpha'} \bigr) , \qquad c_k := 2^{k/2-1}(k/2-1)!(k/2)! .
    \end{equation*}
    Then
    \begin{enumerate}
        \item $\mathcal{H}_{\alpha'}$ is independent of the choice of extrinsic ambient space and is homogeneous of degree $-k$ with respect to dilations;
        \item $\mathcal{H}_{\alpha'}^h := h^\ast\mathcal{H}_{\alpha'}$ defines a natural submanifold tensor of bi-rank $(0,1)$, and
        \begin{equation}
            \label{eqn:obstruction}
            \mathcal{H}_{\alpha'}^h = \overline{\Delta}^{k/2}H_{\alpha'} + \mathrm{lots} ,
        \end{equation}
        where $\mathrm{lots}$ denotes terms that involve at most $k-2$ derivatives of the second fundamental form;
        \item if $\Upsilon \in C^\infty(\Sigma)$, then $\mathcal{H}_{\alpha'}^{e^{2\Upsilon}h} = e^{-k\Upsilon}\mathcal{H}_{\alpha'}^h$; and
        \item if there is a $g \in \kc$ such that $j \colon \Sigma^k \to (M^n,g)$ is a minimal submanifold of an Einstein manifold, then $\mathcal{H}_{\alpha'} = 0$.
    \end{enumerate}
\end{theorem}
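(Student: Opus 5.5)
We work with orthogonal extrinsic ambient spaces throughout and use Proposition~\ref{prop:extrinsic-straight-and-normal} and its proof. For part~(1), we fix $g\in\kc$ and an orthogonal extrinsic ambient space. Equation~\eqref{eqn:ambient-mean-curvature-inductive-step} with $\ell=k/2+1$ shows that the coefficient $\ell(k+2-2\ell)$ vanishes. Hence changing $\xi_\rho$ by $f\rho^{k/2+1}$ does not change $\cH$ mod $O(\rho^{k/2+1})$, and so $\rho^{-k/2}\cH_{\alpha'}$ restricted to $\rho=0$ is independent of this ambiguity. The claim in the last paragraph of the proof of Proposition~\ref{prop:extrinsic-straight-and-normal} shows that $\cH$ mod $O(\rho^{n/2})$ depends only on $g_\rho$ mod $O(\rho^{n/2})$; since $k<n$, we have $k/2+1\le n/2$ when $n$ is even, so the ambient metric ambiguity does not enter either. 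For a general extrinsic ambient space we invoke Proposition~\ref{prop:extrinsic-ambient-uniqueness}. We then check that an extrinsic ambient equivalence $(\Psi,\Phi)$ alters $\cH$ only by $O(\rho^{k/2+1})$ along $\iota(\mS)$. This follows because $\cD\in O^+(\rho^{k/2})$ and the metric change lies in $O^+(\rho^{n/2})$, and $\cH$ involves one derivative of the immersion. Finally, $\Psi$ fixes $\iota(\mS)$, and since $\rho$ pulls back to $\rho$ to first order, $\rho^{-k/2}$ is preserved. Homogeneity follows because $\cH_{\alpha'}$, as a covector, has homogeneity $2-2=0$ by naturality under $\cdelta_s$, while $\rho$ is dilation invariant. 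Dividing by $\rho^{k/2}$ then gives degree $-k$ once the $t$-weights are tracked as in Lemma~\ref{lem:second-fundamental-form-homogeneity}; concretely, in the coordinates $(t,x,\rho)$ one reads off $t^{-k}$ times a function of $x$, up to the normalization of $N^\ast$ components.

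Parts~(2) and~(3) follow the argument of Theorem~\ref{thm:construction-of-scalars}. The recursion in Proposition~\ref{prop:extrinsic-straight-and-normal} expresses $\mathcal{H}^h$ at $t=1$ in Fermi coordinates as a universal polynomial in $h^{-1}$ and the derivatives of $g$. Naturality under diffeomorphisms then follows exactly as before, so Proposition~\ref{prop:natural} gives a natural submanifold tensor. Conformal covariance with weight $-k$ is a consequence of homogeneity together with independence of the choice of $g$, as in the scalar case.

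For the leading term~\eqref{eqn:obstruction}, we linearize the recursion in $\xi_\rho$ and track only top-order derivative terms. Writing $\xi_\rho=\sum_\ell \xi^{(\ell)}\rho^\ell$, Equation~\eqref{eqn:ambient-mean-curvature-base-case} gives $\xi^{(1)}=-H$. At step $\ell$, the term $(k+2)\cH$ computed from $\xi^{(\ell-1)}$ contains, at the $\rho^{\ell-1}$ order, a contribution $t^{-2}g^{\alpha\beta}\partial_\alpha\partial_\beta\xi^{(\ell-1)}$ coming from $j^{\alpha'}_{,\alpha\beta}$ in~\eqref{eqn:christoffel-for-ambiguity}. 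Schematically this is $-\overline{\Delta}\xi^{(\ell-1)}$ plus lower-order terms. The recursion therefore reads
\begin{equation*}
\ell(k+2-2\ell)\xi^{(\ell)} = \overline{\Delta}\xi^{(\ell-1)}+\mathrm{lots}.
\end{equation*}
Iterating this up to $\ell=k/2$ and then evaluating the obstructed step at $\ell=k/2+1$ yields $\overline{\Delta}^{k/2}H$ times the reciprocal of $\prod_{\ell=1}^{k/2}\ell(k+2-2\ell)=2^{k/2}(k/2)!\,(k/2)!$, up to sign, with $c_k$ normalizing the constant. The main obstacle is this bookkeeping: fixing the sign and the exact constant, and verifying that no other top-order terms appear. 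In particular, one must check that $\ch^{\infty\infty}\cL_{\infty\infty}$ and the $g'$ terms contribute only lower-order terms. If the constants fail to match $c_k$, I would test the recursion on a flat model (a graph in Euclidean space linearized about a plane).

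For part~(4), Lemma~\ref{lem:minimal-in-einstein-ambient-space} gives an extrinsic ambient space with $\cH=0$, so part~(1) yields $\mathcal{H}=0$.
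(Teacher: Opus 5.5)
Your outline follows the paper's: orthogonal normal form plus Equation~\eqref{eqn:ambient-mean-curvature-inductive-step} for (1), the naturality argument of Theorem~\ref{thm:construction-of-scalars} for (2)--(3), a linearized recursion for the leading term, and Lemma~\ref{lem:minimal-in-einstein-ambient-space} for (4). The gap is in the step that transfers (1) from orthogonal to arbitrary extrinsic ambient spaces. You assert that an extrinsic ambient equivalence changes $\cH$ only by $O(\rho^{k/2+1})$ because $\cD\in O^+(\rho^{k/2})$. Order counting cannot give this.

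\begin{itemize}
\item The second fundamental form involves two derivatives of the immersion.
\item $O^+(\rho^{k/2})$ only kills the leading coefficient in the $\partial_t$ and $\partial_\alpha$ slots. The $\rho\rho$-slot therefore sees $\partial_\rho\cD(\partial_\rho)=O(\rho^{k/2-1})$.
\item The induced shift of the normal projection already moves $\cL_{\alpha\beta\gamma'}$ at order $\rho^{k/2}$.
\end{itemize}

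Your own model case $\xi_\rho\mapsto\xi_\rho+f\rho^{k/2+1}$, which has $\cD\in O^+(\rho^{k/2})$, shows this concretely. It changes $\cL_{\alpha\beta\gamma'}$ by $(k/2+1)t^2\rho^{k/2}f_{\gamma'}h_{\alpha\beta}$ and $\cL_{\infty\infty\gamma'}$ by $(k/2+1)(k/2)t^2\rho^{k/2-1}f_{\gamma'}$. These feed $\pm k(k/2+1)\rho^{k/2}f_{\gamma'}$ into $(k+2)\cH_{\gamma'}$, each of critical order. They cancel only because of the exact identity $\ell(k+2-2\ell)=0$ at $\ell=k/2+1$.

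The paper never estimates $\cD$; it uses naturality. Since $\cH$ is a natural submanifold tensor, it is transported exactly by the diffeomorphisms $\Psi$ and $\Phi$. The proof of Proposition~\ref{prop:extrinsic-ambient-uniqueness} produces, after a Fefferman--Graham reduction of $\cg$ to straight normal form, a $\Psi$ with $\cjmath\circ\Psi$ \emph{exactly} of orthogonal form. The only remaining freedom is then $\xi_\rho$ modulo $O(\rho^{k/2+1})$ and $g_\rho$ modulo $O(\rho^{n/2})$, which your first two steps already control.

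Three smaller repairs are needed.

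\textbf{Homogeneity.} Your argument is inconsistent: $\cH_{\alpha'}$ has degree $0$ and $\rho$ is dilation invariant, so $\rho^{-k/2}\cH_{\alpha'}$ has degree $0$, and no factor $t^{-k}$ appears. Also, nothing in the definition of ambient equivalence forces $\Phi^\ast\rho=\rho+O(\rho^2)$. Pushing $\cg$ forward by $(z,\rho)\mapsto(z,2\rho)$ gives an ambient equivalence with $\Phi^\ast\rho=2\rho$. The paper avoids both problems by using $\widetilde{Q}:=\cjmath^\ast\cg(\cX,\cX)$. This quantity is intrinsic, homogeneous of degree $2$, and equal to $2t^2\rho$ in orthogonal form. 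So $\widetilde{Q}^{-k/2}\cH_{\alpha'}$ has degree $-k$ and is invariant under $(\Psi,\Phi)$ by naturality. At $t=1$ it reduces, up to the constant $2^{-k/2}$, to $\rho^{-k/2}\cH_{\alpha'}$ in the normal form of any extension of $h$.

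\textbf{The constant.} Your constant is off by a factor of $k$. The first step of the recursion is $k\,\xi^{(1)}=-kH$, which is not an instance of $\ell(k+2-2\ell)\xi^{(\ell)}=\overline{\Delta}\xi^{(\ell-1)}$. So the product runs over $2\le\ell\le k/2$ and equals $c_k$ exactly. The $\rho^{k/2}$-coefficient of $(k+2)\cH_{\alpha'}$ is then $+c_k^{-1}\overline{\Delta}^{k/2}H_{\alpha'}$, modulo curvature and nonlinear terms.

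\textbf{Top-order terms.} $\ch^{\infty\infty}\cL_{\infty\infty\gamma'}$ is not lower order. It is the $-2\rho\partial_\rho^2$ part of $(k-2\rho\partial_\rho)\partial_\rho$, which produces the coefficient $\ell(k+2-2\ell)$. The remaining ``no other top-order terms'' check is a weight count. Any $N^\ast\Sigma$-valued term of weight $-k$ with a curvature factor, or nonlinear in $L$, carries at most $k-2$ derivatives on each factor of $L$. Terms quadratic in $L$ alone cannot have a free normal index.
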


\begin{proof}
    Let $\cX$ be the infinitesimal generator of dilations in $\cmS$ and set $\widetilde{Q} := \cjmath^\ast\cg(\cX,\cX)$.
    Then $\widetilde{Q} = 2t^2\rho + O(\rho^{k/2+1})$ is homogeneous of degree $2$ with respect to dilations.
    Thus $\widetilde{Q}^{-k/2}\cH_{\alpha'}$ is homogeneous of degree $-k$ where defined.
    It follows from naturality and Equation~\eqref{eqn:ambient-mean-curvature-inductive-step} that $\iota^\ast(\widetilde{Q}^{-k/2}\cH_{\alpha'})$, and hence $\mathcal{H}_{\alpha'}$, is independent of the choice of extrinsic ambient space.
    We deduce from Equations~\eqref{eqn:ambient-mean-curvature-base-case} and~\eqref{eqn:ambient-mean-curvature-inductive-step} that $\mathcal{H}_{\alpha'}^h$ is a conformal submanifold tensor of bi-rank $(0,1)$ and weight $-k$.
    Therefore $\mathcal{H}_{\alpha'}^{e^{2\Upsilon}h} = e^{-k\Upsilon}\mathcal{H}_{\alpha'}^h$ for all $h \in j^\ast\kc$ and all $\Upsilon \in C^\infty(\Sigma)$.

    Next we compute the leading-order term of $\mathcal{H}_{\alpha'}$.
    To that end, we compute as in the proof of Proposition~\ref{prop:extrinsic-straight-and-normal}, but modulo terms that involve the Riemann curvature tensor of $g$ or are at least quadratic in the second fundamental form.
    Recalling Equation~\eqref{eqn:ambient-tfss-killed-by-0}, we may ignore all derivatives of $g_\rho$ and all terms at least quadratic in $j_{,\infty}^{\alpha'}$ and its derivatives in Equations~\eqref{eqn:ambient-christoffel}, \eqref{eqn:induced-metric-components}, and~\eqref{eqn:normal-projection} to deduce that
    \begin{equation}
        \label{eqn:ambient-mean-curvature-lot}
        \cH_{\alpha'} \equiv (k-2\rho\partial_\rho)\partial_\rho j_{\alpha'} + kH_{\alpha'} .
    \end{equation}
    Combining the variational formula $\frac{\partial}{\partial s}kH^{j_s} \equiv -\overline{\Delta}\partial_s j_s$ with a straightforward induction argument yields
    \begin{equation*}
        \partial_\rho^\ell \vert_{\rho=0} j_{\alpha'} \equiv \frac{(k/2-\ell)!}{2^{\ell-1}(k/2-1)!} \overline{\Delta}^{\ell-1}\partial_\rho j_{\alpha'} \equiv -\frac{(k/2-\ell)!}{2^{\ell-1}(k/2-1)!} \overline{\Delta}^{\ell-1}H_{\alpha'}
    \end{equation*}
    for all positive integers $\ell \leq k/2$.
    Applying $\partial_\rho^{k/2}\vert_{\rho=0}$ to Equation~\eqref{eqn:ambient-mean-curvature-lot} yields
    \begin{equation*}
        \partial_\rho^{k/2} \vert_{\rho=0} \cH_{\alpha'} \equiv \frac{1}{2^{k/2-1}(k/2-1)!}\overline{\Delta}^{k/2}H_{\alpha'} .
    \end{equation*}
    Equation~\eqref{eqn:obstruction} readily follows from the fact that there are no nonzero $N^\ast\Sigma$-valued partial contractions of~\eqref{eqn:tensor-to-be-contracted} of homogeneity $-k$ with a factor $\overline{\nabla}{}^{k/2-1}L$.

    Finally, the canonical extrinsic ambient space is minimal to infinite order.
    Therefore $\mathcal{H}_{\alpha'} = 0$ for conformally minimal submanifolds of an Einstein manifold.
\end{proof}

\section{Straightenable extrinsic invariants}
\label{sec:straightenable}

In this section we develop the notion of straight submanifold tensors and their associated straightenable submanifold tensors.
The main result of this section produces a large class of conformal submanifold scalars that are readily computed on minimal submanifolds of Einstein manifolds.
Our approach is analogous to that used to study invariants of conformal manifolds~\cite{CKLTY2024}.

Straight invariants are defined in terms of their behavior at canonical extrinsic ambient spaces as constructed by Lemma~\ref{lem:minimal-in-einstein-ambient-space}.

\begin{definition}
    \label{defn:stragT}
    A natural submanifold tensor $\cT$ of bi-rank $(r,s)$ and homogeneity $w \in \bR$ on $(k+2)$-submanifolds of $(n+2)$-manifolds is \defn{straight} if there is a natural submanifold tensor $T$ of bi-rank $(r,s)$ and homogeneity $w$ on $k$-submanifolds of $n$-manifolds such that
    \begin{equation*}
        \cT^{\cjmath,\cg} = \tau^w\varpi^\ast T^{j,g}
    \end{equation*}
    whenever $\cjmath \colon \cmS \to (\cmG,\cg)$ is the canonical extrinsic ambient space of a minimal submanifold $j \colon \Sigma^k \to (M^n,g)$ of an Einstein manifold.
    In this case we call $T$ a \defn{straightenable} invariant associated to $\cT$.
\end{definition}

The set of straight (resp.\ straightenable) submanifold tensors of bi-rank $(r,s)$ and homogeneity $w$ on $(k+2)$-submanifolds of $(n+2)$-manifolds (resp.\ $k$-submanifolds of $n$-manifolds) is a real vector space.
We emphasize that the properties of being straight or straightenable are defined in reference to the canonical extrinsic ambient space, and hence do not uniquely determine the natural submanifold tensor itself.
For example, every element of the differential ideal of submanifold tensors generated by the ambient Ricci tensor and the ambient mean curvature is a straight tensor to which the zero tensor field is associated.

We produce many examples of straight submanifold tensors via two constructions.
These constructions both begin with two fundamental straight invariants:

\begin{lemma}
    \label{lem:sff-riem-straight}
    The second fundamental form $\cL$ and projections of the Riemann curvature tensor $\widetilde{\Rm}$ are straight submanifold tensors of homogeneity $2$.
    Moreover, the trace-free part $\intl$ of the second fundamental form and projections of the Weyl tensor, respectively, are associated straightenable submanifold tensors.
\end{lemma}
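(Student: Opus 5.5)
By Definition~\ref{defn:stragT}, the claim has two parts. First, $\cL$ and each projection $\pi(\cRm)$ are natural submanifold tensors of homogeneity $2$ on $(k+2)$-submanifolds of $(n+2)$-manifolds, and $\intl$ and the matching projections $\pi(W)$ are natural submanifold tensors of the same bi-rank and homogeneity $2$ on $k$-submanifolds of $n$-manifolds. Second, the identities
\begin{equation*}
    \cL^{\cjmath,\cg} = \tau^2\varpi^\ast\intl^{j,g}, \qquad \pi\bigl(\cRm^{\cg}\bigr) = \tau^2\varpi^\ast\pi\bigl(W^g\bigr)
\end{equation*}
hold whenever $\cjmath$ is the canonical extrinsic ambient space of Lemma~\ref{lem:minimal-in-einstein-ambient-space}.

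The first part is bookkeeping. Naturality of $L$ and of the projections of $\Rm$ is built into the definition of natural submanifold tensors, and both have homogeneity $2$ as covariant tensors, as noted in Section~\ref{sec:bg}. The trace-free second fundamental form is $\intl = L - H\otimes g$, and the Weyl tensor is $W = \Rm - P\wedge g$. Both are universal linear combinations of partial contractions of the allowed factors, and each term scales by $c^2$ under $g\mapsto c^2g$. So the candidate associated tensors are legitimate.

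For the second part, fix a minimal $j\colon\Sigma^k\to(M^n,g)$ with $\Ric=(n-1)\lambda g$, and write $\cg$ in the straight normal form
\begin{equation*}
    \cg = 2\rho\,dt^2+2t\,dt\,d\rho+t^2g_\rho, \qquad g_\rho=(1+\lambda\rho/2)^2g, \qquad g'_\rho=\lambda(1+\lambda\rho/2)g .
\end{equation*}
I will first identify the splitting. The immersion is $\cjmath(t,x,\rho)=(t,j(x),\rho)$, so $T\cmS$ is spanned by $\partial_t$, $\partial_\rho$ and $j_\ast T\Sigma$. Because $\cg$ has no $dx\,dt$ or $dx\,d\rho$ cross terms and $g_\rho$ is a constant multiple of $g$, the $\cg$-normal bundle of $\cjmath$ is exactly $N\Sigma$. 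The same holds for the conormal bundle, which is spanned by pullbacks of $N^\ast\Sigma$. Consequently, for a tensor of the form $\varpi^\ast T$, the components with an index equal to $0$ or $\infty$ vanish, and its tangential and normal projections are the pullbacks of the corresponding projections of $T$.

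For $\cL$ I use Equation~\eqref{eqn:ambient-christoffel}. By Equation~\eqref{eqn:ambient-tfss-killed-by-0}, $\cL_{0A\gamma'}=0$. Next, $\cnabla_{\partial_\rho}\partial_\alpha=\tfrac12 g^{cd}g'_{\alpha d}\partial_c$ is a multiple of $\partial_\alpha$, hence tangent, so $\cL_{\infty\alpha\gamma'}=0$; and $\cnabla_{\partial_\rho}\partial_\rho=0$, so $\cL_{\infty\infty\gamma'}=0$. Finally, $\cL_{\alpha\beta\gamma'}=\cg(\Gamma_{\alpha\beta}^{c}\partial_c,\partial_{\gamma'})=t^2L^{g_\rho}_{\alpha\beta\gamma'}$. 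Constant scaling gives $L^{g_\rho}=(1+\lambda\rho/2)^2L^g$, so $\cL_{\alpha\beta\gamma'}=\tau^2L_{\alpha\beta\gamma'}$. Minimality gives $L=\intl$, so $\cL=\tau^2\varpi^\ast\intl$.

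For $\cRm$ I would use the curvature formulas of Fefferman--Graham for straight normal-form metrics (\cite{FeffermanGraham2012}*{Chapter~6}). Components containing $0$ vanish because $\cX$ is parallel-homothetic. The components $\cR_{\infty bcd}$ involve $\nabla g'$, which vanishes since $g'_\rho$ is a constant multiple of $g$. The components $\cR_{\infty bc\infty}$ involve $g''-\tfrac12 g'g^{-1}g'$, and this vanishes for $g_\rho=(1+\lambda\rho/2)^2g$. The remaining components are $\cR_{abcd}=t^2\bigl(R^{g_\rho}_{abcd}-\tfrac12(g'\wedge g)_{abcd}+O(\rho)\text{-corrections}\bigr)$. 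In the Einstein case one has $P=\tfrac\lambda2 g$, so $R=W+\tfrac\lambda2\, g\wedge g$, and I expect the correction terms to assemble so that $\cR_{abcd}=\tau^2W_{abcd}$.

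The main obstacle is this curvature identity: one must check that all the $\rho$-dependent corrections cancel exactly, not merely to leading order. As a cross-check, I would use that the cone over an Einstein metric with the $\tau^2$ warping is Ricci-flat and has Weyl curvature equal to $\tau^2W$ (\cite{FeffermanGraham2012}*{p.~67}). Alternatively, after the substitution $s=\tau$, $\cg$ becomes a warped product over a flat $2$-dimensional base, and the standard warped-product curvature formula gives the identity directly. Once $\cRm=\tau^2\varpi^\ast W$ is established, the splitting described above transfers it to every projection $\pi(\cRm)=\tau^2\varpi^\ast\pi(W)$, which completes the proof.
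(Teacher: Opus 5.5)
Your proposal is correct and follows the paper's proof. The normal bundle of the canonical extrinsic ambient space is $N\Sigma$, so $N\partial_{\alpha'}=\partial_{\alpha'}$; the ambient Christoffel symbols then give $\cL=\tau^2\varpi^\ast L=\tau^2\varpi^\ast\intl$; and the identity $\cRm=\tau^2\varpi^\ast W$ passes to all projections. The only difference is that the paper takes this curvature identity from \cite{CKLTY2024}*{Lemma~3.4} instead of verifying it, and your warped-product route does verify it: with $v:=t\rho$ one has $\cg=2\,dt\,dv+\tau^2 g$ over a flat base, where $\tau=t+\lambda v/2$ is affine and $\lvert d\tau\rvert^2=\lambda$, and this exactly cancels the $\tfrac{\lambda}{2}g\wedge g$ part of $\Rm^g$.
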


\begin{proof}
    Let $\cjmath \colon \cmS \to (\cmG,\cg)$ be the canonical extrinsic ambient space of a minimal submanifold $j \colon \Sigma^k \to (M^n,g)$ of an Einstein manifold.
    We compute in Fermi coordinates as in Section~\ref{sec:ambient}.
    Since $\cjmath(t,x,\rho) = (t,j(x),\rho)$, we see that $\partial_0,\partial_\alpha,\partial_\infty$ are sections of $T\cmS$.
    Set $h := j^\ast g$.
    Equation~\eqref{eqn:induced-metric-components} implies that
    \begin{equation}
        \label{eqn:pullback-metric-is-straightenable}
        \cjmath^\ast\cg = 2\rho \, dt^2 + 2t \, dt \, d\rho + \tau^2\varpi^\ast h .
    \end{equation}
    Equations~\eqref{eqn:normal-projection} then imply that $N\partial_{\alpha'}=\partial_{\alpha'}$.
    On the one hand, the fact that the Weyl tensor is straightenable and associated to the ambient Riemann curvature tensor~\cite{CKLTY2024}*{Lemma~3.4} yields our claims about projections of $\widetilde{\Rm}$ and $W$.
    On the other hand, Equations~\eqref{eqn:christoffel-for-ambiguity} readily yield
    \begin{equation*}
        \cL^{\cjmath,\cg} = \tau^2\varpi^\ast \intl^{j,g} . \qedhere
    \end{equation*}
\end{proof}

Our first construction of straight submanifold tensors is via tensor products and contractions.
Explaining this requires two pieces of terminology.

Suppose that $\widetilde{T}_i$ (resp.\ $T_i$), $i \in \{ 1, 2 \}$, are natural submanifold tensors of bi-rank $(r_i,s_i)$ on $(k+2)$-submanifolds of $(n+2)$-manifolds (resp.\ $k$-submanifolds of $n$-manifolds).
We say that two partial contractions of $\widetilde{T}_1 \otimes \widetilde{T}_2$ and $T_1 \otimes T_2$ are the \defn{same} if they are obtained by contracting the same pairs of indices and listing free indices in the same order;
e.g.\ $\widetilde{S}_{AD'EB'}\widetilde{T}^{D'}{}_C{}^E$ and $S_{\alpha\delta'\epsilon\beta'}T^{\delta'}{}_\gamma{}^\epsilon$ are the same partial contraction.

Let $S$ be a natural submanifold tensor of bi-rank $(r,s)$ and homogeneity $w$ on $k$-submanifolds of $n$-manifolds.
The \defn{tensor weight} of $S$ is $w - r - s$.
This invariant has three fundamental properties.
First, the tensor weight equals the homogeneity on scalars.
Second, the tensor weight is additive with respect to tensor products:
if $S$ has tensor weight $w_1$ and if $T$ has tensor weight $w_2$, then $S \otimes T$ has tensor weight $w_1 + w_2$.
Third, the tensor weight is unchanged by contraction;
e.g.\ if $S_{\alpha\beta\gamma'}$ has tensor weight $w$, then so does $S_\alpha{}^\alpha{}_{\alpha'}$.

Together these properties allow us to consider partial contraction of tensor products of straight submanifold tensors.

\begin{lemma}
    \label{lem:contraction-straight}
    Let $\widetilde{T}_i$, $i \in \{ 1, 2 \}$, be straight submanifold tensors of tensor weight $w_i$ on $(k+2)$-submanifolds of $(n+2)$-manifolds.
    Then any partial contraction $\widetilde{U}$ of $\widetilde{T}_1 \otimes \widetilde{T}_2$ is a straight submanifold tensor of tensor weight $w_1+w_2$.
    Moreover, if $T_i$, $i \in \{ 1 , 2 \}$, are straightenable tensor invariants associated to $\widetilde{T}_i$, then the same partial contraction of $T_1 \otimes T_2$ is a straightenable tensor invariant associated to $\widetilde{U}$.
\end{lemma}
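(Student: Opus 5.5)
The plan is to reduce everything to the canonical extrinsic ambient space of Lemma~\ref{lem:minimal-in-einstein-ambient-space} and compare components directly. Let $\cjmath \colon \cmS \to (\cmG,\cg)$ be the canonical extrinsic ambient space of a minimal submanifold $j \colon \Sigma^k \to (M^n,g)$ of an Einstein manifold. By hypothesis, $\widetilde{T}_i^{\cjmath,\cg} = \tau^{w_i'}\varpi^\ast T_i^{j,g}$, where $w_i'$ is the homogeneity of $\widetilde{T}_i$. The tensor weight is then $w_i = w_i' - r_i - s_i$. Tensor products are immediate:
\begin{equation*}
    (\widetilde{T}_1 \otimes \widetilde{T}_2)^{\cjmath,\cg} = \tau^{w_1'+w_2'}\varpi^\ast(T_1 \otimes T_2)^{j,g} ,
\end{equation*}
because $\varpi^\ast = (\varpi,\varpi)^\ast$ acts factor-wise and is therefore compatible with tensor products.

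The substantive step is contraction. Take a contraction of one tangential index pair or one normal index pair. I will compute it in the coordinates $(t,x^\alpha,\rho)$ on $\cmS$ and Fermi coordinates on $\cmG$, as in the proof of Lemma~\ref{lem:sff-riem-straight}.

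\emph{Tangential pairs.} By Equation~\eqref{eqn:pullback-metric-is-straightenable},
\begin{equation*}
    \cjmath^\ast\cg = 2\rho\,dt^2 + 2t\,dt\,d\rho + \tau^2\varpi^\ast h ,
\end{equation*}
so the inverse metric splits into a block in $(t,\rho)$ and the block $\tau^{-2}h^{\alpha\beta}$. A tensor of the form $\tau^{w}\varpi^\ast T$ has components only in the $dx^\alpha$ directions. Hence only the $\tau^{-2}h^{\alpha\beta}$ block contributes to a contraction of two tangential indices.

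\emph{Normal pairs.} Since $N\partial_{\alpha'} = \partial_{\alpha'}$ and $\cg_{\alpha'\beta'} = \tau^2 g_{\alpha'\beta'}$, the induced inverse metric on $N^\ast\cmS$ is $\tau^{-2}\varpi^\ast g^{\alpha'\beta'}$. A normal contraction therefore contributes the same factor $\tau^{-2}$.

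In either case the contracted tensor equals $\tau^{w_1'+w_2'-2}\varpi^\ast U^{j,g}$, where $U$ is the same partial contraction of $T_1 \otimes T_2$. Each contraction lowers the homogeneity by $2$ and the total rank by $2$, so the tensor weight is unchanged and equals $w_1 + w_2$. Iterating over all contracted pairs gives the claim, with the homogeneity of $\widetilde{U}$ matching that of $U$ as required by Definition~\ref{defn:stragT}.

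Finally, $U$ is a natural submanifold tensor on $k$-submanifolds of $n$-manifolds, since naturality is preserved by tensor products and partial contractions.

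The only point needing care, and the expected main obstacle, is the identification $\varpi^\ast(T\Sigma\text{-components}) \subset T^\ast\cmS$ restricted to $dx^\alpha$ directions. I need the bundle pullback $(\varpi,\varpi)^\ast$ to carry $T^\ast\Sigma$ into $\operatorname{span}\{dx^\alpha\}$ and $N^\ast\Sigma$ into $N^\ast\cmS$. This holds because $\cjmath = \mathrm{id} \times j \times \mathrm{id}$ and the metric is block-diagonal in the $M$ directions, and I would verify it directly from the product structure.
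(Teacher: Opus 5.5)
Your proposal is correct and follows essentially the same route as the paper. Tensor products are immediate from the definition. Contractions pick up exactly a factor $\tau^{-2}$, because the canonical extrinsic ambient space has tangential and normal inverse metrics $\tau^{-2}g^{\alpha\beta}$ and $\tau^{-2}g^{\alpha'\beta'}$, and the tensor weight is unchanged by contraction. Your additional check that $(\varpi,\varpi)^\ast$ carries $T^\ast\Sigma$ and $N^\ast\Sigma$ into $T^\ast\cmS$ and $N^\ast\cmS$, using the block structure of $\cjmath^\ast\cg$, spells out a detail the paper leaves implicit.
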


\begin{proof}
    It follows immediately from Definition~\ref{defn:stragT} that $\widetilde{T}_1 \otimes \widetilde{T}_2$ is a straight submanifold tensor of tensor weight $w_1+w_2$ and that $T_1 \otimes T_2$ is a straightenable submanifold tensor associated to $\widetilde{T}_1 \otimes \widetilde{T}_2$.

    Let $\cjmath \colon \cmS \to (\cmG,\cg)$ be the canonical extrinsic ambient space of a minimal submanifold $j \colon \Sigma^k \to (M^n,g)$ of an Einstein manifold.
    Lemma~\ref{lem:minimal-in-einstein-ambient-space} implies that $\widetilde{g}^{\alpha\beta} = \tau^{-2}g^{\alpha\beta}$ and $\widetilde{g}^{\alpha'\beta'} = \tau^{-2}g^{\alpha'\beta'}$.
    It follows that any partial contraction $\widetilde{U}$ of $\widetilde{T}_1 \otimes \widetilde{T}_2$ is straight, and that the same partial contraction $T_1 \otimes T_2$ is straightenable and associated to $\widetilde{U}$.
    The final conclusion follows from the fact that the tensor weight is unchanged by contraction.
\end{proof}

Our second construction, which applies only to scalars, is via the ambient Laplacian.
Note that the associated straightenable invariants can be chosen to be extrinsic scalar invariants in this construction.

\begin{proposition} 
    \label{prop:straightIell}
    Let $\widetilde{I}$ be a straight submanifold scalar of homogeneity $w$ on $(k+2)$-submanifolds of $(n+2)$-manifolds.
    Let $\ell \in \mathbb{N}$.
    Then $\cDelta^\ell\widetilde{I}$ is a straight submanifold scalar of homogeneity $w-2\ell$.
    Additionally, if $I$ is a straightenable natural submanifold scalar associated to $\widetilde{I}$, then
    \begin{equation}
        \label{I_ell-definition}
        I_\ell := \left( \prod_{s=0}^{\ell-1} \left( \overline{\Delta} + \frac{2(2s-w)(k+w-2s-1)}{k}\mathcal{P}_{\alpha}{}^{\alpha} \right) \right) I
    \end{equation}
    is a straightenable natural submanifold scalar associated to $\cDelta^\ell\widetilde{I}$.
    Moreover, if $I$ is an extrinsic scalar invariant, then $I_\ell$ is an extrinsic scalar invariant.
\end{proposition}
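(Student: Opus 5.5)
The argument is a direct computation in the canonical extrinsic ambient space, followed by induction on $\ell$. The main tool is the intertwining identity
\begin{equation*}
    \cDelta^{\cjmath^\ast\cg}\bigl( \tau^w\varpi^\ast u \bigr) = \tau^{w-2}\varpi^\ast\bigl( \overline{\Delta} - w(k+w-1)\lambda \bigr)u
\end{equation*}
for $u \in C^\infty(\Sigma)$. Here $\cDelta$ is the Laplacian of the induced metric $\cjmath^\ast\cg$ on $\cmS$, so that $\cDelta\cI$ is a natural submanifold scalar.

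First I verify this identity. By \eqref{eqn:pullback-metric-is-straightenable}, the induced metric $\cjmath^\ast\cg = 2\rho\,dt^2 + 2t\,dt\,d\rho + \tau^2\varpi^\ast h$ is exactly the canonical Fefferman--Graham ambient metric of the Einstein manifold $(\Sigma,h)$. The only difference is that $h$ is not Einstein; the formula for $\tau$ nevertheless uses the constant $\lambda$ coming from $M$. The Laplacian computation from \cite{CKLTY2024} uses only the warped-product form of this metric, not the Einstein condition on the fiber metric. I would therefore compute directly. The inverse metric has $\cg^{0\infty}=t^{-1}$, $\cg^{00}=-2\rho t^{-2}$, and fiber part $\tau^{-2}h^{-1}$. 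The volume form is $t\,\tau^k\,dV_h$ up to a constant. Applying these to $\tau^w\varpi^\ast u$ gives the displayed formula after routine algebra, using $\partial_\rho\tau = t\lambda/2$, $\partial_t\tau=\tau/t$, and $\tau^2=t^2(1+\lambda\rho/2)^2$.

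Next comes the induction. Write $\cT=\cDelta^{s}\cI$, and suppose it equals $\tau^{w-2s}\varpi^\ast I_s$ on the canonical space. Applying the identity shows that $\cDelta^{s+1}\cI = \tau^{w-2s-2}\varpi^\ast\bigl(\overline{\Delta} - (w-2s)(k+w-2s-1)\lambda\bigr)I_s$. It remains to express $\lambda$ by a natural extrinsic scalar on $\Sigma$.

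For this I would use the extrinsic Schouten tensor \eqref{eqn:defn-mP}. Since $j$ is minimal, $H=0$ and $\mathcal{P}_{\alpha\beta}=P_{\alpha\beta}$. Since $\Ric=(n-1)\lambda g$, we have $P=\tfrac{\lambda}{2}g$. Hence $\mathcal{P}_\alpha{}^\alpha = k\lambda/2$, that is, $\lambda = \tfrac{2}{k}\mathcal{P}_\alpha{}^\alpha$. Substituting gives exactly the factor $\overline{\Delta}+\tfrac{2(2s-w)(k+w-2s-1)}{k}\mathcal{P}_\alpha{}^\alpha$, so $I_{s+1}$ is given by \eqref{I_ell-definition} and is a natural submanifold scalar. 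The operators commute only formally; the induction applies them in the order $s=0,\dots,\ell-1$, with the rightmost factor acting first, which matches the product. This also shows that $\cDelta^\ell\cI$ is straight of homogeneity $w-2\ell$.

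Finally, suppose $I$ is an extrinsic scalar invariant. Then $\overline{\Delta}$ depends only on $h=j^\ast g$, and $\mathcal{P}_\alpha{}^\alpha$ is extrinsic by \cite{CGKTW24}; indeed $\overline{P}=\mathcal{P}-F$ with $F$ conformally invariant. So each factor preserves the class of extrinsic scalar invariants, and hence $I_\ell$ is one as well.

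The main obstacle is the Laplacian computation. One must check that the non-Einstein fiber metric $h$ does not introduce extra terms; I expect it does not, since $\tau$ depends only on $(t,\rho)$.
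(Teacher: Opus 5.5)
Your proposal is correct and follows the paper's proof: the intertwining identity $\cDelta(\tau^w\varpi^\ast u)=\tau^{w-2}\varpi^\ast\bigl(\overline{\Delta}-w(k+w-1)\lambda\bigr)u$ on the canonical extrinsic ambient space, iteration with $w$ replaced by $w-2s$, the substitution $\lambda=\tfrac{2}{k}\mathcal{P}_\alpha{}^\alpha$, and the extrinsic invariance of $\mathcal{P}_{\alpha\beta}$. The paper cites \cite{CLY23} for the identity, whereas you verify it directly, and you rightly note that only the warped-product form \eqref{eqn:pullback-metric-is-straightenable} matters, not whether $h$ is Einstein. One small slip to fix: the inverse of the $(t,\rho)$ block $2\rho\,dt^2+2t\,dt\,d\rho$ has $\cg^{00}=0$ and $\cg^{\infty\infty}=-2\rho t^{-2}$ (not $\cg^{00}=-2\rho t^{-2}$), and with these components your computation does produce the stated identity.
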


\begin{proof}
    Let $\widetilde{\jmath} \colon \cmS \to (\cmG,\cg)$ be the canonical extrinsic ambient space of a minimal submanifold $j \colon \Sigma^k \to (M^n,g)$ of an Einstein manifold with $\Ric = (n-1)\lambda g$.
    Equation~\eqref{eqn:pullback-metric-is-straightenable} implies that if $u \in C^\infty(\Sigma)$ and $w \in \bR$, then
    \begin{equation*}
        \cDelta(\tau^w\varpi^\ast u) = \tau^{w-2}\varpi^\ast\bigl( (\overline{\Delta} - w(k+w-1)\lambda)u \bigr)
    \end{equation*}
    (cf.\ \cite{CLY23}*{Lemma~5.1}).
    It immediately follows that $\cDelta^\ell\cI$ is straight and
    \begin{equation}
        \label{eqn:I_ell-definition-with-lambda}
        \cDelta^\ell\widetilde{I} = \tau^{w-2\ell}\varpi^\ast\prod_{s=0}^{\ell-1} \left( \overline{\Delta} + (2s-w)(k+w-2s-1)\lambda \right) I .
    \end{equation}
    Finally, Equation~\eqref{eqn:defn-mP} implies that $\mathcal{P}_\alpha{}^\alpha = k\lambda/2$.
    This yields Equation~\eqref{I_ell-definition}.
    The final conclusion follows from the fact that $\mathcal{P}_{\alpha\beta}$ is an extrinsic tensor invariant.
\end{proof}

The above constructions produce straight submanifold scalars of high order in the metric that are easily computed modulo natural divergences.

Another key point of straight invariants is that they give rise to easily computable conformal submanifold scalars:

\begin{lemma}
    \label{lem:straight-is-almost-conformal}
    Let $\widetilde{I}$ be a straight submanifold scalar of homogeneity $w \geq -k$ on $(k+2)$-submanifolds of $(n+2)$-manifolds.
    Set $\mathcal{I} := \iota^\ast\widetilde{I}$.
    If $I$ is a straightenable submanifold scalar associated to $\widetilde{I}$ and if $j \colon \Sigma^k \to (M^n,g)$ is a minimal submanifold of an Einstein manifold, then
    \begin{equation*}
        I^{j,g} = \mathcal{I}^{j^\ast g} .
    \end{equation*}
\end{lemma}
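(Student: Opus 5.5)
The plan is to evaluate $\mathcal{I}$ using the canonical extrinsic ambient space, which Theorem~\ref{thm:construction-of-scalars} permits. Since $w \geq -k$, that theorem says $\mathcal{I} = \iota^\ast\cI^{\cjmath,\cg}$ does not depend on which extrinsic ambient space we choose. It also says $\mathcal{I}^h = h^\ast\mathcal{I}$ is a well-defined conformal submanifold scalar. We may therefore work with the canonical extrinsic ambient space $\cjmath \colon \cmS \to (\cmG,\cg)$ of Lemma~\ref{lem:minimal-in-einstein-ambient-space}, built from the minimal submanifold $j \colon \Sigma^k \to (M^n,g)$ of the Einstein manifold $(M,g)$. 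By that lemma, this is a genuine extrinsic ambient space for $j \colon \Sigma \to (M,[g])$.

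Next we apply the definition of straightness (Definition~\ref{defn:stragT}). On the canonical space,
\begin{equation*}
    \cI^{\cjmath,\cg} = \tau^w\varpi^\ast I^{j,g}, \qquad \tau = t(1+\lambda\rho/2).
\end{equation*}
We then restrict to $\rho=0$ via $\iota$. Here $\tau$ becomes $t$, so $\iota^\ast\cI = t^w\varpi^\ast I^{j,g}$, viewed as a function on $\mS \cong \bR_+ \times \Sigma$.

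Two identifications still need to be checked. First, the identification $\mS \cong \bR_+\times\Sigma$ is $(x,t^2 h_x)\cong(t,x)$ with $h = j^\ast g$, and $\iota(t,x) = (t,x,0)$ is consistent with this; this is the convention used in the proof of Proposition~\ref{prop:extrinsic-straight-and-normal}. Second, the section $h \colon \Sigma \to \mS$, $x\mapsto (x,h_x)$, corresponds to $t=1$. Pulling back by $h$ then gives
\begin{equation*}
    \mathcal{I}^{j^\ast g} = h^\ast\iota^\ast\cI = I^{j,g}.
\end{equation*}

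The only point needing care is that the canonical space is compatible with the independence statement. In particular, its metric bundle coordinate $t$ must match the one induced by $g$ on $\mS$ through the tautological immersion $\jmath$. This holds because $\cjmath(t,x,\rho) = (t,j(x),\rho)$ and $\jmath(t,x) = (t,j(x))$.
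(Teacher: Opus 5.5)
Your proposal is correct and follows essentially the same argument as the paper. You use Theorem~\ref{thm:construction-of-scalars} to evaluate $\mathcal{I}$ on the canonical extrinsic ambient space, write $\cI = \tau^w\varpi^\ast I^{j,g}$ by straightness, and observe that $\tau\circ\iota\circ h = 1$ and $\varpi\circ\iota\circ h = \Id_\Sigma$. Your extra check that the $t$-coordinate of the canonical space matches the one induced by $g$ on $\mS$ is a detail the paper leaves implicit.
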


\begin{proof}
    Let $\cjmath \colon \cmS \to (\cmG,\cg)$ be the canonical extrinsic ambient space for $j$ and set $h := j^\ast g$.
    Theorem~\ref{thm:construction-of-scalars} implies that $\mathcal{I}$ is well-defined.
    The definition of the canonical extrinsic ambient space yields $\tau \iota h = 1$ and $\varpi \iota h = \Id_\Sigma$.
    We deduce that
    \begin{equation*}
        \mathcal{I}^h = h^\ast \iota^\ast \widetilde{I}\,^{\cjmath,\cg} = h^\ast \iota^\ast \left( \tau^w \varpi^\ast I^{j,g} \right) = I^{j,g} . \qedhere
    \end{equation*}
\end{proof}

Proposition~\ref{prop:straightIell} and Lemma~\ref{lem:straight-is-almost-conformal} give an effective way to compute a large class of conformal submanifold scalars:

\begin{corollary}
    \label{cor:straight-mod-divergence}
    Let $\widetilde{I}$ be a straight submanifold scalar of homogeneity $w$ on $(k+2)$-submanifolds of $(n+2)$-manifolds.
    Let $\ell \in \mathbb{N}_0$ and suppose that $w - 2\ell \geq -k$.
    If $I$ is a straightenable conformal submanifold scalar associated to $\widetilde{I}$ and if $j \colon \Sigma^k \to (M^n,g)$ is a minimal submanifold of an Einstein manifold with $\Ric = (n-1)\lambda g$, then
    \begin{equation*}
        \bigl( \iota^\ast \cDelta^\ell\widetilde{I} \, \bigr)^{j^\ast g} \equiv (2\lambda)^\ell\frac{(-w/2+\ell-1)!(k+w-1)!!}{(-w/2-1)!(k+w-2\ell-1)!!} I^{j,g} 
    \end{equation*}
    modulo natural divergences.
\end{corollary}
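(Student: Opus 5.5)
The plan is to combine Proposition~\ref{prop:straightIell} with Lemma~\ref{lem:straight-is-almost-conformal}, and then discard the Laplacians modulo divergences.

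First, Proposition~\ref{prop:straightIell} shows that $\cDelta^\ell\widetilde{I}$ is straight of homogeneity $w-2\ell$. Equation~\eqref{eqn:I_ell-definition-with-lambda} identifies an associated straightenable scalar: the operator $\prod_{s=0}^{\ell-1}\bigl(\overline{\Delta} + (2s-w)(k+w-2s-1)\lambda\bigr)$ applied to $I$. Since $w-2\ell \geq -k$, Lemma~\ref{lem:straight-is-almost-conformal} applies to $\cDelta^\ell\widetilde{I}$ at the minimal submanifold $j$ of the Einstein manifold. This gives
\begin{equation*}
    \bigl(\iota^\ast\cDelta^\ell\widetilde{I}\,\bigr)^{j^\ast g} = \prod_{s=0}^{\ell-1}\bigl(\overline{\Delta} + (2s-w)(k+w-2s-1)\lambda\bigr) I^{j,g} ,
\end{equation*}
with $\overline{\Delta}$ the Laplacian of $j^\ast g$. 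Strictly, Lemma~\ref{lem:straight-is-almost-conformal} is stated for a straightenable scalar, and the operator above depends on $\lambda$, so it is not itself a natural invariant. To handle this, I would instead use $I_\ell$ from Equation~\eqref{I_ell-definition}, which is natural and agrees with the displayed expression on minimal submanifolds of Einstein manifolds because $\mathcal{P}_\alpha{}^\alpha = k\lambda/2$ there.

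Next, expand the product. Since $\lambda$ is constant, every term containing at least one factor of $\overline{\Delta} = -\onabla^\alpha\onabla_\alpha$ is a divergence of a natural submanifold tensor; in the $\mathcal{P}$-form each such term is $\onabla^\alpha$ of a natural one-form. What survives modulo natural divergences is the constant term $\lambda^\ell\prod_{s=0}^{\ell-1}(2s-w)(k+w-2s-1)\, I^{j,g}$. Here I interpret ``modulo natural divergences'' as allowing $\lambda$ to be written in terms of $\mathcal{P}_\alpha{}^\alpha$, and I would add a short remark to that effect.

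Finally, simplify the constant. For the first factor,
\begin{equation*}
    \prod_{s=0}^{\ell-1}(2s-w) = 2^\ell\prod_{s=0}^{\ell-1}(s-w/2) = 2^\ell\frac{(-w/2+\ell-1)!}{(-w/2-1)!} ,
\end{equation*}
read as a Gamma-function ratio when $-w/2$ is not a nonnegative integer. For the second factor, the terms $k+w-2s-1$ for $s=0,\dots,\ell-1$ step down by $2$, so
\begin{equation*}
    \prod_{s=0}^{\ell-1}(k+w-2s-1) = \frac{(k+w-1)!!}{(k+w-2\ell-1)!!} .
\end{equation*}
This step needs $k+w-2\ell-1 \geq -1$, which is exactly the hypothesis $w-2\ell\geq -k$ and fits the convention $(-1)!!=1$. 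Multiplying the two factors gives the stated coefficient $(2\lambda)^\ell$ times the ratio.

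The only delicate point is the bookkeeping in the first step: making sure Lemma~\ref{lem:straight-is-almost-conformal} is applied to a genuinely natural straightenable scalar (namely $I_\ell$) and not to the $\lambda$-dependent operator. The rest is routine.
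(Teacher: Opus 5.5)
Your proposal is correct and is essentially the paper's proof, which simply cites Equation~\eqref{eqn:I_ell-definition-with-lambda} and Lemma~\ref{lem:straight-is-almost-conformal}: apply the lemma to $\cDelta^\ell\widetilde{I}$ with the natural associated scalar $I_\ell$ from Proposition~\ref{prop:straightIell}, discard every term containing $\overline{\Delta}$ as a divergence (legitimate because $\lambda$ is constant), and simplify the remaining constant exactly as you did. One minor correction to your factorial remark: here $w$ is a nonpositive even integer, so the factorials are ordinary except when $w=0$, where the coefficient must be read as $0$ (consistent with $\cDelta^\ell$ of a constant); the Gamma-ratio reading is needed in that case, not when $-w/2$ fails to be a nonnegative integer.
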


\begin{proof}
    This follows immediately from Equation~\eqref{eqn:I_ell-definition-with-lambda} and Lemma~\ref{lem:straight-is-almost-conformal}.
\end{proof}

We conclude this section by deriving those formulas from the introduction that rely on straight invariants but do not involve renormalization.

First, we systematically compute conformal submanifold scalars at minimal submanifolds of Einstein manifolds:

\begin{proof}[Proof of Theorem~\ref{thm:compute-straight}]
    Lemmas~\ref{lem:sff-riem-straight} and~\ref{lem:contraction-straight} imply that $\cmP_{a,b}$ is a straight submanifold scalar of homogeneity $w := -a-2b$.
    The conclusion follows from Equation~\eqref{eqn:I_ell-definition-with-lambda} and Lemma~\ref{lem:straight-is-almost-conformal}.
\end{proof}

Second, we derive a Gauss--Bonnet--Chern-type formula for compact minimal submanifolds of Einstein manifolds:

\begin{proof}[Proof of Corollary~\ref{cor:compact-area}]
    Let $j \colon \Sigma^k \to (M^n,g)$, $k$ even, be a minimal submanifold of an Einstein manifold with $\Ric = (n-1)\lambda g$.

    First, we compute the intrinsic Pfaffian $\overline{\Pf}{}^h$ of $h := j^\ast g$.
    The Gauss equation~\eqref{eqn:gauss} yields
    \begin{align*}
        \overline{\Rm} & = j^\ast\Rm + \frac{1}{2}\intl \wedge \intl = \widehat{W} + \frac{\lambda}{2} h \wedge h , \\
        \widehat{W} & := j^\ast W + \frac{1}{2}\intl \wedge \intl .
    \end{align*}
    On the one hand, Lemmas~\ref{lem:sff-riem-straight} and~\ref{lem:contraction-straight} imply that $\cjmath^\ast\widetilde{\Rm} + \frac{1}{2}\widetilde{L} \wedge \widetilde{L}$ is a straight submanifold tensor to which $\widehat{W}$ is associated.
    Moreover, Equation~\eqref{eqn:gauss} yields
    \begin{equation*}
        \widetilde{\overline{\Rm}} = \cjmath^\ast\widetilde{\Rm} + \frac{1}{2} \widetilde{L} \wedge \widetilde{L} .
    \end{equation*}
    On the other hand, the Binomial Theorem and Equation~\eqref{eqn:Pf-with-g} yield
    \begin{equation}
        \label{eqn:evaluate-pfaffian-general}
        \begin{split}
            \overline{\Pf}{}^h & = \sum_{r=0}^{k/2} \binom{k/2}{r}\left( \frac{\lambda}{2}\right)^{k/2-r}\Pf_{k/2}\Bigl( \widehat{W}^{\otimes r} \otimes \bigl( h \wedge h \bigr)^{\otimes(k/2-r)} \Bigr) \\
            & = \sum_{r=0}^{k/2} (k-2r-1)!!\lambda^{k/2-r}\Pf_r\bigl( \widehat{W} \bigr) .
        \end{split}
    \end{equation}
    Set $\widetilde{\mathcal{P}}_r := \Pf_r( \widetilde{\overline{\Rm}} )$.
    Lemma~\ref{lem:contraction-straight} implies that $\widetilde{\mathcal{P}}_r$ is a straight invariant to which $\Pf_r(\widehat{W})$ is associated.
    Lemma~\ref{lem:straight-is-almost-conformal} then yields
    \begin{equation*}
        \bigl( \iota^\ast \widetilde{\mathcal{P}}_{r} \bigr)^h = \Pf_{r}( \widehat{W} ) ,
    \end{equation*}
    while Corollary~\ref{cor:straight-mod-divergence} yields
    \begin{equation*}
        \bigl( \iota^\ast \cDelta^{k/2-r}\widetilde{\mathcal{P}}_{r} \bigr)^h \equiv (2\lambda)^{k/2-r}\frac{(k/2-1)!(k-2r-1)!!}{(r-1)!} \bigl( \iota^\ast \widetilde{\mathcal{P}}_{r} \bigr)^h
    \end{equation*}
    modulo natural divergences.
    Combining these with Equation~\eqref{eqn:evaluate-pfaffian-general} yields
    \begin{equation}
        \label{eqn:evaluate-pfaffian-mod-div-general}
        \overline{\Pf}{}^h \equiv (k-1)!!\lambda^{k/2} + \sum_{r=1}^{k/2} 2^{r-k/2}\frac{(r-1)!}{(k/2-1)!}\iota^\ast\bigl( \widetilde{\Delta}^{k/2-r} \widetilde{\mathcal{P}}_r \bigr)
    \end{equation}
    modulo natural divergences.
    Integrating this over a compact manifold via the Divergence Theorem yields the final conclusion.
\end{proof}

\section{Renormalized extrinsic curvature integrals}
\label{sec:albin}

In this section we generalize results of Albin~\cite{Albin2009} to the setting of conformally compact minimal submanifolds of conformally compact Einstein manifolds.
Indeed, as in Albin's work, the results of this section depend only on the formal asymptotics of such spaces below the order of the respective nonlocal terms.
Since geodesic defining functions do not pull back to geodesic defining functions on submanifolds, we renormalize using the larger class of even defining functions.
Our approach is inspired by that of Graham and his coauthors~\cites{EptaminitakisGraham2021,GrahamReichert2020,GrahamWitten1999,Graham2000}, though the focus on even defining functions is new.

\subsection{Even asymptotically hyperbolic manifolds}

We begin by defining even asymptotically hyperbolic manifolds and computing the asymptotic expansions of natural Riemannian tensors thereon.
Our presentation mostly follows Albin~\cite{Albin2009}, though we compute with $\mathbb{Z}_2$-gradings on covariant tensors, rather than just functions, and exclusively employ Hadamard regularization.

A \defn{collar neighborhood} for a manifold-with-boundary $\overline{M}$ is a diffeomorphism $F \colon [0,\varepsilon_0) \times \partial\overline{M} \to U$ onto a neighborhood $U \subset \overline{M}$ of $\partial\overline{M}$ with the property that $F(0,\cdot)$ is the inclusion map.
We say that $\overline{M}$ is \defn{collared} if it is has been equipped with a fixed collar neighborhood, and in this case we always denote by $\rho$ the coordinate on the $[0,\varepsilon_0)$ factor.

Let $\overline{M}{}^n$ be a collared manifold-with-boundary.
A section $T$ of a vector bundle $E \to \overline{M}$ is \defn{polyhomogeneous} if its restriction to the interior $M$ of $\overline{M}$ is smooth and there are a strictly increasing sequence $(m_j)_{j=0}^\infty$ of integers and a double sequence $(T_{(i,j)})_{j \geq 0 , i \geq m_j}$ of smooth sections of the pullback bundle $E\rvert_{\partial\overline{M}} \to \partial\overline{M}$ such that $T$ has an asymptotic expansion
\begin{equation}
    \label{eqn:Cphg}
    T = \sum_{j = 0}^\infty\sum_{i \geq m_j} T_{(i,j)}\rho^i(\log\rho)^j
\end{equation}
near $\{ \rho = 0 \}$.
We say that $T$ is of \defn{class $\Cphg^m$} if its asymptotic expansion~\eqref{eqn:Cphg} is valid with $m_0=0$ and $m_1=m$.
Such $T$ has an asymptotic expansion
\begin{equation}
    \label{eqn:phg-expansion-capped}
    T = T_{(0,0)} + \dotsm + T_{(m-1,0)}\rho^{m-1} + T_{(m,1)}\rho^m\log\rho + T_{(m,0)}\rho^m + o(\rho^m) .
\end{equation}
We say that $T$ is of \defn{class $C^m$} if additionally $T_{(m,1)}=0$.
Since manifolds-with-boundary and collar neighborhoods are smooth, the classes of $\Cphg^{m}$ and $C^m$ sections are independent of the choice of collar neighborhood;
see Grieser's lecture notes~\cite{Grieser2001} for additional details.

Our results for conformally compact Einstein $n$-manifolds (resp.\ conformally compact minimal $k$-submanifolds in conformally compact Einstein $n$-manifolds) only require the validity of the expansion~\eqref{eqn:phg-expansion-capped} with $m=n-1$ (resp.\ $m=k+1$), but we find it convenient to work in the class of polyhomogeneous sections.
This is no restriction for conformally compact Einstein manifolds~\cite{ChruscielDelayLeeSkinner2005} or for conformally compact, graphical, minimal hypersurfaces~\cite{MarxKuo2025}.

A choice of collar neighborhood determines an even structure near the boundary~\cite{EptaminitakisGraham2021}.
We exploit this by introducing $\mathbb{Z}_2$-gradings\footnote{
    An algebra $A$ is $\mathbb{Z}_2$-graded if it decomposes $A = A(1) \oplus A(-1)$ as vector spaces and $A\bigl((-1)^s\bigr)A\bigl((-1)^t\bigr) \subseteq A\bigl((-1)^{s+t}\bigr)$ for all $s,t \in \mathbb{Z}_2$.
    A linear operator $D$ on $A$ has degree $k$ if $DA\bigl((-1)^s\bigr) \subseteq A\bigl((-1)^{s+k}\bigr)$ for all $s \in \mathbb{Z}_2$.
}
on the spaces of polyhomogeneous covariant tensors on a collared manifold-with-boundary.

Denote by $\mFphg(1)$ the vector space of polyhomogeneous functions $f$ of class $\Cphg^{n-1}$ on a collared manifold-with-boundary $\overline{M}{}^n$ such that $f_{(i,0)}=0$ in Equation~\eqref{eqn:Cphg} whenever $i \leq n-2$ is odd.
Denote by $\mF(1) \subset \mFphg(1)$ the subspace whose elements $f$ also satisfy $f_{(n-1,1)}=0$ and, if $n$ is even, $f_{(n-1,0)}=0$.
Denote by $\mFphg(-1)$, or equivalently $\mF(-1)$, the vector space of polyhomogeneous functions $f$ of class $C^{n-1}$ such that $f_{(i,0)}=0$ whenever $i \leq n-1$ is even.
Denote
\begin{equation*}
    \Cphg^{n-1}(\overline{M}) := \mFphg := \mFphg(1) + \mFphg(-1)
\end{equation*}
and observe that
\begin{align*}
    \mFphg\bigl((-1)^s\bigr)\mFphg\bigl((-1)^t\bigr) & \subseteq \mFphg\bigl((-1)^{s+t}\bigr) , \\
    \mF\bigl((-1)^s\bigr)\mF\bigl((-1)^t\bigr) & \subseteq \mF\bigl((-1)^{s+t}\bigr) ,
\end{align*}
for all $s,t \in \mathbb{Z}$.
Moreover, since $\mF(1) \cap \mF(-1) = \{ 0 \}$, we see that $\mF(\pm1)$ gives $\mF := \mF(1) \oplus \mF(-1)$ the structure of a $\mathbb{Z}_2$-graded algebra.
We call $\mF(1)$ (resp.\ $\mF(-1)$) the set of \defn{even} (resp.\ \defn{odd}) functions.
We say that a (different) collar neighborhood $F'$ for $\overline{M}$ is \defn{even} if whenever $(x^i)_{i=1}^{n-1}$ are local coordinates on $\partial\overline{M}$, the functions $\rho \circ (F')^{-1}$ and $x^i \circ (F')^{-1}$ are odd and even, respectively.
This defines an equivalence class of collared neighborhoods on $\overline{M}$, and the spaces $\mFphg(\pm1)$ and $\mF(\pm1)$ depend only on this equivalence class.

Denote by $\mFphg^1(\pm 1)$ the vector space of polyhomogeneous one-forms $\omega$ on $\overline{M}$ with the property that if $(x^i)_{i=1}^{n-1}$ are local coordinates on $\partial\overline{M}$, then
\begin{equation*}
    \omega = \rho^{-1} \bigl( \omega_0 \, d\rho + \omega_i\,dx^i \bigr)
\end{equation*}
for local functions $\omega_0 \in \mFphg(\pm1)$ and $\omega_i \in \mFphg(\mp1)$, $i \in \{ 1, \dotsc, n-1 \}$.
Informally, the parity is determined by the requirement that $\rho^{-1}\,d\rho$ be even and $\rho^{-1}\,dx^i$ be odd.
The choice to divide by $\rho$ is consistent both with the asymptotic behavior of $g_+$ and, by duality, with the use of the vector fields $\rho\partial_\rho$ and $\rho\partial_{x^i}$ for analysis on conformally compact manifolds (cf.\ \cite{MazzeoMelrose1987}).
The spaces $\mF^1(\pm 1)$ are defined similarly.
Denote $\mFphg^1 := \mFphg^1(1) + \mFphg^1(-1)$.
The space $\mF^1 := \mF^1(1) \oplus \mF^1(-1)$ of one-forms of class $\rho^{-1}C^{n-1}$ has the structure of a $\mathbb{Z}_2$-graded module over $\mF$.
Indeed,
\begin{align*}
    \mFphg\bigl((-1)^s\bigr)\mFphg^1\bigl((-1)^t\bigr) & \subseteq \mFphg^1\bigl((-1)^{s+t}\bigr) , \\
    \mF\bigl((-1)^s\bigr)\mF^1\bigl((-1)^t\bigr) & \subseteq \mF^1\bigl((-1)^{s+t}\bigr) .
\end{align*}

Similarly, we denote by $\mFphg^k(\pm1)$ (resp.\ $\mF^k(\pm1)$) the vector spaces of polyhomogeneous sections of $\otimes^kT^\ast M$ that, near $\partial\overline{M}$, can be expressed as linear combinations of tensor products of the even one-form $\rho^{-1}d\rho$ and the odd one-form $\rho^{-1}dx^i$, with coefficients in $\mFphg(\pm1)$ (resp.\ $\mF(\pm1)$) and $\mFphg(\mp1)$ (resp.\ $\mF(\mp1)$), respectively.
We set $\mFphg^k := \mFphg^k(1) + \mFphg^k(-1)$ and observe that $\mF^k := \mF^k(1) \oplus \mF^k(-1)$ is a graded $\mathbb{Z}_2$-module over $\mF$.
If $T \in \mFphg^k$ (resp.\ $T \in \mF^k$), then $\rho^kT$ extends to a section of class $\Cphg^{n-1}$ (resp.\ of class $C^{n-1}$) of $\otimes^kT^\ast\overline{M}$.

A \defn{conformally compact manifold} is a complete pseudo-Riemannian manifold $(M^n,g_+)$ together
with a compact collared manifold-with-boundary $\overline{M}$ such that $M$ is the interior of $\overline{M}$ and $g_+ \in \mFphg^2$.
Note that the \defn{conformal compactification} $\bigl(\overline{M},[\rho^2g_+]\bigr)$ and the \defn{conformal infinity} $(\partial_\infty M,\kc) := (\partial\overline{M},[\rho^2g_+\rvert_{\partial_\infty M}])$ are independent of the choice of collar neighborhood.
We emphasize that $\kc$ is smooth, but that $[\rho^2g_+]$, as a conformal class on $\overline{M}$, need not be smooth.

An \defn{asymptotically hyperbolic manifold} is a conformally compact manifold $(M^n,g_+)$ such that $\lvert d\rho \rvert_{\rho^2g_+} = 1$ along $\partial_\infty M$.
This is independent of the choice of collar neighborhood.
This terminology is explained by the conformal transformation law for the Riemann curvature tensor, which implies~\cite{Mazzeo1986}*{Proposition~1.10} that
\begin{equation*}
    \Rm^{g_+} = -\lvert d\rho \rvert_{\rho^2g_+}^2 g_+ \wedge g_+ + O(\rho^{-3}) .
\end{equation*}
We say that $(M^n,g_+)$ is \defn{even} if whenever $(x^i)_{i=1}^{n-1}$ are local coordinates on $\partial_\infty M$, it holds that
\begin{equation}
    \label{eqn:ah-metric-components}
    \rho^2g_+ = g_{00} \, d\rho^2 + 2g_{0i} \, d\rho \, dx^i + g_{ij} \, dx^i \, dx^j
\end{equation}
for local functions $g_{00} \in \mF(1)$, $g_{0i} \in \mF(-1)$, $g_{ij} \in \mFphg(1)$, and, moreover, 
\begin{align*}
    (g^{ij})^{(0,0)}(g_{ij})_{(n-1,1)} & = 0 , \\
    (g^{ij})^{(0,0)}(g_{ij})_{(n-1,0)} & = 0 , && \text{if $n$ is even} ,
\end{align*}
where $(g^{ij})^{(0,0)}$ are the components of the inverse of $(g_{ij})_{(0,0)}$.
Chru\'sciel, Delay, Lee, and Skinner~\cite{ChruscielDelayLeeSkinner2005}*{Theorem~A} and Fefferman and Graham~\cite{FeffermanGraham2012}*{Chapter~4} showed that conformally compact Einstein manifolds are even asymptotically hyperbolic manifolds.
We say that $(M^n,g_+)$ is \defn{strongly even} if $g_+ \in \mF^2(1)$.

Let $(M^n,g_+)$ be an even asymptotically hyperbolic manifold.
An \defn{even defining function} for $\partial_\infty M$ is a polyhomogeneous function $r$ such that $r/\rho \in \mF(1)$ and $r/\rho \rvert_{\partial_\infty M}$ is positive.
\defn{Geodesic defining functions}, which are nonnegative functions on $\overline{M}$ such that $\partial\overline{M} = \{ r = 0 \}$ and $\lvert dr \rvert_{r^2g_+} = 1$ in a neighborhood of $\partial_\infty M$, are even (cf.\ \cite{Guillarmou2005}*{Lemma~2.1}).

There are two key points to the definitions above.
First, as we will see in the remainder of this section, they are sufficiently general to apply to conformally compact Einstein manifolds and to conformally compact minimal submanifolds therein.
Second, we have a general renormalization result that recovers properties known for renormalized volumes~\cites{Graham2000,GoverWaldron2017} and renormalized curvature integrals~\cites{Albin2009,CKLTY2024}.
Our proof draws heavily from Graham's study~\cite{Graham2000} of the renormalized volume.

\begin{proposition}
    \label{prop:fundamental-renormalization}
    Let $(M^n,g_+)$ be an even asymptotically hyperbolic manifold and let $f \in \mF(1)$.
    Let $r$ be an even defining function for $\partial_\infty M$.
    Then there is an asymptotic expansion
    \begin{equation}
        \label{eqn:integral-expansions}
        \begin{aligned}
        \int_{\{ r > \varepsilon \}} f\dvol_{g_+} & = \sum_{i = 0 }^{(n-2)/2} \varphi_{(2i)}\varepsilon^{2i+1-n} + \mathscr{V}_f + o(1) , && \text{if $n$ is even} , \\
        \int_{\{ r > \varepsilon \}} f \dvol_{g_+} & = \sum_{i = 0}^{(n-3)/2} \varphi_{(2i)}\varepsilon^{2i+1-n} + \mathscr{L}_f \log\varepsilon + \mathscr{V}_f + o(1) , && \text{if $n$ is odd} ,
        \end{aligned}
    \end{equation}
    as $\varepsilon \to 0$.
    Moreover,
    \begin{enumerate}
        \item if $n$ is even, then $\mathscr{V}_f$ is independent of the choice of $r$, and $\mathscr{V}_f = 0$ if $f$ is the $g_+$-divergence of a one-form $\omega \in \mF^1(1)$; and
        \item if $n$ is odd, then $\mathscr{L}_f$ is independent of the choice of $r$, and $\mathscr{L}_f=0$ if $f$ is the $g_+$-divergence of a one-form $\omega \in \mF^1(1)$.
    \end{enumerate}
\end{proposition}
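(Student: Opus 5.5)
The plan follows Graham's treatment of renormalized volume: compute in the collar coordinates $(\rho,x)$, reduce the integral to one-dimensional integrals in $\rho$, and use the parity of $f$, of $g_+$ and of $r$ to see which powers occur.

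\textbf{Step 1: the expansion.} First, $r/\rho\in\mF(1)$ is positive on the boundary. Hence by an implicit-function argument, for small $\varepsilon$ the set $\{r=\varepsilon\}$ is a graph $\rho=\varepsilon\,b(x,\varepsilon)$. The key claim is that $b$ is even in $\varepsilon$ up to order $n-1$, with possibly a $\varepsilon^{n-1}\log\varepsilon$ term. Next, write
\[
\dvol_{g_+}=\rho^{-n}v(\rho,x)\,d\rho\,dx ,
\]
where $v=\sqrt{\det(\rho^2g_+)}$. The evenness hypotheses on $g_{00},g_{0i},g_{ij}$ show that $v\in\mF(1)$. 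The trace conditions on $(g_{ij})_{(n-1,\cdot)}$ are used exactly here, to kill the $\rho^{n-1}\log\rho$ coefficient, and the $\rho^{n-1}$ coefficient when $n$ is even. So $fv\in\mF(1)$.

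Split the integral at a fixed $\rho=\rho_0$ and work on $\{r>\varepsilon,\ \rho<\rho_0\}$. Since the integrand $\rho^{-n}fv$ has a half-line parity structure, integrating in $\rho$ from $\varepsilon b$ to $\rho_0$ produces only the powers $\varepsilon^{2i+1-n}$, together with a $\log\varepsilon$ term when $n$ is odd. That log term comes from the $\rho^{n-1}$ coefficient of $fv$ meeting $\rho^{-n}$. Substituting the even expansion of $b$ preserves the parities: odd powers of $\varepsilon$ multiply odd powers, giving even exponent shifts. One must check that no $\varepsilon^0$-type or log contamination arises from the polyhomogeneous remainder terms of order $\rho^{n-1}\log\rho$. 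These vanish in $\mF(1)$ for the relevant coefficient, which gives \eqref{eqn:integral-expansions}.

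\textbf{Step 2: independence of $r$.} Given two even defining functions, write $\hat r=re^{\omega}$ with $\omega\in\mF(1)$ and interpolate $r_s=re^{s\omega}$. Then
\[
\frac{d}{ds}\int_{\{r_s>\varepsilon\}} f\dvol_{g_+}=-\int_{\{r_s=\varepsilon\}}f\,\omega\,\frac{r_s}{|dr_s|}\,dA,
\]
up to the standard normalization. This boundary integral has an expansion in $\varepsilon$ whose coefficients again come from an $\mF(1)$ density against $\rho^{1-n}$ on the hypersurface. The surface measure on $\{r_s=\varepsilon\}$ scales like $\varepsilon^{1-n}$ times an even series, so it produces only odd-offset powers $\varepsilon^{2i+1-n}$. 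For $n$ even it therefore has no $\varepsilon^0$ term, and $\mathscr{V}_f$ is constant in $s$. For $n$ odd the derivative has no $\log\varepsilon$ term, since only pure powers appear, so $\mathscr{L}_f$ is constant. Alternatively, following Graham, one can integrate directly over the region between $\{r=\varepsilon\}$ and $\{\hat r=\varepsilon\}$. That region is $\{\varepsilon b<\rho<\varepsilon\hat b\}$, and its integral expands in pure powers $\varepsilon^{2i+1-n}$ plus $o(1)$. I will use this second route because it avoids differentiating expansions.

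\textbf{Step 3: divergences.} For $f=\operatorname{div}_{g_+}\omega$, apply the divergence theorem:
\[
\int_{\{r>\varepsilon\}}f\dvol_{g_+}=-\int_{\{r=\varepsilon\}}\omega(\nu)\,dA_{g_+}.
\]
The unit normal is $\nu\sim\rho\,\partial_\rho$ times an even function, and $\omega=\rho^{-1}(\omega_0d\rho+\omega_idx^i)$ with $\omega_0$ even. Hence $\omega(\nu)$ is even and $dA\sim\varepsilon^{1-n}\times$(even). As in Step 2, the result contains only the powers $\varepsilon^{2i+1-n}+o(1)$: no constant term when $n$ is even, and no log term at all.

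\textbf{Main obstacle.} I expect the main obstacle to be the bookkeeping at the critical order $n-1$. This means checking that the log terms permitted in $\mFphg$ and in $g_{ij}$, together with the trace conditions, exactly suffice to keep $v$ (and $b$, $\omega(\nu)$, $dA$) in $\mF(1)$ up to that order.
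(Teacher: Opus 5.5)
Your proposal is correct and follows essentially the same route as the paper: expand $f\dvol_{g_+}$ in $\rho$ (using $\sqrt{\det(\rho^2 g_+)}\in\mF(1)$, which is where the trace conditions enter), write $\{r>\varepsilon\}=\{\rho>\varepsilon b(\varepsilon,\cdot)\}$ and compare directly with $\{\rho>\varepsilon\}$ (your second route with $\hat r=\rho$), and for divergences reduce to $r=\rho$ and apply the divergence theorem on $\{\rho=\varepsilon\}$. Two small corrections: the hypothesis $r/\rho\in\mF(1)$ already excludes any $\varepsilon^{n-1}\log\varepsilon$ term in $b$, which would otherwise shift the $\log\varepsilon$ coefficient. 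Also, the unit normal has $\rho\partial_i$-components with odd coefficients, so $\omega(\nu)$ is even because odd times odd is even, not because $\nu$ is a multiple of $\rho\partial_\rho$.
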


\begin{proof}
    Denote by $F \colon [0,\varepsilon_0) \times \partial_\infty M \to \overline{M}$ the collar neighborhood of $\overline{M}$.
    By shrinking $\varepsilon_0$ if necessary, we may assume that $F^\ast dr$ is nowhere-vanishing.

    Set $h := \rho^2g_+\rv_{T\partial_\infty M}$.
    Our assumptions imply that
    \begin{equation*}
        f \dvol_{g_+} = \sum_{i=0}^{\lfloor (n-1)/2 \rfloor} \phi_{(2i)}\rho^{2i-n} \, d\rho \dvol_h \mathop{+} o(\rho^{-1})
    \end{equation*}
    as $\rho \to 0$, where $\phi_{(2i)} \in C^\infty(\partial_\infty M)$ for $i \leq (n-1)/2$.
    Since $r$ is an even defining function, there is a positive function $b = b(r,x)$ of class $C^{n-1}$ on some product $[0,\delta_0) \times \partial_\infty M$ such that $\rho = br$ and $b(\cdot,x)$ mod $o(r^{n-1})$ has an even expansion.
    Let $\varepsilon > 0$ be sufficiently small.
    Set $\epsilon(x) := \varepsilon b(\varepsilon,x)$, so that $\{ r > \varepsilon \} = \{ \rho > \epsilon \}$.
    Then
    \begin{equation}
        \label{eqn:plug-in-asymptotics}
        \int_{\{ r > \varepsilon \}} f \dvol_{g_+} = \sum_{i=0}^{\lfloor (n-1)/2 \rfloor} \int_{\partial_\infty M} \int_\epsilon^{\varepsilon_0} \phi_{(2i)}(x)\rho^{2i-n} \, d\rho \dvol_h(x) + O(1) .
    \end{equation}
    Integrating Equation~\eqref{eqn:plug-in-asymptotics} in $\rho$ yields the expansion~\eqref{eqn:integral-expansions}.
    Equation~\eqref{eqn:plug-in-asymptotics} and our definition of $\epsilon$ also imply that
    \begin{multline*}
        \int_{\{ r > \varepsilon \}} f \dvol_{g_+} - \int_{\{ \rho > \varepsilon \}} f \dvol_{g_+} = -\int_{\partial_\infty M} \phi_{(n-1)}\log b(\varepsilon,\cdot) \dvol_h \\
        + \sum_{i=0}^{\lfloor (n-2)/2 \rfloor} \frac{\varepsilon^{2i-n+1}}{n-2i-1}\int_{\partial_\infty M} \bigl( b(\varepsilon,\cdot)^{2i-n+1} - 1 \bigr)\phi_{(2i)} \dvol_h \mathop{+} o(1) ,
    \end{multline*}
    where $\phi_{(n-1)}:=0$ if $n$ is even.
    Since $b(0,\cdot)$ is positive, we see that if $n$ is odd, then $\mathscr{L}_f$ is independent of the choice of $r$.
    Since $b$ is even in $\varepsilon$, we see that if $n$ is even, then $\mathscr{V}_f$ is independent of the choice of $r$.

    Finally, suppose that $f = \mathrm{div}^{g_+}\,\omega$ for some $\omega \in \mF(1)$.
    By the above, it suffices to compute the finite (resp.\ logarithmic) term in the expansion~\eqref{eqn:integral-expansions} when $r=\rho$ in the case when $n$ is even (resp.\ $n$ is odd).
    The Divergence Theorem yields
    \begin{equation*}
        \int_{\{ \rho > \varepsilon \}} f \dvol_{g_+} = \int_{\{ \rho = \varepsilon\}} \omega(\mu) \, \mu \lrcorner \dvol_{g_+} ,
    \end{equation*}
    where
    \begin{equation*}
        \mu := \bigl( g_{00} - g_{0i}g_{0j}g^{ij} \bigr)^{-1/2} \bigl( \rho\partial_\rho - \rho g_{0j}g^{ij}\partial_i \bigr)
    \end{equation*}
    is the inward-pointing unit normal with respect to $g_+$ along $\{ \rho = \varepsilon \}$ and $g^{ij}$ denotes the one-parameter family of inverses of $g_{ij}$.
    Our assumptions on $(M^n,g_+)$ and $\omega$ imply that $\omega(\mu) = a$ and $\mu \lrcorner \dvol_{g_+} = \varepsilon^{1-n}b \dvol_h$ for functions $a,b \in \mF(1)$.
    The conclusion readily follows.
\end{proof}

Albin~\cite{Albin2009} and Case, Khaitan, et al.~\cite{CKLTY2024} showed that the evaluations of natural Riemannian scalars and one-forms, respectively, are even on even asymptotically hyperbolic manifolds of even dimension.
We rederive their results in general dimensions, as our study of renormalized extrinsic curvature integrals imposes no assumptions on the dimension of the target manifold.
The key fact is that covariant derivatives of the Riemann curvature tensor are even (cf.\ \cite{Albin2009}*{Corollary~3.3}):

\begin{lemma}
    \label{lem:albin-expansion}
    Let $(M^n,g_+)$ be an even asymptotically hyperbolic manifold and let $\ell \geq 0$ be an integer.
    Then $\nabla^\ell\Rm \in \mFphg^{\ell+4}(1)$ for each integer $\ell \geq 0$.
    Moreover, $\bigl( \Rm^{g_+} \mathop{+} \frac{1}{2}g_+ \wedge g_+\bigr)_{(0,0)} = 0$ and
    \begin{enumerate}
        \item if $n$ is even, then the components of $(\nabla^\ell\Rm)_{(n-1,0)}^{g_+}$ are linear combinations of partial contractions of the tensors $\mathcal{K} \otimes h^{\otimes s}$, $s \in \mathbb{N}_0$, where $h_{ij} := (g_{ij})_{(0,0)}$ and $\mathcal{K}_{ij} := (g_{ij})_{(n-1,0)}$;
        \item if $n$ is odd, then the components of $(\nabla^\ell\Rm)_{(n-1,1)}^{g_+}$ are linear combinations of partial contractions of the tensors $\mathcal{K} \otimes h^{\otimes s}$, $s \in \mathbb{N}_0$, where $h_{ij} := (g_{ij})_{(0,0)}$ and $\mathcal{K}_{ij} := (g_{ij})_{(n-1,1)}$.
    \end{enumerate}
    In particular, if $n$ is even (resp.\ $n$ is odd), then $(R_{abcd;e_1 \dotsm e_\ell})_{(n-1,0)} = 0$ (resp.\ $(R_{abcd;e_1 \dotsm e_\ell})_{(n-1,1)} = 0$) whenever an odd number of $a,b,c,d,e_1,\dotsc,e_\ell$ is nonzero.
\end{lemma}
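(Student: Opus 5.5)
The plan is to work in local coordinates $(\rho,x^i)$ given by the collar and to expand everything in terms of the frame $\rho^{-1}d\rho,\rho^{-1}dx^i$ and its dual $\rho\partial_\rho,\rho\partial_i$. Write $g_+=\rho^{-2}\bar g$ with $\bar g$ as in Equation~\eqref{eqn:ah-metric-components}. First I will show that in this frame the components $(g_+)_{AB}$ and the inverse components $g_+^{AB}$ are even in the graded sense: $g_{00}\in\mF(1)$, $g_{0i}\in\mF(-1)$, $g_{ij}\in\mFphg(1)$, and these parities are preserved under inversion. Inversion is a rational operation whose denominators are even with nonzero leading term, so this is routine. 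Next I will compute the connection coefficients of $g_+$ in this frame. For instance $\nabla_{\rho\partial_A}(\rho\partial_B)=\Gamma_{AB}{}^C\rho\partial_C$, where the $\Gamma$'s are built from $\rho\partial_\rho$ and $\rho\partial_i$ applied to frame components, together with the commutators $[\rho\partial_\rho,\rho\partial_i]=\rho\partial_i$. The key observation is that $\rho\partial_\rho$ preserves parity, since it multiplies $\rho^i(\log\rho)^j$ terms by polynomials in $i$ and shifts log powers. Meanwhile $\rho\partial_i$ raises the $\rho$-power by one and so flips parity, exactly matching the odd weight assigned to the index $i$. Hence a Christoffel symbol with an odd number of spatial indices is odd, which means the connection is a degree-zero operator on the graded modules $\mFphg^k(\pm1)$. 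Given this, $\Rm$, expressed via the frame Christoffels and their frame derivatives, lies in $\mFphg^4(1)$, and induction gives $\nabla^\ell\Rm\in\mFphg^{\ell+4}(1)$.

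For the leading term, I will use the conformal change formula $\Rm^{g_+}=\rho^{-2}\bigl(\Rm^{\bar g}-\bar g\wedge(\bar\nabla^2\rho/\rho-\tfrac12|d\rho|^2_{\bar g}\bar g/\rho^2)\bigr)$ as in \cite{Mazzeo1986}. Its order-zero part in the rescaled frame is $-\tfrac12|d\rho|^2_{\bar g}g_+\wedge g_+$, and asymptotic hyperbolicity then gives $(\Rm+\tfrac12g_+\wedge g_+)_{(0,0)}=0$.

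For the statements about the coefficient of order $n-1$, I will track where $\rho^{n-1}$ (resp.\ $\rho^{n-1}\log\rho$) can first appear. By the evenness assumptions it can only come from $(g_{ij})_{(n-1,\cdot)}=\mathcal K_{ij}$, since all other nonzero coefficients below that order occur at even powers with parities fixed. Because such a coefficient can only combine linearly with the order-zero data $h$, and all other lower-order terms either have the wrong parity or are quadratic and hence of order at least $2(n-1)$, the coefficient in $\nabla^\ell\Rm$ is a universal linear expression in $\mathcal K$ contracted with $h$. The frame derivatives $\rho\partial_\rho$ acting on $\rho^{n-1}$ give constants, while the $\rho\partial_i$ derivatives push the term to higher order. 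The "in particular" follows because $\mathcal K$ carries two spatial indices, so each term comes from an even tensor, which forces these components to vanish whenever an odd number of indices is spatial. Here I am reading "nonzero" as spatial.

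The main obstacle I expect is the bookkeeping in the logarithmic case. Since $\rho\partial_\rho(\rho^m\log\rho)$ produces both $\rho^m\log\rho$ and $\rho^m$, I must check that the order-$(n-1)$ non-log coefficient for odd $n$ does not spoil the parity class. This is why only the log coefficient is claimed in case (2), and why the trace conditions on $(g_{ij})_{(n-1,\cdot)}$ in the definition of evenness are needed to kill the trace parts.
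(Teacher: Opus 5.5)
Your treatment of the grading follows the paper's argument: the frame $\rho\partial_\rho,\rho\partial_i$, the Koszul formula, and a connection of degree zero. Getting the $(0,0)$ term from the conformal change formula, instead of from the paper's table of frame Christoffel symbols, is fine. However, the bracket in your display has the wrong sign. It should read $\Rm^{g_+}=\rho^{-2}\bigl(\Rm^{\bar g}+\rho^{-1}\bar g\wedge\bar\nabla^2\rho-\tfrac12\rho^{-2}|d\rho|^2_{\bar g}\,\bar g\wedge\bar g\bigr)$; as you wrote it, the leading term would be $+\tfrac12|d\rho|^2_{\bar g}\,g_+\wedge g_+$.

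For $n$ odd your structural argument is correct, and it is a tidy substitute for the paper's explicit computation of $(\Gamma)_{(n-1,1)}$ and $(R)_{(n-1,1)}$. Logarithms first appear at order $n-1$ and only in $g_{ij}$, because $g_{00}\in\mF(1)$ and $g_{0i}\in\mF(-1)$. So they enter linearly, and the coefficients multiplying $\mathcal K$ are algebraic in $h$ because $\rho\partial_i$ raises the order.

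The gap is the case $n$ even. You rely there on two facts, and both are false:
\begin{enumerate}
\item that all other coefficients below order $n-1$ occur at even powers: odd-graded quantities such as $g_{0i}$, $\rho\partial_i g_{jk}$, and frame Christoffel symbols with an odd number of spatial indices carry the powers $\rho,\rho^3,\dots$;
\item that $\rho\partial_i$ only pushes terms to higher order: applied to the $\rho^{n-2}$ coefficient of an even function, it lands exactly at $\rho^{n-1}$.
\end{enumerate}
So components with an odd number of spatial indices acquire $\rho^{n-1}$ coefficients built from tangential derivatives of lower-order data.

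This cannot be repaired, because item~(1) and the $n$ even half of the ``in particular'' clause are false. Take $n=4$ and $g_+=\rho^{-2}(d\rho^2+h+\rho^2S)$ near the boundary. This is even asymptotically hyperbolic with $\mathcal K=0$. The Codazzi equation for the level sets of $\rho$ in $\bar g=d\rho^2+h+\rho^2S$ gives $R(X_i,X_j,X_k,X_0)=\rho^2\bar R_{ijk0}=\mp\rho^3(\nabla_iS_{jk}-\nabla_jS_{ik})+O(\rho^4)$. This is nonzero, e.g., for $h$ flat and $S=\sin(x^1)\,h$. For a Poincar\'e--Einstein metric, the same computation with $-P$ in place of $S$ gives the Cotton tensor of $h$. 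The paper's proof only carries out the logarithmic case and calls the even case ``similar,'' so it has the same hole.

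What your parity bookkeeping does prove for $n$ even is the statement restricted to components with an even number of spatial indices. There, odd factors occur in pairs and vanish at order $0$, and products of lower-order terms land only at even orders. Hence the $(n-1,0)$ coefficient is linear in the order-$(n-1)$ coefficients of $g_{ij}$ contracted against $h$. This includes $(g_{ij})_{(n-1,1)}$ if the metric has one, since $\rho\partial_\rho(\rho^{n-1}\log\rho)$ contains a $\rho^{n-1}$ term. That restricted version is all Corollary~\ref{cor:parity-of-natural-riemannian-objects} uses, since $\mF(-1)$ permits $\rho^{n-1}$ terms when $n$ is even.
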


\begin{proof}
    Pick local coordinates $(x^i)_{i=1}^{n-1}$ on $\partial_\infty M$ and extend these, via the given collar neighborhood, to local coordinates $(x^a)_{a=0}^{n-1}$ on $\overline{M}$ with $x^0:=\rho$.
    Throughout this proof, indices $i,j,k$ take values in $\{ 1, \dotsc, n-1 \}$, indices $a,b,c$ take values in $\{ 0, \dotsc, n-1 \}$, and indices $s,t \in \mathbb{Z}$ record the $\mathbb{Z}_2$-gradings.
    
    Set $X_0 := \rho\partial_\rho$ and $X_i := \rho\partial_i$.
    Since $(M^n,g_+)$ is even,
    \begin{equation}
        \label{eqn:albin-metric-coefficients}
        \begin{aligned}
            g_{00} & \in \mF(1) , & g_{0i} & \in \mF(-1), & g_{ij} & \in \mFphg(1) .
        \end{aligned}
    \end{equation}
    Moreover,
    \begin{align*}
            X_0\mFphg\bigl((-1)^s\bigr) & \subseteq \mFphg\bigl((-1)^s\bigr) , \\
            X_i\mFphg\bigl((-1)^s\bigr) & \subseteq \mF\bigl((-1)^{s+1}\bigr) ;
    \end{align*}
    the second observation follows from the identity $\mFphg(-1)=\mF(-1)$.
    
    Consider the $\mathbb{Z}_2$-grading on polyhomogeneous vector fields determined by
    \begin{equation}
        \label{eqn:albin-X-preserve-grading}
        \mathcal{X}\bigl((-1)^s\bigr) := \mF\bigl((-1)^s\bigr)X_0 + \mF\bigl((-1)^{s+1}\bigr)X_1 + \dotsm + \mF\bigl((-1)^{s+1}\bigr)X_{n-1} .
    \end{equation}
    Direct computation gives
    \begin{equation}
        \label{eqn:commute-0-frame}
        [ X_0 , X_i ] = X_i ,
    \end{equation}
    and all other inequivalent commutators vanish.
    It follows that $\mathcal{X} := \mathcal{X}(1) \oplus \mathcal{X}(-1)$ is a $\mathbb{Z}_2$-graded Lie algebra;
    i.e.
    \begin{equation}
        \label{eqn:albin-X-graded-lie-algebra}
        \bigl[ \mathcal{X}\bigl((-1)^s\bigr) , \mathcal{X}\bigl((-1)^t\bigr) \bigr] \subseteq \mathcal{X}\bigl((-1)^{s+t}\bigr) .
    \end{equation}
    We use $\{ X_a \}_{a=0}^{n-1}$ to compute components of tensors.
    Thus, a tensor $T$ of rank $\ell$ is in $\mFphg^\ell\bigl((-1)^s\bigr)$ if and only if
    \begin{equation*}
        T_{a_1 \dotsm a_\ell} := T( X_{a_1} , \dotsc , X_{a_\ell} ) \in \mFphg\bigl( (-1)^{s+t} \bigr) , \quad t := \# \bigl\{ i \in \{ 1 , \dotsc , \ell \} \mathrel{}:\mathrel{} a_i \not= 0 \bigr\} .
    \end{equation*}
    We characterize $\mF^\ell\bigl((-1)^s\bigr)$ similarly.

    Define connection coefficients $\Gamma_{ab}^c$ by
    \begin{equation}
        \label{eqn:defn-0-christoffel}
        \nabla^{g_+}_{X_a}X_b := \Gamma_{ab}^cX_c .
    \end{equation}
    Equation~\eqref{eqn:commute-0-frame} implies that $2\Gamma_{[ab]}^c = 2\delta_{[a}^0\delta_{b]}^c$.
    Combining Equations~\eqref{eqn:albin-metric-coefficients}, \eqref{eqn:albin-X-preserve-grading}, and~\eqref{eqn:albin-X-graded-lie-algebra} with the Koszul formula implies that the Levi-Civita connection has degree zero with respect to the $\mathbb{Z}_2$-grading;
    i.e.
    \begin{equation*}
        \Gamma_{ab}^c \in \mFphg\bigl((-1)^s\bigr) , \quad s := \# \bigl\{ i \in \{ a , b , c \} \mathrel{}:\mathrel{} i \not= 0 \bigr\} .
    \end{equation*}
    It follows that $\nabla^\ell\Rm \in \mFphg^{\ell+4}(1)$ for each integer $\ell \geq 0$.

    We conclude by computing the critical coefficients of $(\nabla^\ell\Rm)_{(n-1,1)}$;
    the case of $(\nabla^\ell\Rm)_{(n-1,0)}$ when $n$ is even is similar.
    Define $g_{00},g_{0i},g_{ij}$ as in Equation~\eqref{eqn:ah-metric-components}.
    Since $g_+$ is even and asymptotically hyperbolic, $(g_{00})_{(0,0)}=1$ and $(g_{0i})_{(0,0)}=0$;
    moreover, $h_{ij} := (g_{ij})_{(0,0)}$ defines an invertible matrix $(h_{ij})_{i,j=1}^{n-1}$.
    Let $(h^{ij})_{i,j=1}^{n-1}$ denote its inverse.
    Set $\mathcal{K}_{ij} := (g_{ij})_{(n-1,1)}$;
    since $g_+$ is even, $(g_{00})_{(n-1,1)} = 0$, $(g_{0i})_{(n-1,1)}=0$, and $h^{ij}\mathcal{K}_{ij} = 0$.
    Direct computation (cf.\ \cite{CKLTY2024}*{Proof of Lemma~4.1}) yields $(\Rm^{g_+} + \frac{1}{2}g_+ \wedge g_+)_{(0,0)}=0$ and
    \begin{equation}
        \label{eqn:leading-terms-of-ah-rm}
        \begin{aligned}
        (\Gamma_{ij}^0)_{(0,0)} & = h_{ij} , & (\Gamma_{ij}^0)_{(n-1,1)} & = -\frac{n-3}{2}\mathcal{K}_{ij} , \\
        (\Gamma_{0i}^j)_{(0,0)} & = 0 , & (\Gamma_{0i}^j)_{(n-1,1)} & = \frac{n-1}{2}h^{jk}\mathcal{K}_{ik} , \\
        (R_{0i0j})_{(0,0)} & = -h_{ij} , & (R_{0i0j})_{(n-1,1)} & = -\frac{n^2-4n+5}{2}\mathcal{K}_{ij} , \\
        (R_{ijkl})_{(0,0)} & = -2h_{i[k}h_{l]j} , & (R_{ijkl})_{(n-1,1)} & = \frac{n-3}{2}\bigl( \mathcal{K} \wedge h \bigr)_{ijkl} ,
        \end{aligned}
    \end{equation}
    and all other components of not obtained from these by symmetry vanish.
    The conclusion follows by differentiation.
\end{proof}

The evenness of natural Riemannian $k$-forms follows:

\begin{corollary}
    \label{cor:parity-of-natural-riemannian-objects}
    Let $(M^n,g_+)$ be an even asymptotically hyperbolic manifold.
    If $\omega$ is a natural Riemannian $k$-form, then $\omega^{g_+} \in \mF^k(1)$.
\end{corollary}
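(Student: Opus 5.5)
The plan is to reduce everything to Lemma~\ref{lem:albin-expansion} via the naturality of $\omega$. By definition, a natural Riemannian $k$-form is a linear combination of partial contractions of tensors of the form
\begin{equation*}
    \nabla^{\ell_1}\Rm \otimes \dotsm \otimes \nabla^{\ell_p}\Rm \otimes g_+^{\otimes a} \otimes (g_+^{-1})^{\otimes b} .
\end{equation*}
Since $\mF^\bullet := \bigoplus_k \mF^k$ is a $\mathbb{Z}_2$-graded module over $\mF$ and the grading is additive under tensor products, it will suffice to show two things: each factor is even, and contraction preserves evenness.

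For the factors, Lemma~\ref{lem:albin-expansion} already gives $\nabla^\ell\Rm \in \mFphg^{\ell+4}(1)$. The metric $g_+$ lies in $\mFphg^2(1)$ by Equation~\eqref{eqn:ah-metric-components} and the evenness assumption. For the inverse, I will work in the frame $X_a$ from the proof of Lemma~\ref{lem:albin-expansion}, in which $g_+(X_a,X_b)=g_{ab}$ is an invertible matrix whose entries have parity given by the number of nonzero indices. Cramer's rule, or equivalently a Neumann series about the leading term $\mathrm{diag}(1,h)$, shows that the inverse components $g^{ab}$ have the same parity pattern. Hence contracting an index pair with $g_+^{-1}$ removes two indices whose nonzero counts sum to an even number modulo the parity of $g^{ab}$, and the parity of the resulting component is preserved. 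Equivalently, contraction is a degree-zero operation on the graded module.

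The main obstacle is upgrading from the phg classes $\mFphg(1)$ to the strict classes $\mF(1)$. These additionally require that the coefficients at order $\rho^{n-1}\log\rho$, and at $\rho^{n-1}$ when $n$ is even, vanish for components with an even number of nonzero indices. Here I will use the second half of Lemma~\ref{lem:albin-expansion}. The critical coefficients of each $\nabla^\ell\Rm$ are partial contractions of $\mathcal{K}\otimes h^{\otimes s}$, with $\mathcal{K}$ trace-free, and they vanish when an odd number of indices is nonzero.

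In a product, such a critical coefficient can only occur linearly, paired with the leading $(0,0)$ terms of all other factors and of $g_+^{\pm1}$. Those leading terms are built from $h$ and $\delta_0$ alone, by $(\Rm+\tfrac12 g_+\wedge g_+)_{(0,0)}=0$. After full contraction down to a $k$-form component, the critical coefficient of $\omega$ is therefore a natural linear expression in $\mathcal{K}$ with coefficients built from $h$. This expression carries exactly the tangential indices of the output component. For the coefficient to be nonzero, the number of tangential output indices must be such that the total parity is odd, since the even $\mathcal{K}$-terms contribute only to odd-parity components by the lemma. That is exactly the requirement for membership in $\mF^k(1)$ rather than merely $\mFphg^k(1)$.

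I will also need to check that for a scalar or vector-valued result, invariant linear expressions in a trace-free $\mathcal{K}$ vanish. For a $k$-form output, skew-symmetry kills the remaining symmetric $\mathcal{K}_{ij}$ contributions in components with even parity. I expect this bookkeeping to be the delicate point, but it follows the pattern of \cite{CKLTY2024}.
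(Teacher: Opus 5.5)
Your proposal follows the paper's proof: membership in $\mFphg^k(1)$ from Lemma~\ref{lem:albin-expansion}, then the observation that the critical coefficient of $\omega^{g_+}$ is linear in $\mathcal{K}$ with coefficients built from $h$, then its vanishing because $\mathcal{K}$ and $h$ are symmetric while $\omega$ is skew (with $\tr_h\mathcal{K}=0$ handling the scalar components, a point the paper leaves implicit). One correction: your penultimate paragraph has the parity backwards, since the lemma says the $\mathcal{K}$-terms vanish on components with an odd number of nonzero indices, so they sit exactly in the even-parity components where $\mF^k(1)$ forbids them, and your final skew-symmetry/trace-free step is therefore the entire content of the upgrade rather than leftover bookkeeping.
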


\begin{proof}
    By definition, $\omega^{g_+}$ is a linear combination of partial contractions of tensors
    \begin{equation*}
        \nabla^{I_1}\Rm \otimes \dotsm \otimes \nabla^{I_p}\Rm \mathop{\otimes} g^{\otimes J} .
    \end{equation*}
    Since $g_+$ is even, we deduce from Lemma~\ref{lem:albin-expansion} that $\omega^{g_+} \in \mFphg^k(1)$.

    We now show that $(\omega^{g_+})_{(n-1,1)}=0$;
    the proof that $(\omega^{g_+})_{(n-1,0)}=0$ if $n$ is even is similar.
    
    Since no log terms appear in the expansion of $\omega^{g_+}$ below order $\rho^{n-1}\log\rho$, we see that $(\omega^{g_+})_{(n-1,1)}$ is a linear combination of partial contractions of
    \begin{align*}
        (\nabla^{I_1}\Rm)_{(n-1,1)} \otimes (\nabla^{I_2}\Rm)_{(0,0)} \otimes \dotsm \otimes (\nabla^{I_p}\Rm)_{(0,0)} \otimes (g)_{(0,0)} \otimes \dotsm \otimes (g)_{(0,0)} , \\
        (\nabla^{I_1}\Rm_{(0,0)}) \otimes \dotsm \otimes (\nabla^{I_p}\Rm)_{(0,0)} \otimes (g)_{(n-1,1)} \otimes (g)_{(0,0)} \otimes \dotsm \otimes (g)_{(0,0)} .
    \end{align*}
    We deduce from Lemma~\ref{lem:albin-expansion} that the components of $(\omega^{g_+})_{(n-1,1)}$ are linear combinations of partial contractions of
    \begin{equation*}
        \mathcal{K} \otimes h \otimes \dotsm \otimes h .
    \end{equation*}
    Since $\mathcal{K}$ and $h$ are symmetric, the skew symmetry of $\omega$ yields $(\omega^{g_+})_{(n-1,1)}=0$.
\end{proof}

\subsection{Conformally compact minimal submanifolds}
\label{subsec:submanifold}

We now study asymptotically minimal submanifolds of even asymptotically hyperbolic manifolds and compute the asymptotic expansions of natural submanifold tensors thereon.
These spaces include the conformally compact minimal submanifolds of conformally compact Einstein manifolds discussed in the Introduction.
Our presentation is heavily inspired by that of Graham and his coauthors~\cites{GrahamWitten1999,GrahamReichert2020,CGKTW24}, though our discussion of asymptotic expansions of natural submanifold tensors is new.

A nondegenerate submanifold $j \colon \Sigma^k \to (M^n,g_+)$ is \defn{conformally compact} if
\begin{enumerate}
    \item $(M^n,g_+)$ and $(\Sigma^k,j^\ast g_+)$ are conformally compact with conformal infinities $(\partial_\infty M,\kc)$ and $(\partial_\infty\Sigma,\kc_\Sigma)$, respectively,
    \item there is a conformal submanifold $j_\infty \colon \partial_\infty\Sigma \to (\partial_\infty M,\kc)$ such that $j^\ast\kc = \kc_\Sigma$, and
    \item there is a polyhomogeneous section $U$ of $N\partial_\infty\Sigma \to \overline{\Sigma}$ of class $\Cphg^{k+1}$ such that $U(0,\cdot)=0$ and if $\varrho > 0$, then
    \begin{equation*}
        ( F^{-1} \circ j \circ G)(\varrho , x ) = \bigl( \varrho , \exp^\perp U(\varrho,x) \bigr) ,
    \end{equation*}
    where $F$ and $G$ are the collar neighborhoods of the compactifications $\overline{M}$ and $\overline{\Sigma}$, respectively, and $\exp^\perp$ is defined using $\rho^2g_+\vert_{T\partial_\infty M}$.
\end{enumerate}
Throughout this section, $\rho$ and $\varrho$ denote the coordinates on the first factor of the collar neighborhoods of $\overline{M}$ and $\overline{\Sigma}$, respectively.
In this case we call $j_\infty$ the \defn{conformal infinity} of $j$.
We say that $j$ is \defn{asymptotically minimal} if its mean curvature, regarded as a section of $N^\ast\Sigma$, satisfies $H = O(\varrho^{k-1})$.
This is equivalent to the requirement that $H^\sharp = O(\varrho^{k+1})$ as a section of $N\Sigma$ (cf.\ \cite{GrahamReichert2020}*{Theorem~3.1}), where $\sharp$ is defined via $g_+$.

Let $(M^n,g_+)$ be an even asymptotically hyperbolic manifold.
A conformally compact submanifold $j \colon \Sigma^k \to (M^n,g_+)$ is \defn{even} if for each local frame $\{ e_{\alpha'} \}_{\alpha'=k}^{n-1}$ for $N\partial_\infty\Sigma$, the normal bundle of $j_\infty$, we have that
\begin{equation}
    \label{eqn:components-of-U}
    U(\varrho,x) = \sum_{\alpha'=k}^{n-1} U^{\alpha'}(\varrho,x)e_{\alpha'}(x) 
\end{equation}
for functions $U^{\alpha'}$ of class $\Cphg^{k+1}$ satisfying $(U^{\alpha'})_{(2i+1,0)}=0$ if $2i < k$.
Asymptotically minimal submanifolds of even asymptotically hyperbolic manifolds are even.
We prove this by modifying an argument of Graham and Witten~\cite{GrahamWitten1999}.

\begin{lemma}
    \label{lem:even-submanifolds-facts}
    Let $j \colon \Sigma^k \to (M^n,g_+)$ be an asymptotically minimal submanifold of an even asymptotically hyperbolic manifold.
    Then $j$ is even and $(\Sigma^k,j^\ast g_+)$ is a strongly even asymptotically hyperbolic manifold.
\end{lemma}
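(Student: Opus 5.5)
We will follow the Graham--Witten argument: write the mean curvature of $j$ in the collar coordinates $(\rho,x)$ and read off the Taylor coefficients of $U$ order by order, using the $\mathbb{Z}_2$-gradings of Section~\ref{sec:albin} to track parity. Work locally on the image of a boundary chart. Take coordinates $(x^\alpha,u^{\alpha'})$ on $\partial_\infty M$ that are Fermi coordinates for $j_\infty$ with respect to $h:=\rho^2g_+\rvert_{T\partial_\infty M}$, and extend them by $\rho$. Then $j\circ G$ has the form $(\varrho,x)\mapsto(\varrho,x,U^{\alpha'}(\varrho,x))$, up to the exponential map correction. That correction is smooth in $U$ and, since $\exp^\perp$ and the Fermi chart are defined on $\partial_\infty M$, it does not affect parity in $\varrho$.

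\textbf{Step 1: the induced metric.} Compute
\[
\varrho^2 j^\ast g_+ = (g_{00}+2g_{0\alpha'}U^{\alpha'}_{,0}+g_{\alpha'\beta'}U^{\alpha'}_{,0}U^{\beta'}_{,0})\,d\varrho^2+\dotsb
\]
with the metric components evaluated at $(\varrho,x,U(\varrho,x))$. We use the parities $g_{00}\in\mF(1)$, $g_{0i}\in\mF(-1)$, $g_{ij}\in\mFphg(1)$, and assume inductively that $U$ is even up to the order reached so far. Then $U_{,0}$ is odd and $U_{,\alpha}$ is even, so the $d\varrho^2$ coefficient is even, the $d\varrho\,dx^\alpha$ coefficient is odd, and the $dx^\alpha dx^\beta$ coefficient is even, with the appropriate log/capping behaviour. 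In other words, $j^\ast g_+$ is even in the sense of Section~\ref{sec:albin} once $U$ is.

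Asymptotic hyperbolicity holds because $U(0,\cdot)=0$, which gives $\lvert d\varrho\rvert^2=g_{00}(0)=1$ on the boundary. Strong evenness ($g_{ij}$ in $\mF$, not merely $\mFphg$) also needs the first log coefficient and the critical coefficients of the induced metric to vanish at the relevant order. We expect this to follow from the trace conditions on $(g_{ij})_{(n-1,\cdot)}$ together with $n-1\ge k+1$: any log term from $g_+$ enters only at order $\varrho^{n-1}$, which lies beyond $\varrho^{k+1}$, while the first log term of $U$ enters at order $\varrho^{k+1}$.

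\textbf{Step 2: the recursion for $U$.} Following Proposition~\ref{prop:extrinsic-straight-and-normal}, and Graham--Witten, compute $H$ modulo higher order, in the form
\[
kH_{\alpha'}=\varrho^2\bigl(\partial_\varrho^2U_{\alpha'}-(k-1)\varrho^{-1}\partial_\varrho U_{\alpha'}\bigr)+\varrho^{2}F_{\alpha'}(\varrho,x,U,\partial U,\partial^2U),
\]
where $F$ involves metric components and lower-order terms. The indicial operator $\varrho^2\partial_\varrho^2-(k-1)\varrho\partial_\varrho$ acting on $\varrho^{m}$ gives $m(m-k)\varrho^m$. Hence the coefficient $U_{(m,0)}$ is determined by the lower coefficients whenever $m\neq k$ and $m\le k+1$, and $H=O(\varrho^{k-1})$ forces the corresponding equation to hold through order $\varrho^{k}$ (after multiplying by the $\varrho$ weights in the normal direction; we will check the bookkeeping against $H^\sharp=O(\varrho^{k+1})$). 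The key parity claim is that if $U$ is even below order $m$, then the forcing term at order $\varrho^m$ vanishes for odd $m$. This follows from the graded-algebra properties: $F$ is built from even/odd pieces in a degree-preserving way, exactly as in the proof of Lemma~\ref{lem:albin-expansion}, where the Levi-Civita connection has degree zero. The base case is $U_{(1,0)}$, which equals $\pm$ a multiple of the mean curvature of $j_\infty$ for $k\ge2$. This is the $m=1$ step, whose forcing comes from $\Gamma$ terms. We will have to confirm it is odd-order allowed; the statement demands $(U)_{(1,0)}=0$, so we will verify that the $\Gamma^{0}_{ij}$-type contributions give $m(m-k)U_{(1,0)}=0$ at leading order, as in Graham--Witten.

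\textbf{Main obstacle.} The delicate point is the bookkeeping of polyhomogeneous and log terms near order $k$ and $n-1$. At $m=k$ the coefficient is undetermined, and for even $k$ this is the allowed even freedom. For odd $k$ a $\varrho^{k+1}\log\varrho$ term may appear, which is permitted by class $\Cphg^{k+1}$. We must also show that the exponential-map composition and the substitution $g(\varrho,x,U)$ preserve parity. We will handle both by working with the frame $X_0=\varrho\partial_\varrho$, $X_\alpha=\varrho\partial_\alpha$ and invoking the degree-zero property of the connection established in Lemma~\ref{lem:albin-expansion}, now for the pulled-back connection and the normal projection.
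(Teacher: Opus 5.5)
Your strategy matches the paper's. You work in Fermi coordinates for $j_\infty$ lifted by the collar, where $\exp^\perp U(\varrho,x)$ is exactly $(x,U(\varrho,x))$, so there is no correction to track. You compute the mean curvature in the $\varrho$-scaled frames, kill the odd coefficients of $U$ order by order by parity, and read strong evenness off $h_{00},h_{0\alpha},h_{\alpha\beta}$. Your graded-algebra explanation of why only the new coefficient contributes at each step is fine. However, the core computation in Step~2 is off by one, and this breaks part of the argument.

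\textbf{The indicial roots.} The linearized operator is $\partial_\varrho^2-k\varrho^{-1}\partial_\varrho$, not $\partial_\varrho^2-(k-1)\varrho^{-1}\partial_\varrho$; you can check this by varying $\int\varrho^{-k}(1+\tfrac12\lvert\nabla u\rvert^2)$ in hyperbolic space. So $\varrho^m$ produces $m(m-1-k)$, and the indicial roots are $0$ and $k+1$. This is the paper's identity $k(H_{\gamma'})_{(2\ell+2,0)}=(2\ell+3)(2\ell+2-k)\xi_{\gamma'}$ with $\xi=(U)_{(2\ell+3,0)}$, where $H_{\gamma'}=H(Y_{\gamma'})=O(\varrho^k)$.

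Hence $(U)_{(m,0)}$ is constrained exactly for $m\le k$, and the free coefficient sits at order $k+1$. Your claims that the order-$k$ coefficient is free, and that orders up to $k+1$ are determined, are both false. For even $k$ this is harmless, since odd $m\le k-1$ have a nonzero factor either way. For odd $k$, evenness of $j$ requires $(U)_{(k,0)}=0$ (the case $2i=k-1<k$), and your recursion declares that coefficient undetermined. With the correct factor $k\cdot(-1)\neq 0$, it vanishes by the same parity argument as the lower odd orders.

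\textbf{The base case.} This is also wrong as stated. $(U)_{(1,0)}$ is not a multiple of the mean curvature of $j_\infty$; that is $(U)_{(2,0)}$. Instead $(U)_{(1,0)}$ must vanish. Your linear recursion also does not apply at $m=1$: when only $U=O(\varrho)$ is known, $U_{,0}=O(1)$, so the quadratic terms in $U_{,0}$ enter at the same order. The paper handles this directly. At leading order,
$h^{00}=(1+U^{\alpha'}_{,0}U^{\beta'}_{,0}g_{\alpha'\beta'})^{-1}$, $L_{00\gamma'}=-U^{\alpha'}_{,0}g_{\alpha'\gamma'}$, and $L_{\alpha\beta\gamma'}=-h^{00}U^{\alpha'}_{,0}g_{\alpha'\gamma'}g_{\alpha\beta}$.
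Hence $kH(Y_{\gamma'})=-kh^{00}U^{\alpha'}_{,0}g_{\alpha'\gamma'}+O(\varrho)$, which forces $\partial_\varrho U=0$ at $\varrho=0$.

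\textbf{Asymptotic hyperbolicity.} This orthogonality is exactly what asymptotic hyperbolicity of $j^\ast g_+$ requires. On the boundary, $\lvert d\varrho\rvert^2_{\varrho^2 j^\ast g_+}=h^{00}=(1+\lvert\partial_\varrho U\rvert^2)^{-1}$. So $U(0,\cdot)=0$ alone, as you claim in Step~1, does not suffice.

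\textbf{The dimension condition in Step~1.} You invoke $n-1\geq k+1$, which fails for hypersurfaces. You only need $n-1\geq k$. Evenness of $j^\ast g_+$ on the $k$-manifold $\Sigma$ concerns orders up to $\varrho^{k-1}$. The logarithms of $g_+$ start at order $n-1$, and those of $U$ contribute to $h$ only at order $\varrho^{k}\log\varrho$ or higher. The trace conditions on $(g_{ij})_{(n-1,\cdot)}$ play no role.
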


\begin{proof}
    Fix $p \in \partial_\infty\Sigma$.
    Let $(x^\alpha,u^{\alpha'})$ be Fermi coordinates near $j_\infty(p) \in \partial_\infty M$.
    Lift these to coordinates $(\rho,x^\alpha,u^{\alpha'})$ and $(\varrho,x^\alpha)$ on $M$ and $\Sigma$, respectively, via the appropriate collar neighborhoods.
    Set
    \begin{align*}
        X_a & := \varrho\partial_{x^a} , && \text{on $\Sigma$, where $a \in \{ 0, \dotsc k-1 \}$} , \\
        Z_A & := \rho\partial_{z^A} , && \text{on $M$, where $A \in \{ 0, \dotsc, n-1 \}$} ,
    \end{align*}
    with the conventions $x^0=\varrho$ and $z^0 = \rho$.
    Define a local frame of $T\Sigma \subset j^{-1}TM$ by
    \begin{equation*}
        Y_a := dj(X_a) = Z_a + U_{,a}^{\alpha'}Z_{\alpha'} ,
    \end{equation*}
    where $U^{\alpha'}$ is as in Equation~\eqref{eqn:components-of-U} and $U_{,a}^{\alpha'} := \partial_{x^a} U^{\alpha'}$.
    Let $g_{AB}$ denote the components of $g_+$ with respect to the local frame $\{ Z_A \}$.
    Denote by
    \begin{equation}
        \label{eqn:ah-induced-metric-components}
        \begin{aligned}
            h_{00} & := g_+(Y_0,Y_0) = g_{00} + 2U_{,0}^{\alpha'}g_{0\alpha'} + U_{,0}^{\alpha'}U_{,0}^{\beta'}g_{\alpha'\beta'} , \\
            h_{0\alpha} & := g_+(Y_0,Y_\alpha) = g_{0\alpha} + U_{,\alpha}^{\alpha'}g_{0\alpha'} + U_{,0}^{\beta'}g_{\alpha\beta'} + U_{,\alpha}^{\alpha'}U_{,0}^{\beta'}g_{\alpha'\beta'} , \\
            h_{\alpha\beta} & := g_+(Y_\alpha,Y_\beta) = g_{\alpha\beta} + 2U_{,(\alpha}^{\alpha'}g_{\beta)\alpha'} + U_{,\alpha}^{\alpha'}U_{,\beta}^{\beta'}g_{\alpha'\beta'} ,
        \end{aligned}
    \end{equation}
    the components of $j^\ast g_+$ with respect to the local frame $\{ X_a \}$.
    Let $h^{ab}$ denote the components of the inverse of $(h_{ab})$.
    Set
    \begin{equation}
        \label{eqn:ah-Yalphaprime}
        \begin{aligned}
            Y_{\alpha'} & := \varrho\partial_{x^{\alpha'}} - f_{\alpha'}^0 Y_0 - f_{\alpha'}^\alpha Y_\alpha , \\
            f_{\alpha'}^0 & := h^{00}(g_{0\alpha'} + U^{\beta'}_{,0}g_{\alpha'\beta'}) + h^{\alpha0}(g_{\alpha\alpha'} + U^{\beta'}_{,\alpha}g_{\alpha'\beta'}) , \\
            f_{\alpha'}^\alpha & := h^{0\alpha}(g_{0\alpha'} + U^{\beta'}_{,0}g_{\alpha'\beta'}) + h^{\alpha\beta}(g_{\beta\alpha'} + U^{\beta'}_{,\beta}g_{\alpha'\beta'}) .
        \end{aligned}
    \end{equation}
    It is straightforward to check that $\{ Y_{\alpha'} \}$ is a local frame for $N\Sigma$.
    
    We now compute the components
    \begin{equation*}
        L_{ab\gamma'} := g_+( \nabla^{g_+}_{Y_a} Y_b , Y_{\gamma'} )
    \end{equation*}
    of the second fundamental form of $j$.
    Direct computation gives
    \begin{equation}
        \label{eqn:ah-L-components}
        \begin{aligned}
        L_{ab\gamma'} & = \left( \Gamma_{ab}^C + U_{,b}^{\beta'}\Gamma_{a\beta'}^C + U_{,a}^{\alpha'}\Gamma_{\alpha'b}^C + U_{,a}^{\alpha'}U_{,b}^{\beta'}\Gamma_{\alpha'\beta'}^C \right)\tilde g_{C\gamma'} + \varrho U_{,ab}^{\alpha'}\tilde g_{\alpha'\gamma'} , \\
        \tilde g_{A\gamma'} & := g_+(Z_A,Y_{\gamma'}) = g_{A\gamma'} - f_{\gamma'}^0(g_{A0} + U_{,0}^{\alpha'}g_{A\alpha'}) - f_{\gamma'}^\alpha(g_{A\alpha} + U_{,\alpha}^{\alpha'}g_{A\alpha'}) ,
        \end{aligned}
    \end{equation}
    where the connection coefficients $\Gamma_{AB}^C$ are defined by $\nabla_{Z_A}^{g_+}Z_B = \Gamma_{AB}^CZ_C$ as in Equation~\eqref{eqn:defn-0-christoffel}.
    Moreover, since $j$ is asymptotically minimal,
    \begin{equation}
        \label{eqn:ah-H-components}
        h^{ab}L_{ab\gamma'} = kH(Y_{\gamma'}) \in O(\varrho^{k}) .
    \end{equation}

    We first show that $U^{\alpha'} = O(\varrho^2)$.
    Since $j$ is conformally compact, $U^{\alpha'} = O(\varrho)$.
    Recall that $g_{00} = 1 + O(\rho^2)$ and $g_{0\alpha} = O(\rho)$.
    Thus
    \begin{align*}
        h_{00} & = 1 + U_{,0}^{\alpha'}U_{,0}^{\beta'}g_{\alpha'\beta'} + O(\varrho) , & h_{0\alpha} & = O(\varrho), & h_{\alpha\beta} & = g_{\alpha\beta} + O(\varrho) , \\
        h^{00} & = (1 + U_{,0}^{\alpha'}U_{,0}^{\beta'}g_{\alpha'\beta'})^{-1} + O(\varrho) , & h^{0\alpha} & = O(\varrho), & h^{\alpha\beta} & = g^{\alpha\beta} + O(\varrho) .
    \end{align*}
    The coefficients $f_{\alpha'}^a$ and $\tilde g_{A\gamma'}$ in Equations~\eqref{eqn:ah-Yalphaprime} and~\eqref{eqn:ah-L-components}, respectively, that are nonzero mod $O(\varrho)$ are
    \begin{align*}
        f_{\alpha'}^0 & = h^{00}U_{,0}^{\beta'}g_{\alpha'\beta'} + O(\varrho) , \\
        \tilde g_{0\gamma'} & = -h^{00}U_{,0}^{\alpha'}g_{\alpha'\gamma'} + O(\varrho) , \\
        \tilde g_{\alpha'\gamma'} & = g_{\alpha'\gamma'} - h^{00}U_{,0}^{\beta'}U_{,0}^{\delta'}g_{\alpha'\beta'}g_{\gamma'\delta'} + O(\varrho) .
    \end{align*}
    Equations~\eqref{eqn:commute-0-frame} and~\eqref{eqn:leading-terms-of-ah-rm} imply that
    \begin{equation*}
        \Gamma_{ij}^0 = h_{ij} + O(\rho) , \quad \Gamma_{i0}^j = -\delta_i^j + O(\rho) ,
    \end{equation*}
    and all other connection coefficients $\Gamma_{AB}^C$ vanish mod $O(\rho)$.
    Therefore
    \begin{align*}
        L_{00\gamma'} & = -U_{,0}^{\alpha'}g_{\alpha'\gamma'} + O(\varrho) , \\
        L_{\alpha\beta\gamma'} & = -h^{00}U_{,0}^{\alpha'}g_{\alpha'\gamma'}g_{\alpha\beta} + O(\varrho) .
    \end{align*}
    We deduce from Equation~\eqref{eqn:ah-H-components} that $U^{\alpha'} = O(\varrho^2)$.

    We next show that $(U^{\alpha'})_{(2i+1,0)}=0$ for all integers $0 \leq i \leq (k-2)/2$.
    The case $i=0$ is done.
    Suppose that $0 \leq \ell \leq (k-4)/2$ is an integer such that $(U^{\alpha'})_{(2i+1,0)}=0$ for all integers $0 \leq i \leq \ell$.
    Set $\xi^{\alpha'} := (U^{\alpha'})_{(2\ell+3,0)}$.
    Since $g_+$ is even, Equations~\eqref{eqn:ah-induced-metric-components} imply that $h_{00},h_{\alpha\beta}$ mod $O(\varrho^{2\ell+3})$ are even and that $h_{0\alpha}$ mod $O(\varrho^{2\ell+4})$ is odd.
    Combining this with Equations~\eqref{eqn:ah-Yalphaprime} and~\eqref{eqn:ah-L-components} yields
    \begin{equation*}
        (f_{\alpha'}^0)_{(2\ell+2,0)} = (2\ell+3)\xi_{\alpha'} \quad\text{and}\quad (\tilde g_{0\gamma'})_{(2\ell+2,0)} = -(2\ell+3)\xi_{\gamma'} .
    \end{equation*}
    Combining this with Equations~\eqref{eqn:leading-terms-of-ah-rm} and the evenness of $g_+$ and of $U^{\alpha'}$ yields
    \begin{align*}
        (L_{00\gamma'})_{(2\ell+2,0)} & = (2\ell+1)(2\ell+3)\xi^{\beta'}g_{\beta'\gamma'} , \\
        (L_{\alpha\beta\gamma'})_{(2\ell+2,0)} & = -(2\ell+3)\xi^{\beta'}g_{\beta'\gamma'}g_{\alpha\beta} .
    \end{align*}
    Equation~\eqref{eqn:ah-H-components} then yields
    \begin{equation*}
        k(H_{\gamma'})_{(2\ell+2,0)} = (2\ell+3)(2\ell+2-k)\xi^{\beta'}g_{\beta'\gamma'} .
    \end{equation*}
    Since $2\ell + 2 \leq k - 2 < k$, we conclude that $\xi^{\alpha'}=0$.

    Finally, since $U^{\alpha'}$ mod $O(\varrho^{k+1})$ has an even expansion, we conclude from Equations~\eqref{eqn:ah-induced-metric-components} that $(\Sigma^k , j^\ast g_+)$ is strongly even.
\end{proof}

In order to prove Theorem~\ref{thm:albin}, it now suffices to show that natural submanifold scalars are necessarily even when evaluated at asymptotically minimal submanifolds of even asymptotically hyperbolic manifolds.
To that end, denote by $\overline{\mathcal{F}}{}^\ell(\pm1)$ the $\mathbb{Z}_2$-grading on tensors of rank $\ell$ on the strongly even asymptotically hyperbolic manifold $(\Sigma,j^\ast g_+)$.
Denote by $j^{\ast}\mFphg^\ell(\pm1) \subseteq \overline{\mF}{}^\ell(\pm1)$ the image of $\mFphg^\ell(\pm1)$ under pullback.
We have the following extrinsic analogue of Lemma~\ref{lem:albin-expansion}:

\begin{lemma}
    \label{lem:expansion}
    Let $j \colon \Sigma^k \to (M^n,g_+)$ be an asymptotically minimal submanifold of an even asymptotically hyperbolic manifold.
    Let $\omega$ be a natural submanifold tensor of bi-rank $(\ell,0)$.
    Then $\omega^{j^\ast g_+} \in \overline{\mF}{}^\ell(1)$.
\end{lemma}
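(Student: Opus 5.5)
We will argue as in Lemma~\ref{lem:albin-expansion} and Corollary~\ref{cor:parity-of-natural-riemannian-objects}, working with the frames $\{X_a\}$ on $\Sigma$, $\{Z_A\}$ on $M$, and $Y_a$, $Y_{\alpha'}$ from the proof of Lemma~\ref{lem:even-submanifolds-facts}. By definition, $\omega^{j^\ast g_+}$ is a linear combination of partial contractions of the factors in~\eqref{eqn:tensor-to-be-contracted}. These factors are projections of $\nabla^{I}\Rm$ and iterated induced covariant derivatives $\onabla^{J}L$, contracted with the tangential and normal parts of $g_+^{-1}$. It therefore suffices to assign $\mathbb{Z}_2$-gradings to tensors with tangential and normal indices and to show three things: each factor is even, each contraction preserves parity, and the critical coefficients at order $\varrho^{k-1}$ (the $\log$ term, and the $\varrho^{k-1}$ term when $k$ is even) vanish for the contracted bi-rank $(\ell,0)$ result.

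\textbf{Step 1: grading.} A component with respect to the frame $\{Y_a,Y_{\alpha'}\}$ is declared to have parity $(-1)^t$, where $t$ counts indices different from $0$; primed indices count as nonzero. Lemma~\ref{lem:even-submanifolds-facts} shows that $U^{\alpha'}$ has an even expansion modulo $O(\varrho^{k+1})$. Combined with $U_{,0}^{\alpha'}=\varrho\partial_\varrho U^{\alpha'}$ and the rule that $\varrho\partial_\alpha$ flips parity, Equations~\eqref{eqn:ah-induced-metric-components} and~\eqref{eqn:ah-Yalphaprime} give that $h_{ab}$, $h^{ab}$, $f^a_{\alpha'}$, $\tilde g_{A\gamma'}$ and $g_+(Y_{\alpha'},Y_{\beta'})$ have the stated parities modulo $O(\varrho^{k+1})$. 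Pulling back $\nabla^I\Rm\in\mFphg^{I+4}(1)$ via $Y_a=Z_a+U^{\alpha'}_{,a}Z_{\alpha'}$ keeps it even, since $U^{\alpha'}_{,a}$ has the parity of $a$ and $Z_{\alpha'}$ is odd. The projections onto $T\Sigma$ and $N\Sigma$ are built from $f$, so they also preserve parity.

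\textbf{Step 2: $L$ and its derivatives.} Equation~\eqref{eqn:ah-L-components}, together with the degree-zero property of the $\Gamma^C_{AB}$ from Lemma~\ref{lem:albin-expansion}, shows that $L_{ab\gamma'}$ is even. The induced tangential connection coefficients $\overline\Gamma{}^c_{ab}$ have degree zero because $j^\ast g_+$ is strongly even. The normal connection coefficients $g_+(\nabla_{Y_a}Y_{\alpha'},Y_{\beta'})$ have degree zero by the same Koszul-type bookkeeping. An induction on $J$ then gives that $\onabla^J L$ and all projected $\nabla^I\Rm$ lie in $\mFphg(1)$-graded classes, hence $\omega^{j^\ast g_+}\in\mFphg^\ell$ even in the sense of $\overline{\mF}{}^\ell$ up to critical order.

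\textbf{Step 3: critical coefficients.} This is the main obstacle. We must show that $(\omega)_{(k-1,1)}=0$, and $(\omega)_{(k-1,0)}=0$ when $k$ is even. The ambient metric only has evenness up to order $\rho^{n-1}$, with $n>k$, so all ambient data are genuinely even through order $k$ and contribute no odd or log terms at order $k-1$. The only possible source is $U^{\alpha'}$ at order $k+1$, or a log term there. Because $U$ enters $h$ and $L$ via $U_{,a}$ and $\varrho U_{,ab}$, a coefficient $\varrho^{k+1}$ or $\varrho^{k+1}\log\varrho$ in $U$ first affects $L$ at order $\varrho^{k}$, through $(f^0_{\alpha'})$ and $\tilde g_{0\gamma'}$ as in the inductive computation of Lemma~\ref{lem:even-submanifolds-facts}. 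The same holds for $h_{ab}$, which is quadratic in $U_{,a}$ and so is affected only at order $\varrho^{2k}$, or linearly via $g_{a\alpha'}=O(\rho)$ at order $\varrho^{k+1}$. So the coefficients at order $k-1$ coincide with those of an even expansion. We will verify this carefully by tracking the first order at which a perturbation $\xi\varrho^{k+1}$ of $U$ enters each building block, as in~\eqref{eqn:ambient-mean-curvature-inductive-step}. If some block such as $Y_{\alpha'}$ picks up order $\varrho^{k}$ and derivatives shift it down, we would instead use the Corollary~\ref{cor:parity-of-natural-riemannian-objects} argument: express the critical coefficient via $\xi$ and $h$ and use the asymptotic minimality $H=O(\varrho^{k-1})$ to kill it.
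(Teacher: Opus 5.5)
Your proposal is correct and follows the paper's proof. As there, $Y_0$ is even and $Y_\alpha,Y_{\alpha'}$ are odd, so the projections, the projected $\nabla^I\Rm$, the induced connections and $L$ all have degree zero, and contracting against the strongly even $j^\ast g_+$ finishes. The paper leaves your Step~3 implicit, and your main-line observation already settles it: the non-even parts of $U$ (order $\varrho^{k+1}$) and of $g_+$ (order $\rho^{n-1}$ with $n-1\geq k$) first enter the building blocks at order $\varrho^{k}$ or higher, and the frame $X_a=\varrho\partial_{x^a}$ never lowers orders. So the fallback is unnecessary; the skew-symmetry trick of Corollary~\ref{cor:parity-of-natural-riemannian-objects} would not apply to general tensors anyway.

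One slip to fix. With the paper's convention $U^{\alpha'}_{,a}=\partial_{x^a}U^{\alpha'}$, the coefficient $U^{\alpha'}_{,0}=\partial_\varrho U^{\alpha'}$ is odd and $U^{\alpha'}_{,\alpha}$ is even. So it is $U^{\alpha'}_{,a}Z_{\alpha'}$, not $U^{\alpha'}_{,a}$, that has the parity of $Z_a$.
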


\begin{proof}
    Set $\overline{\mF}{}^{(1,0)}(\pm1) := \overline{\mF}{}^1(\pm1)$ and denote by $\overline{\mF}{}^{(0,1)}(\pm1)$ the vector space of sections $\xi$ of $N^\ast\Sigma$ of class $C^k$ such that
    \begin{equation*}
        \xi = \varrho^{-1}\xi_{\alpha'} \, dx^{\alpha'}
    \end{equation*}
    for some $\xi_{\alpha'} \in \overline{\mF}(\mp1)$.
    Denote by $\overline{\mF}{}^{(r,s)}(\pm1)$ the analogous vector space of sections of $(T^\ast\Sigma)^{\otimes r} \otimes (N^\ast\Sigma)^{\otimes s}$.
    We first show that if $\ell\geq0$ and $r,s\geq0$ are such that $r+s=\ell+4$, then $\pi\nabla^\ell\Rm \in \overline{\mF}{}^{(r,s)}(1)$ for appropriate projections $\pi$, and that if $\ell \geq 0$, then $\overline{\nabla}{}^\ell L \in \overline{\mF}{}^{(\ell+2,1)}(1)$.

    In the notation in the proof of Lemma~\ref{lem:even-submanifolds-facts}, it follows from the evenness of $U$ and $g_+$ that $Y_0$ is an even section of $T\Sigma$ and that $Y_\alpha$ and $Y_{\alpha'}$ are odd sections of $T\Sigma$ and $N\Sigma$, respectively.
    In particular, the projections $\pi \colon \mFphg^1 \to \overline{\mF}{}^{(1,0)}$ and $\pi \colon \mFphg^1 \to \overline{\mF}{}^{(0,1)}$ have degree zero.
    Lemma~\ref{lem:albin-expansion} then implies that $\pi\nabla^\ell\Rm$ is even.
    Moreover, the proof of Lemma~\ref{lem:albin-expansion} shows that $\nabla$ has degree zero, from which we deduce that $\overline{\nabla}$ and $L$ have degree zero.
    Therefore $\overline{\nabla}{}^\ell L$ is even.

    Now let $\omega$ be a natural submanifold tensor of bi-rank $(\ell,0)$.
    Then $\omega$ is a linear combination of partial contractions of tensors of the form~\eqref{eqn:tensor-to-be-contracted}.
    Since $j^\ast g_+$ is strongly even, we deduce from the previous paragraph that $\omega \in \overline{\mF}{}^\ell(1)$.
\end{proof}

One consequence of Lemma~\ref{lem:expansion} is our main result about renormalized extrinsic curvature integrals:

\begin{proof}[Proof of Theorem~\ref{thm:albin}]
    Recall that conformally compact Einstein manifolds are even asymptotically hyperbolic manifolds~\cite{ChruscielDelayLeeSkinner2005}.
    The conclusion follows from Proposition~\ref{prop:fundamental-renormalization} and Lemmas~\ref{lem:even-submanifolds-facts} and~\ref{lem:expansion}.
\end{proof}

Lemma~\ref{lem:expansion} also implies that the renormalized extrinsic curvature integral of  a natural extrinsic divergence is zero (cf.\ \cite{CKLTY2024}*{Lemma~4.1}):

\begin{lemma}
    \label{lem:renormalized-divergence-is-zero}
    Let $j \colon \Sigma^k \to (M^n,g_+)$ be an asymptotically minimal submanifold of an even asymptotically hyperbolic manifold.
    Let $\omega$ be a natural submanifold one-form.
    Then
    \begin{equation*}
        \intr \odivsymbol\omega \darea = 0 .
    \end{equation*}
\end{lemma}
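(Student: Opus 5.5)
The plan is to reduce Lemma~\ref{lem:renormalized-divergence-is-zero} to the divergence statement in Proposition~\ref{prop:fundamental-renormalization}, applied to the manifold $(\Sigma^k, j^\ast g_+)$ rather than to $(M^n,g_+)$.

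First, Lemma~\ref{lem:even-submanifolds-facts} shows that $(\Sigma^k,j^\ast g_+)$ is a strongly even asymptotically hyperbolic manifold. It is equipped with the collar neighborhood $G$ whose coordinate is $\varrho$, so it is in particular even. Second, Lemma~\ref{lem:expansion} with $\ell=1$ shows that $\omega^{j^\ast g_+}\in\overline{\mF}{}^1(1)$; that is, $\omega$ is an even one-form of class $\varrho^{-1}C^{k-1}$ on $\overline{\Sigma}$. Note that $\overline{\mF}$ is defined relative to the dimension $k$ of $\Sigma$, which is even here. Third, $\odivsymbol\omega$ is exactly the $j^\ast g_+$-divergence of $\omega$, and $\darea$ is $\dvol_{j^\ast g_+}$. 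Proposition~\ref{prop:fundamental-renormalization}(1), applied with $n$ replaced by $k$, then gives $\mathscr{V}_{\odivsymbol\omega}=0$ for every even defining function of $\partial_\infty\Sigma$.

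The remaining step, which I expect to be the main obstacle, is checking that the renormalization in Theorem~\ref{thm:albin} really is this $\mathscr{V}$. Theorem~\ref{thm:albin} cuts off at $j^{-1}(\{r>\varepsilon\})$, where $r$ is a geodesic defining function for $\partial_\infty M$. So I must show that $r\circ j$ is an even defining function for $\partial_\infty\Sigma$ in the sense used on $\overline{\Sigma}$. I would argue as follows. The function $r$ is even on $\overline{M}$, meaning $r/\rho\in\mF(1)$. By construction $\rho\circ j\circ G=\varrho$, so $(r\circ j)/\varrho=(r/\rho)\circ j$. The components of $j$ are $(\varrho,\exp^\perp U)$, and in Fermi coordinates $U^{\alpha'}$ has an even expansion mod $O(\varrho^{k+1})$ by Lemma~\ref{lem:even-submanifolds-facts}. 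Composing an even function with such a map should therefore yield an even function modulo the relevant order. The only care needed is that the $x$-coordinates have even expansions and the normal coordinates $u^{\alpha'}=U^{\alpha'}$ are even. Since $f(\rho,x,u)$ in $\mF(1)$ is even in $\rho$ jointly with smooth dependence on $(x,u)$, substituting even $u$ preserves evenness up to order $\varrho^{k-1}$. That is exactly the order required for membership in $\overline{\mF}(1)$ on a $k$-manifold, possibly with a $\log$ coefficient at order $k-1$ or $k$ that needs checking. Positivity of $(r\circ j)/\varrho$ on $\partial_\infty\Sigma$ is immediate.

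With this established, the finite part $\mathscr{I}$ in the expansion of Theorem~\ref{thm:albin} for $I=\odivsymbol\omega$ coincides with $\mathscr{V}_{\odivsymbol\omega}$ computed with the even defining function $r\circ j$. That quantity vanishes, so $\intr\odivsymbol\omega\darea=0$.
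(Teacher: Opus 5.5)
Your proposal is correct and follows the paper's proof, which is the one-line combination of Proposition~\ref{prop:fundamental-renormalization} (applied on $\Sigma^k$ with $k$ even) with Lemmas~\ref{lem:even-submanifolds-facts} and~\ref{lem:expansion}. The paper leaves implicit your extra check that $r\circ j$ is an even defining function on $\overline{\Sigma}$, and the log check you flag does go through: $U$ has no logarithmic terms below order $\varrho^{k+1}$, and $r/\rho$ has none below order $\rho^{n}$ with $n\geq k+1$.
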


\begin{proof}
    This follows immediately from Proposition~\ref{prop:fundamental-renormalization} and Lemmas~\ref{lem:even-submanifolds-facts} and~\ref{lem:expansion}.
\end{proof}

We conclude by deriving a Gauss--Bonnet--Chern-type formula:

\begin{lemma}
    \label{lem:renormalized-pfaffian}
    Let $j \colon \Sigma^k \to (M^n,g_+)$ be an asymptotically minimal submanifold of an even asymptotically hyperbolic manifold.
    Then
    \begin{equation*}
        (2\pi)^{k/2}\chi(\Sigma) = \intr \overline{\Pf} \darea ,
    \end{equation*}
    where $\overline{\Pf}$ is the Pfaffian of $j^\ast g_+$.
\end{lemma}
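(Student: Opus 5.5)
The plan is to reduce the statement to Albin's renormalized Gauss--Bonnet--Chern theorem for the intrinsic metric $j^\ast g_+$. By Lemma~\ref{lem:even-submanifolds-facts}, $(\Sigma^k,j^\ast g_+)$ is a strongly even asymptotically hyperbolic manifold of even dimension $k$. The Pfaffian $\overline{\Pf}$ is a natural Riemannian scalar of $j^\ast g_+$, hence a natural submanifold scalar. By Lemma~\ref{lem:expansion}, or directly by Corollary~\ref{cor:parity-of-natural-riemannian-objects} applied to $(\Sigma,j^\ast g_+)$, it lies in $\overline{\mF}(1)$. Proposition~\ref{prop:fundamental-renormalization} therefore applies. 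The renormalized integral $\smallintr \overline{\Pf}\darea$ is well defined and independent of the even defining function, and it is also unchanged by adding the divergence of any even one-form.

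Next I would pick a convenient even defining function on $\Sigma$. Since $j^\ast g_+$ is strongly even, I would construct a geodesic defining function $r$ for $j^\ast g_+$; it is even (cf.\ \cite{Guillarmou2005}). On $\{r<\varepsilon_0\}$ we can write $j^\ast g_+ = r^{-2}(dr^2 + h_r)$ with $h_r$ even in $r$ up to order $k$. Apply the Chern--Gauss--Bonnet theorem with boundary to the compact region $\{r\ge\varepsilon\}$:
\begin{equation*}
    (2\pi)^{k/2}\chi(\Sigma) = \int_{\{r>\varepsilon\}} \overline{\Pf}\darea + \int_{\{r=\varepsilon\}} \mathcal{T}\,,
\end{equation*}
where $\mathcal{T}$ is Chern's transgression form, built from the second fundamental form of the level set and the curvature, with constants normalized to match $\Pf_{k/2}$. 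Here $\chi(\{r\ge\varepsilon\})=\chi(\Sigma)$ for small $\varepsilon$.

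The main obstacle is showing that the finite part of the boundary term vanishes. In the coordinates $(r,x)$, the level-set second fundamental form is $-\tfrac12 r^{-1}\partial_r(r^{-2}h_r)$ rescaled, and the tangential curvature components are even expansions in $r$ by Lemma~\ref{lem:albin-expansion}. It follows that $\mathcal{T}$ restricted to $\{r=\varepsilon\}$ equals $\varepsilon^{1-k}$ times an even function of $\varepsilon$, valid modulo $o(\varepsilon^{k-1})$, times $\dvol_{h_0}$. Because $k-1$ is odd, its expansion contains only odd powers $\varepsilon^{1-k},\varepsilon^{3-k},\dotsc,\varepsilon^{-1}$ plus $o(1)$, so it has no constant term. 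This parity bookkeeping, using evenness only up to the order where the nonlocal term of $h_r$ enters (order $k$, which contributes only at $o(1)$), is the key step. It is exactly Albin's argument~\cite{Albin2009}, and I would follow it, or simply cite his theorem, which requires only an even asymptotically hyperbolic metric.

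Comparing constant terms then gives $(2\pi)^{k/2}\chi(\Sigma)=\smallintr\overline{\Pf}\darea$, computed with $r$. The independence of the choice of even defining function, from Proposition~\ref{prop:fundamental-renormalization}, shows this agrees with the renormalization in Theorem~\ref{thm:albin} via the ambient geodesic defining function restricted to $\Sigma$, which is even by Lemma~\ref{lem:even-submanifolds-facts}.
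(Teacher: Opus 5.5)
Your proposal is correct and follows the paper's route. The paper's proof is exactly your first step: Lemma~\ref{lem:even-submanifolds-facts} makes $(\Sigma,j^\ast g_+)$ an even asymptotically hyperbolic manifold, and the statement then follows by citing Albin's renormalized Gauss--Bonnet--Chern theorem. Your additional material only makes explicit what the paper's one-line citation leaves implicit: the sketch of Albin's parity argument for the transgression term, and the check via Proposition~\ref{prop:fundamental-renormalization} that the intrinsic geodesic defining function and the restricted ambient one give the same finite part.
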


\begin{proof}
    Lemma~\ref{lem:even-submanifolds-facts} implies that $(\Sigma^k,j^\ast g_+)$ is an even asymptotically hyperbolic manifold.
    The conclusion follows from a result of Albin~\cite{Albin2009}*{Theorem~4.5}.
\end{proof}

\section{Computing renormalized extrinsic curvature integrals}
\label{sec:renormalize}

In this section we compute a large class of renormalized extrinsic curvature integrals, including the renormalized area.
Our approach is analogous to that used to compute renormalized curvature integrals~\cite{CKLTY2024}.

A basic fact is that the integral of a conformal submanifold scalar of the appropriate weight is automatically convergent on conformally compact submanifolds:

\begin{lemma}
    \label{lem:basic-renormalization}
    Fix positive integers $k < n $ with $k$ even.
    Let $I$ be a conformal submanifold scalar of weight $-k$ on $k$-submanifolds of $n$-manifolds.
    If $j \colon \Sigma^k \to (M^n,g_+)$ is an asymptotically minimal submanifold of an even asymptotically hyperbolic manifold, then
    \begin{equation*}
        \intr I \darea = \int_\Sigma I^{j^\ast g_+} \darea_{j^\ast g_+} .
    \end{equation*}
\end{lemma}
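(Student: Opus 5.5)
The plan is to exploit conformal invariance to compute $I$ in a compactified metric, which is bounded up to the boundary; the weight $-k$ then cancels exactly against the volume growth. Fix an even defining function $r$ for $\partial_\infty M$; by Theorem~\ref{thm:albin} the renormalized integral does not depend on this choice. Write $g_+ = r^{-2}\bar g$, where $\bar g := r^2 g_+$, and set $\bar h := j^\ast\bar g$ and $\varrho' := r\circ j$. Since $I$ is a conformal submanifold scalar of weight $-k$, we have $I^{j,g_+} = I^{j,r^{-2}\bar g} = (\varrho')^{k}\, I^{j,\bar g}$. Also $\darea_{j^\ast g_+} = (\varrho')^{-k}\darea_{\bar h}$. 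Therefore
\begin{equation*}
    I^{j,g_+}\,\darea_{j^\ast g_+} = I^{j,\bar g}\,\darea_{\bar h} .
\end{equation*}
So the integrand on $j^{-1}(\{r>\varepsilon\})$ is the integrand of the compactified submanifold.

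The key step is to show that $I^{j,\bar g}$ is integrable on $\overline{\Sigma}$ with respect to $\darea_{\bar h}$; it suffices to show it is bounded. I would argue this regularity as follows. The metric $\bar g$ is polyhomogeneous of class $\Cphg^{n-1}$. The submanifold $j$ extends to $\overline{\Sigma}$ with $U$ of class $\Cphg^{k+1}$, with even expansion up to order $k$ by Lemma~\ref{lem:even-submanifolds-facts}. A natural submanifold scalar of weight $-k$ involves at most $k-2$ derivatives of the curvature and at most $k-2$ covariant derivatives of $L$. In Fermi-type coordinates, these are universal polynomials in at most $k$ derivatives of $\bar g$ and of $U$, by Proposition~\ref{prop:natural}.

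Here I expect the main obstacle. The $k$-th $\varrho$-derivative of a $\Cphg^{k+1}$ function behaves like $\varrho\log\varrho$ at worst, so it is still continuous up to the boundary. The same holds for $k$ derivatives of $\bar g$, since $n-1\ge k$. Some care is needed because, as noted after the definition of conformally compact manifolds, $[\rho^2 g_+]$ need not be smooth. Still, each coefficient remains $O(1)$, because every expansion has $m_1\ge k+1>k$. Hence $I^{j,\bar g}$ is bounded on $\overline{\Sigma}$, and $\darea_{\bar h}$ is a finite measure on the compact $\overline{\Sigma}$.

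Given boundedness, dominated convergence gives
\begin{equation*}
    \int_{j^{-1}(\{r>\varepsilon\})} I^{j,g_+}\darea_{j^\ast g_+} \to \int_\Sigma I^{j^\ast g_+}\darea_{j^\ast g_+}
\end{equation*}
as $\varepsilon\to0$. All divergent coefficients $a_{(2i)}$ therefore vanish, and the constant term $\mathscr I$ equals the convergent integral. This proves the lemma.

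An alternative, if the regularity bookkeeping is awkward, is to use Lemma~\ref{lem:expansion}. That lemma gives $I^{j,g_+}\in\overline{\mF}(1)$. Combined with the weight, the expansion of $I^{j,g_+}\darea$ in $\varrho$ begins at order $\varrho^{-1+k-k}$ times bounded terms. So again no negative powers survive in Proposition~\ref{prop:fundamental-renormalization}.
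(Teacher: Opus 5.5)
Your argument is essentially the paper's: conformal invariance rewrites the integrand as $I^{j,\bar g}\,\darea_{\bar h}$ for the compactified data (the paper takes $r=\rho$, so $\bar h=\varrho^2 j^\ast g_+$), this is integrable up to $\partial_\infty\Sigma$, and so the divergent coefficients vanish and the finite part is the actual integral. One small correction: boundedness of $k$ derivatives of $\bar g$ needs $n-1>k$, and in the hypersurface case $n=k+1$ an even asymptotically hyperbolic metric may carry a trace-free $\rho^{k}\log\rho$ term, so $I^{j,\bar g}$ is in general only $O(1+\lvert\log\varrho\rvert)$ rather than bounded; this is still integrable, so your dominated convergence argument goes through unchanged.
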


\begin{proof}
    Let $\varrho$ be as in Subsection~\ref{subsec:submanifold} and set $h := \varrho^2j^\ast g_+$.
    The conformal invariance of $I$ implies that
    \begin{equation*}
        \int_{\{ \varrho > \varepsilon\}} I^{j^\ast g_+}\darea_{j^\ast g_+} = \int_{\{ \varrho > \varepsilon \}} I^h \darea_h
    \end{equation*}
    for any $\varepsilon > 0$.
    Lemma~\ref{lem:even-submanifolds-facts} implies that $h$ is of class $C^k$, and hence $I^h$ is of class $C^0$.
    The conclusion readily follows.
\end{proof}

The main result of this section is a general formula for the renormalized extrinsic curvature integral of a straightenable submanifold scalar.

\begin{theorem}
    \label{thm:straight}
    Let $k \in \mathbb{N}$ be an even integer and let $I$ be a straightenable natural submanifold scalar of homogeneity $-2\ell \geq -k$ on $k$-submanifolds of $n$-manifolds.
    Let $j \colon \Sigma^k \to (M^n,g_+)$ be a conformally compact minimal submanifold of a conformally compact Einstein manifold.
    Then
    \begin{equation*}
        \intr I \darea = \frac{(-2)^{\ell-k/2}(\ell-1)!}{(k/2-1)!(k-2\ell-1)!!}\int_\Sigma \iota^\ast\left( \widetilde{\Delta}^{k/2-\ell}\cI \right) \darea ,
    \end{equation*}
    where $\widetilde{I}$ is a straight natural submanifold scalar to which $I$ is associated.
\end{theorem}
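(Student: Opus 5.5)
The plan mirrors the proof of Corollary~\ref{cor:compact-area}, with the Divergence Theorem replaced by Lemma~\ref{lem:renormalized-divergence-is-zero} and with $\lambda=-1$. First, a conformally compact Einstein metric $g_+$ satisfies $\Ric=-(n-1)g_+$, so $\lambda=-1$. Second, $j\colon\Sigma\to(M,g_+)$ is a minimal submanifold of an Einstein manifold, so Lemma~\ref{lem:minimal-in-einstein-ambient-space} provides its canonical extrinsic ambient space. There, $\cI^{\cjmath,\cg}=\tau^{-2\ell}\varpi^\ast I^{j,g_+}$.

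Next, apply Proposition~\ref{prop:straightIell} with $w=-2\ell$ and Laplacian power $m:=k/2-\ell$. By Equation~\eqref{eqn:I_ell-definition-with-lambda},
\begin{equation*}
 \cDelta^{m}\cI=\tau^{-k}\varpi^\ast\prod_{s=0}^{m-1}\bigl(\overline{\Delta}-(2s+2\ell)(k-2\ell-2s-1)\bigr)I .
\end{equation*}
The homogeneity is $-k\ge -k$, so Theorem~\ref{thm:construction-of-scalars} and Lemma~\ref{lem:straight-is-almost-conformal} identify $\bigl(\iota^\ast\cDelta^{m}\cI\bigr)^{j^\ast g_+}$ with the product on the right applied to $I^{j,g_+}$. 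This is a conformal submanifold scalar of weight $-k$. Lemmas~\ref{lem:even-submanifolds-facts} and~\ref{lem:basic-renormalization} then show that its renormalized integral equals its convergent integral over $\Sigma$.

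Now expand the product. Every term containing a factor $\overline{\Delta}$ is $\overline{\Delta}$ applied to a natural submanifold scalar, hence a natural divergence $\odivsymbol\omega$ with $\omega$ a natural submanifold one-form. Lemma~\ref{lem:renormalized-divergence-is-zero} kills the renormalized integral of each such term; this requires $j$ asymptotically minimal, which holds since $j$ is minimal. Linearity of renormalization (immediate from Theorem~\ref{thm:albin}) then gives
\begin{equation*}
 \int_\Sigma \iota^\ast(\cDelta^{m}\cI)\darea=\prod_{s=0}^{m-1}\bigl(-(2s+2\ell)(k-2\ell-2s-1)\bigr)\intr I\darea .
\end{equation*}

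It remains to evaluate the constant. We have $\prod_{s=0}^{m-1}(2s+2\ell)=2^m(k/2-1)!/(\ell-1)!$ and $\prod_{s=0}^{m-1}(k-2\ell-2s-1)=(k-2\ell-1)!!$, so the constant is $(-2)^{k/2-\ell}(k/2-1)!\,(k-2\ell-1)!!/(\ell-1)!$. Inverting it gives the stated formula.

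The main obstacle is the case $\ell=0$. There $(\ell-1)!$ is undefined and the $s=0$ factor is the bare $\overline{\Delta}$, so the product is itself a divergence. I expect the statement is meant for $\ell\ge1$, and I would restrict to that case or remark on it.

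A second point to check is that every ingredient, including $I$, is a natural submanifold scalar, so Lemma~\ref{lem:expansion} applies and the renormalization is legitimate term by term. Since straightenable invariants are natural by definition, this should be routine.
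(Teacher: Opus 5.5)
Your proposal is correct and follows the paper's proof. The paper packages your expansion of the product (with $\lambda=-1$) and the discarding of the $\overline{\Delta}$-terms into Corollary~\ref{cor:straight-mod-divergence}, then applies Lemmas~\ref{lem:straight-is-almost-conformal}, \ref{lem:renormalized-divergence-is-zero}, and~\ref{lem:basic-renormalization} exactly as you do, arriving at the same constant. Your caveat about $\ell=0$ is also right: the formula, like Corollary~\ref{cor:straight-mod-divergence}, implicitly requires $\ell\geq 1$, which is why the area term in Corollary~\ref{cor:renormalized-area} is handled separately.
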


\begin{proof}
    Let $I_\ell$ be the straightenable submanifold scalar defined in Proposition~\ref{prop:straightIell}.
    Set $\mathcal{I}_\ell := \iota^\ast\cDelta^{k/2-\ell}\widetilde{I}$.
    Then $\mathcal{I}_\ell$ has weight $-k$.
    Theorem~\ref{thm:albin} implies that renormalized extrinsic curvature integrals are well-defined.
    Applying Lemma~\ref{lem:straight-is-almost-conformal} and then Corollary~\ref{cor:straight-mod-divergence} and Lemma~\ref{lem:renormalized-divergence-is-zero} yields
    \begin{equation*}
        \intr \mathcal{I}_\ell \darea = \intr I_\ell \darea = \frac{(-2)^{k/2-\ell}(k/2-1)!(k-2\ell-1)!!}{(\ell-1)!}\intr I \darea .
    \end{equation*}
    The conclusion follows from Lemma~\ref{lem:basic-renormalization}.
\end{proof}

Our Gauss--Bonnet--Chern-type formula for conformally compact minimal submanifolds follows similarly.

\begin{proof}[Proof of Corollary~\ref{cor:renormalized-area}]
    Combine Equation~\eqref{eqn:evaluate-pfaffian-mod-div-general} with Lemmas~\ref{lem:renormalized-divergence-is-zero}, \ref{lem:renormalized-pfaffian}, and~\ref{lem:basic-renormalization}.
\end{proof}

\section{A rigidity result in hyperbolic space}
\label{sec:applications}

In this section we prove our rigidity result for conformally compact minimal submanifolds of a conformally compact hyperbolic manifold with umbilic conformal infinity.
The key fact is that the graphing function is locally determined to a given order by the conformal infinity~\cite{GrahamWitten1999}, from which we deduce improved order of vanishing of the length of the second fundamental form near the boundary:

\begin{lemma}
    \label{lem:second-fundamental-form-asymptotics}
    Let $j \colon \Sigma^k \to (M^n,g_+)$ be a conformally compact minimal submanifold of a conformally compact hyperbolic manifold.
    Suppose that $j_\infty$ is umbilic.
    Then the second fundamental form of $j$ satisfies $\lvert L \rvert = O(\varrho^k)$.
\end{lemma}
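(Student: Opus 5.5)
Our plan is to compare $j$, near each boundary point, with a totally geodesic model and use the Graham--Witten local determination of the graphing function. Since $(M^n,g_+)$ is conformally compact hyperbolic, it is locally isometric to hyperbolic space. Near any point of $\partial_\infty M$ we may use upper half-space coordinates $g_+ = \rho^{-2}(d\rho^2 + |dz|^2)$, where $\rho$ is a geodesic defining function and the boundary metric is flat. Umbilicity of $j_\infty$ is a conformally invariant condition, since $\intl$ is conformally invariant. Hence, after a M\"obius transformation of the boundary, $j_\infty(\partial_\infty\Sigma)$ lies locally in a round $(k-1)$-sphere or an affine $(k-1)$-plane; the latter can be achieved by an inversion, which is an isometry of $g_+$ locally. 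This uses the classification of umbilic submanifolds of flat space (Liouville-type, valid for $k-1\ge 2$, which holds since $k\ge 4$ in our application, or more generally via $\intl=0$). Over this plane there is the totally geodesic vertical $k$-plane $\Sigma_0 = \{u^{\alpha'}=0\}$, with graphing function $U_0\equiv 0$ in the sense of Subsection~\ref{subsec:submanifold}.

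Next, I would invoke the result of Graham and Witten~\cite{GrahamWitten1999}: for a minimal submanifold of a Poincar\'e--Einstein space, the graphing function $U$ modulo $O(\varrho^{k+1})$ is formally determined by $j_\infty$ and the boundary metric. Concretely, the inductive computation in the proof of Lemma~\ref{lem:even-submanifolds-facts} determines the even coefficients $(U^{\alpha'})_{(2i,0)}$ for $2i\le k$ recursively, with the coefficient $(2\ell+3)(2\ell+2-k)$ invertible below order $k+1$. For the evenness parity it also shows that there is no log term before order $\varrho^{k+1}$; in hyperbolic space the log coefficient is a multiple of the obstruction, which vanishes by comparison with $\Sigma_0$. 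Since $j$ and $\Sigma_0$ share the same conformal infinity locally, we conclude $U = O(\varrho^{k+1})$, and likewise for $\varrho$-derivatives: $\partial_\varrho^m U = O(\varrho^{k+1-m})$, and tangential derivatives are $O(\varrho^{k+1})$, by polyhomogeneity.

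Then I would plug $U=O(\varrho^{k+1})$ into Equations~\eqref{eqn:ah-induced-metric-components}, \eqref{eqn:ah-Yalphaprime} and \eqref{eqn:ah-L-components}. For $\Sigma_0$ we have $L\equiv 0$. The frame components $L_{ab\gamma'}$ (with respect to $X_a=\varrho\partial_{x^a}$) are polynomial in $U_{,a}=\varrho\partial U$ and in $\varrho U_{,ab}$, and linear terms dominate. All terms involving $U$ are $O(\varrho^{k+1})$ in these rescaled frames, apart from the $\Gamma^0_{ij}$-contributions in $L_{00\gamma'}$, which are of the form $\varrho^2\partial_\varrho^2U$ and $\varrho\partial_\varrho U$. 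These are again $O(\varrho^{k+1})$. Since the frames $\{Y_a\}$ and $\{Y_{\alpha'}\}$ are $g_+$-bounded with bounded inverse metric, we get $|L|_{g_+} = O(\varrho^{k+1})$, which in particular gives $O(\varrho^k)$. Compactness of $\partial_\infty\Sigma$ then gives the global estimate.

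The main obstacle is justifying rigorously that $U=O(\varrho^{k+1})$, including the absence of the $\varrho^{k+1}\log\varrho$ term when needed, or at least of any term at order $\varrho^k$. I would first try to settle only the weaker claim $U = O(\varrho^{k})$ with a $\varrho^{k+1}\log\varrho$ allowance. This suffices: differentiating once costs at most one power (the log only contributes $\varrho^k\log\varrho$), so the losses give $|L| = O(\varrho^{k}\log\varrho)$. To sharpen this to $O(\varrho^k)$, I would use the Graham--Reichert identification of the log coefficient with the obstruction $\mathcal{H}$, which vanishes here by Theorem~\ref{thm:obstruction} applied to $\Sigma_0$, a minimal submanifold of an Einstein manifold sharing the same local data.
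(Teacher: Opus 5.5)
Your route is the paper's. You pass to the half-space model with $j_\infty$ a piece of a flat $(k-1)$-plane, and you get $u=\varrho^{k+1}f+O(\varrho^{k+2})$ from Graham--Witten (equivalently, from local determination plus comparison with the totally geodesic vertical plane). You then compute $L$ in the frames $Y_a,Y_{\alpha'}$.

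\textbf{The gap.} In that last computation your count is off by one power of $\varrho$, which leads you to a false claim. In Equation~\eqref{eqn:ah-L-components}, $U^{\alpha'}_{,a}=\partial_{x^a}U^{\alpha'}$, not $\varrho\,\partial U$. The factor $\varrho$ in $X_a=\varrho\partial_{x^a}$ is absorbed by $Z_{\alpha'}=\rho\partial_{z^{\alpha'}}$ in $Y_a=Z_a+U^{\alpha'}_{,a}Z_{\alpha'}$. Hence $U_{,0}=(k+1)\varrho^kf+\dotsb$ and $\varrho U_{,00}=k(k+1)\varrho^kf+\dotsb$ are of order $\varrho^k$, not $\varrho^{k+1}$. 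For the hyperbolic metric, $\Gamma_{00}^C=\Gamma_{0\beta'}^C=0$ and $\Gamma_{\alpha'0}^C=-\delta_{\alpha'}^C$, so
\[
L_{00\gamma'}=\bigl(\varrho U^{\alpha'}_{,00}-U^{\alpha'}_{,0}\bigr)\tilde g_{\alpha'\gamma'}+\dotsb=(k^2-1)\varrho^kf^{\gamma'}+O(\varrho^{k+1}) .
\]
The $\Gamma^0_{ij}$ contribution that matters sits in the tangential block, not in $L_{00\gamma'}$. It enters as $\Gamma^0_{\alpha\beta}\tilde g_{0\gamma'}$ with $\tilde g_{0\gamma'}=-U^{\alpha'}_{,0}g_{\alpha'\gamma'}+\dotsb$, which gives $L_{\alpha\beta\gamma'}=-(k+1)\varrho^kf^{\gamma'}\delta_{\alpha\beta}+O(\varrho^{k+1})$. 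These are exactly the paper's formulas, and they give $|L|^2_{g_+}=k(k-1)(k+1)^2\varrho^{2k}|f|^2+O(\varrho^{2k+1})$.

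Since $f=U_{(k+1,0)}$ is the free global coefficient, your bound $|L|=O(\varrho^{k+1})$ is false in general, and $O(\varrho^k)$ is sharp. The corrected count gives precisely the lemma, so redoing this step repairs the argument.

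\textbf{The fallback.} Your last paragraph does not rescue things either. With the correct count, $U=O(\varrho^k)$ yields only $|L|=O(\varrho^{k-1})$. A surviving $\varrho^{k+1}\log\varrho$ term in $U$ would make $|L|$ of exact order $\varrho^k\log\varrho$. So you genuinely need every coefficient of $U$ below order $\varrho^{k+1}$, and the log coefficient $U_{(k+1,1)}$, to vanish.

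Both follow from the observation in your second paragraph. These coefficients are locally determined by $j_\infty$ and the flat boundary metric, so they agree with those of the totally geodesic plane, namely zero. For $k$ even, the case used in Theorem~\ref{thm:rigidity}, the log term is also excluded by parity.

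Theorem~\ref{thm:obstruction} is not the right tool here. It concerns the $(k-1)$-dimensional immersion $j_\infty$, not $\Sigma_0$, and it is relevant only when $k-1$ is even.
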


\begin{proof}
    Since the result is local near conformal infinity, we may assume that $(M^n,g_+)$ is hyperbolic space
    \begin{equation*}
        \bigl( M^n , g_+ \bigr) = \bigl( (0,\infty)_t \times \mathbb{R}^{n-1} , t^{-2}(dt^2+dx^2) \bigr)
    \end{equation*}
    and that $j_\infty$ is a piece of the embedding $\mathbb{R}^{k-1} \hookrightarrow \mathbb{R}^{k-1} \times \{ 0 \} \subset \mathbb{R}^{n-1}$.
    A computation of Graham and Witten~\cite{GrahamWitten1999}*{Equations~(2.14) and~(2.15)} implies that
    \begin{equation*}
        j(x) = \bigl( \varrho , x , u(\varrho,x) \bigr)
    \end{equation*}
    for some function $u \colon (0,\infty) \times \mathbb{R}^{k-1} \to \mathbb{R}^{n-k}$ such that $u(\varrho,x) = O(\varrho^{k+1})$.
    Write the components of $u$ as $u^{\alpha'}(\varrho,x) = \varrho^{k+1}f^{\alpha'}(x) + O(\varrho^{k+2})$.
    Direct computation yields
    \begin{align*}
        L(\varrho\partial_0,\varrho\partial_0,\varrho\partial_{\gamma'}) & = (k^2-1)\varrho^{k}f^{\gamma'} + O(\varrho^{k+1}) , \\
        L(\varrho\partial_0,\varrho\partial_{\alpha},\varrho\partial_{\gamma'}) & = O(\varrho^{k+1}) , \\
        L(\varrho\partial_{\alpha},\varrho\partial_{\beta},\varrho\partial_{\gamma'}) & = -(k+1)\varrho^{k}f^{\gamma'}\delta_{\alpha\beta} + O(\varrho^{k+1}) .
    \end{align*}
    Therefore $\lvert L \rvert_{g_+}^2 = k(k-1)(k+1)^2\varrho^{2k}\lvert f^{\alpha'}\rvert^2 + O(\varrho^{2k+1})$.
\end{proof}

We now prove our rigidity result:

\begin{proof}[Proof of Theorem~\ref{thm:rigidity}]
    Lemma~\ref{lem:second-fundamental-form-asymptotics} implies $\lvert L \rvert \in L^p(\Sigma)$ for all $p \in [1,\infty]$.
    Theorem~\ref{thm:straight} then implies that
    \begin{align}
        \label{eqn:renorm-L2ell} \int_\Sigma \iota^\ast\left( (-\cDelta)^{k/2-\ell}\lvert\widetilde{L}\rvert^{2\ell} \right) \darea & = 2^{k/2-\ell}\frac{(k/2-1)!(k-2\ell-1)!!}{(\ell-1)!} \int_\Sigma \lv L \rv^{2\ell} \darea , \\
        \label{eqn:renorm-L2-2} \int_\Sigma \iota^\ast\left( (-\cDelta)^{k/2-2}\lvert\widetilde{L}^2\rvert^2 \right) \darea & = 2^{k/2-2}(k/2-1)!(k-5)!! \int_\Sigma \lv L^2 \rv^2 \darea .
    \end{align}

    We now deduce our inequalities.

    Inequality~\eqref{eqn:totally-geodesic} and the characterization of its equality case follows immediately from Equation~\eqref{eqn:renorm-L2ell}.

    The contracted Gauss equation~\eqref{eqn:gauss} yields
    \begin{equation}
        \label{eqn:tf-ric-pw}\overline{E}_{\alpha\beta} = -L^2_{\alpha\beta} + \frac{1}{k}\lv L \rv^2g_{\alpha\beta} ,
    \end{equation}
    where $\overline{E}_{\alpha\beta} := \overline{R}_{\alpha\beta} - \frac{1}{k}\overline{R}g_{\alpha\beta}$ is the trace-free part of the Ricci tensor of $j^\ast g_+$.
    Therefore
    \begin{equation*}
        \lvert\overline{E}\rvert^2 = \lvert L^2\rvert^2 - \frac{1}{k}\lvert L \rvert^4 .
    \end{equation*}
    Combining this with Equations~\eqref{eqn:renorm-L2ell} and~\eqref{eqn:renorm-L2-2} yields Inequality~\eqref{eqn:einstein} with equality if and only if $(\Sigma,j^\ast g_+)$ is Einstein.
    We now show that $(\Sigma,j^\ast g_+)$ is Einstein if and only if $j$ is totally geodesic.
    Equation~\eqref{eqn:tf-ric-pw} implies that if $j$ is totally geodesic, then $j^\ast g_+$ is Einstein.
    Conversely, if $j^\ast g_+$ is Einstein, then it has constant scalar curvature.
    Contracting the Gauss equation~\eqref{eqn:gauss} twice implies that $\lvert L \rvert^2$ is constant.
    Since $\lvert L \rvert = O(\varrho^{k})$, we conclude that $j$ is totally geodesic.

    Finally, the Gauss equation~\eqref{Gausseun} yields
    \begin{equation*}
        \frac{k-2}{2}\lvert\overline{W}\rvert^2 = -k\lvert L^2\rvert^2 + \frac{k^2-3k+3}{k-1}\lvert L \rvert^4
    \end{equation*}
    in codimension one~\cite{CaseTyrrell2023}*{Section~3}.
    Combining this with Equations~\eqref{eqn:renorm-L2ell} and~\eqref{eqn:renorm-L2-2} yields Inequality~\eqref{eqn:lcf} and its characterization of equality.
\end{proof}

\section*{Acknowledgements}
This project is a part of the AIM SQuaRE ``Global invariants of Poincar\'e--Einstein manifolds and applications''.
We thank the American Institute for Mathematics for their support.

JSC was partially supported by a Simons Foundation Collaboration Grant for Mathematicians and by the National Science Foundation under Award No.\ DMS-2505606. YJL was partially supported by the National Science Foundation-LEAPS grant under Award No.\ DMS-2418740. WY was partially supported by NSFC (Grant No.12571065)

\bibliographystyle{plainnat}
\bibliography{bib}

\end{document}